\def\Aut{\operatorname{Aut}}
\def\newspan{\operatorname{span}}
\def\clsp{\overline{\operatorname{span}}}
\def\Tr{\operatorname{Tr}}
\def\C{\mathbb{C}}
\def\R{\mathbb{R}}
\def\N{\mathbb{N}}
\def\Z{\mathbb{Z}}
\def\T{\mathbb{T}}
\def\Q{\mathbb{Q}}
\def\FF{\mathcal{F}}
\def\HH{\mathcal{H}}
\def\KK{\mathcal{K}}
\def\LL{\mathcal{L}}
\def\NN{\mathcal{N}}
\def\OO{\mathcal{O}}
\def\SS{\mathcal{S}}
\def\TT{\mathcal{T}}
\def\UU{\mathcal{U}}
\newcommand{\dashind}{\operatorname{\!-Ind}}
\renewcommand{\mid}{\,\colon\,}
\def\d{|X|}
\newtheorem{thm}{Theorem}[section]
\newtheorem{cor}[thm]{Corollary}
\newtheorem{lemma}[thm]{Lemma}
\newtheorem{prop}[thm]{Proposition}
\theoremstyle{definition}
\theoremstyle{remark}
\newtheorem{remark}[thm]{Remark}
\newtheorem{example}[thm]{Example}
\numberwithin{equation}{section}
\tikzstyle{vertex}=[circle]
\tikzstyle{goto}=[->,shorten >=1pt,>=stealth,semithick]
\begin{document}

\date{\today}
\title[Equilibrium states and self-similar groups]{Equilibrium states on the Cuntz-Pimsner algebras\\ of self-similar actions}
\author{Marcelo Laca}
\address{Marcelo Laca, Department of Mathematics and Statistics\\
University of Victoria\\
Victoria, BC V8W 3P4\\
Canada}
\email{laca@math.uvic.ca}
\author[Iain Raeburn]{Iain Raeburn}
\address{Iain Raeburn, Department of Mathematics and Statistics, University of Otago, PO Box 56, Dunedin 9054, New Zealand}
\email{iraeburn@maths.otago.ac.nz }
\author[Jacqui Ramagge]{Jacqui Ramagge}
\address{Jacqui Ramagge, School of Mathematics and Applied Statistics, University of Wollongong, NSW 2522, Australia}
\email{ramagge@uow.edu.au}
\author[Michael F. Whittaker]{Michael F. Whittaker}
\address{Michael F. Whittaker, School of Mathematics and Applied Statistics, University of Wollongong, NSW 2522, Australia}
\email{mfwhittaker@gmail.com}

\thanks{This research was supported by the Natural Sciences and Engineering Research Council of Canada, the Marsden Fund of the Royal Society of New Zealand, and the Australian Research Council.}

\begin{abstract}

We consider a family of Cuntz-Pimsner algebras associated to self-similar group actions, and their Toeplitz analogues. Both families carry natural dynamics implemented by automorphic actions of the real line, and we investigate the equilibrium states (the KMS states) for these dynamical systems.

We find that for all inverse temperatures above a critical value, the KMS states on the Toeplitz algebra are given, in a very concrete way, by traces on the full group algebra of the group. At the critical inverse temperature, the KMS states factor through states of the Cuntz-Pimsner algebra; if the self-similar group is contracting, then the Cuntz-Pimsner algebra has only one KMS state. We apply these results to a number of examples, including the self-similar group actions associated to integer dilation matrices, and the canonical self-similar actions of the basilica group and the Grigorchuk group.
\end{abstract}

\maketitle

\section{Introduction}
We study operator-algebraic dynamical systems consisting of an action $\sigma$ of the real line $\R$ on a $C^*$-algebra $B$. Such systems have been used to model time evolution in physics, and there the states are positive functionals on $B$. In models from statistical mechanics, the equilibrium states are time-invariant states which satisfy a commutation relation called the KMS$_\beta$ condition,  where $\beta$ is a real parameter called the inverse temperature \cite{bra-rob}. However, the KMS condition is purely $C^*$-algebraic, and there is a great deal of evidence that the KMS states can be very interesting even when the system $(B,\sigma)$ is not physical. A famous example is the number-theoretic system studied by Bost and Connes \cite{bc}, which exhibits a phase transition like that of a freezing liquid. Their work generated enormous interest in the computation of KMS states for systems of purely mathematical origin (see, for example, \cite{hl, laca98,lr,ln2,cdl}).

In \cite{lrr}, we analysed the KMS states on a family of Exel crossed products associated to self-coverings of the torus $\T^d$. For an integer matrix $A$, the covering map $e^{2\pi i x}\mapsto e^{2\pi iAx}$ induces an endomorphism $\alpha_A$ of $C(\T^d)$ for which there is a natural transfer operator $L$; the Exel crossed product is then, almost by definition \cite{e1,br}, the Cuntz-Pimsner algebra of a Hilbert bimodule $M_L$ over $C(\T^d)$ defined using $\alpha_A$ and $L$. Both the Cuntz-Pimsner algebra $\OO(M_L)$ and the Toeplitz algebra $\TT(M_L)$ carry natural actions $\sigma$ of $\R$. We showed in \cite{lrr} that the system $(\TT(M_L),\sigma)$ has no KMS states for $\beta$ less than a critical value $\beta_c:=\log|\det A|$, and a large simplex of KMS$_\beta$ states for $\beta$ greater than $\beta_c$; when $A$ is a dilation matrix, there is only one KMS state with inverse temperature $\beta_c$, and this state factors through a state of $(\OO(M_L),\sigma)$.

Our analysis in \cite{lrr} exploited the existence of an orthonormal basis for the right Hilbert module $M_L$ \cite{pr,LR2}, which gives a Cuntz family of isometries $\{s_i\}$ in $\OO(M_L)$. The canonical embedding of $C(\T^d)$ gives a unitary representation $u$ of $\Z^d$ in $\OO(M_L)$, and Proposition~3.3 of \cite{lrr} describes a presentation of $\OO(M_L)$ in terms of the $u_n$ and $s_i$. Our present project started when we noticed that Nekrashevych had defined ``Cuntz-Pimsner algebras'' for self-similar groups by specifying a similar presentation \cite{nek_jot, nekra}. In this paper we extend the analysis in \cite{lrr} to cover quite general self-similar groups, with uniqueness at the critical inverse temperature for a class of self-similar actions that includes the contracting ones. 

A self-similar group consists of a group $G$, a finite set $X$, and an action of $G$ on the set $X^*$ of finite words in the alphabet $X$ for which there is a map $(g,x)\mapsto g|_x$ satisfying $g\cdot(xw)=(g\cdot x)(g|_x\cdot w)$ for $w\in X^*$ (see \S\ref{sec:SSA}). Each integer matrix $A$ gives a self-similar group $(\Z^d,\Sigma)$ in which $\Sigma$ is a set of coset representatives for $\Z^d/A^t\Z^d$ (see \S\ref{dilation_example}), but there are many more: indeed, self-similar groups have been a fertile source of interesting examples for infinite group theory (see \cite{nek_book}, for example). 

For each self-similar group $(G,X)$, we construct a Hilbert bimodule $M$ over the group $C^*$-algebra $C^*(G)$ such that the right module has an orthonormal basis $\{e_x:x\in X\}$ and the left action of $C^*(G)=\clsp\{\delta_g\}$ satisfies $\delta_g\cdot e_x=e_{g\cdot x}\cdot\delta_{g|_x}$. This bimodule has  a Toeplitz algebra $\TT(M)$ and a Cuntz-Pimsner algebra $\OO(M)$, and both carry canonical actions $\sigma$ of $\R$. The Cuntz-Pimsner algebra is the same as that of Nekrashevych \cite{nekra}, but the Toeplitz algebra appears to be new. As previous studies in this general area have consistently showed \cite{el,ln,lr,lrr,hlrs}, the Toeplitz system $(\TT(M),\sigma)$ has a much richer supply of KMS states.

As in \cite{lrr}, there is a critical inverse temperature $\beta_c:=\log|X|$ such that $(\TT(M),\sigma)$ has no KMS states for $\beta$ less than $\beta_c$. For $\beta$ larger than $\beta_c$, we show that the KMS$_\beta$ states of $(\TT(M),\sigma)$ are parametrised by the normalised traces on $C^*(G)$, and we give a formula for the values of these states on a set of elements which span a dense subalgebra of $\TT(M)$ (Theorems~\ref{Conditioning} and~\ref{repn_hilbert}). When the restrictions $g|_v$ of each fixed $g$ form a finite set (see \S\ref{sec:SSA} for details), there is a unique KMS$_{\beta_c}$ state on $(\TT(M),\sigma)$, and it is the only KMS state of $(\TT(M),\sigma)$ which factors through a state of $(\OO(M),\sigma)$ (Theorem~\ref{KMSatcritical}). We do not have an explicit formula for the values of this last state, but we describe a combinatorial procedure for computing its value on a particular generator, and illustrate this procedure in some examples (see \S\ref{sec:Mooreuseful}).

Since we suspect that many  operator algebraists are not familiar  with self-similar group actions, we begin in \S\ref{sec:SSA} with a review of their basic properties. We then discuss some key examples, including odometers, actions of $\Z^d$  associated to integer matrices, and two nonabelian groups called the basilica group and the Grigorchuk group. We then construct our Hilbert bimodule $M$ over $C^*(G)$, and describe presentations of the Toeplitz algebra $\TT(M)$ (Proposition~\ref{Toeplitz}) and the Cuntz-Pimsner algebra $\OO(M)$ (Corollary~\ref{CPquotient}). 

Our computation of KMS states for $\beta>\beta_c$ follows the general program developed in \cite{lr}, \cite{lrr} and \cite{hlrs}. We first find an easily verified relation which allows us to recognise KMS states (Proposition~\ref{KMS>beta}). We then prove existence of KMS states using representation-theoretic methods (Theorem~\ref{repn_hilbert}). As in \cite{lrr}, our construction uses induced representations, but in the setting of self-similar groups, we can use the bimodule $M$ and ideas from \cite{ln} involving Rieffel induction to get a more systematic approach. We prove surjectivity of our parametrisation in \S\ref{parameterise}, by showing  that KMS states are characterised by their conditioning to a small corner in $\TT(M)$. In \S\ref{KMS_CP_alg}, we discuss KMS states on the Cuntz-Pimsner algebra, and then we close with a section on examples.

\section{Self-similar actions}\label{sec:SSA}

If $X$ is a set, we write $X^n$ for the set of words of length $n$ in~$X$, with $X^0=\{\varnothing\}$, and $X^*:=\bigcup_{n=0}^\infty X^n$. A \emph{self-similar action} $(G,X)$ consists of a finite set $X$ and a faithful action of a group $G$ on $X^*$ such that, for all $g\in G$ and $x\in X$, there exist unique $y\in X$ and $h\in G$ such that
\begin{equation}\label{self-similarity}
g\cdot (xw) = y (h\cdot w) \quad \text{ for all } w\in X^*.
\end{equation}
We also assume that $g\cdot\varnothing=\varnothing$, and then taking $w=\varnothing$ shows that $y=g\cdot x$. We call $h$ the {\em restriction} of $g$ to $x$ and denote it by $g|_x$. Thus \eqref{self-similarity} becomes
\[
g \cdot (xw) = (g \cdot x)(g|_x \cdot w) \quad \text{for all } w \in X^*. 
\] 
Then for $g \in G$ and $w=w_1w_2\cdots w_n$ in $X^n$, we have
\[
g\cdot w= (g \cdot w_1) (g|_{w_1} \cdot (w_2 \cdots w_n)) 
= \cdots
= (g\cdot w_1)(g|_{w_1} \cdot w_2) \cdots (g|_{w_1}|_{w_2\cdots}|_{w_{n-1}} \cdot w_n),
\]
and in particular $g\cdot w\in X^n$.

\begin{lemma}[{\cite[\S1.3]{nek_book}}] \label{lem:props}
Suppose $(G,X)$ is a self-similar action.
\begin{enumerate}
\item For each $(g,v)\in G\times X^n$, there exists $g|_v\in G$ satisfying
\begin{equation}\label{self-similarity2}
g\cdot (vw)=(g\cdot v) (g|_v\cdot w) \quad \text{ for all } w\in X^*.
\end{equation}
\item For  $g, h\in G$ and $v,w\in X^*$, we have
\[
g|_{vw}=(g|_{v})|_{w}, \quad
gh|_{v} = g|_{h\cdot v} h|_{v}, \quad\text{and}\quad
g|_{v}^{-1} = g^{-1}|_{g\cdot v}.
\]
\item For every $g \in G$, the map $g:X^n \to X^n$ is bijective.
\end{enumerate}
\end{lemma}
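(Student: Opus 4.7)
The plan is to prove the three items in order, with induction on word length as the main engine and uniqueness in the defining self-similarity relation \eqref{self-similarity} as the main tool.

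For (1), I would induct on $n$, with the $n=1$ case being exactly the hypothesis. For the inductive step, write $v=xv'$ with $x\in X$ and $v'\in X^{n-1}$. For any $w\in X^*$, compute
\[
g\cdot (vw) = g\cdot (x(v'w)) = (g\cdot x)(g|_x \cdot (v'w)) = (g\cdot x)\bigl((g|_x\cdot v')((g|_x)|_{v'}\cdot w)\bigr),
\]
using the case $n=1$ and then the inductive hypothesis applied to $g|_x$ and the word $v'\in X^{n-1}$. Taking $w=\varnothing$ gives $g\cdot v = (g\cdot x)(g|_x\cdot v')$, which is a word in $X^n$ (so the last assertion of the paragraph preceding the lemma is also reproved along the way). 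Setting $g|_v := (g|_x)|_{v'}$ makes the displayed equation read $g\cdot(vw)=(g\cdot v)(g|_v\cdot w)$, establishing \eqref{self-similarity2}. Uniqueness of $g|_v$ follows from the faithfulness of the action: if $h$ also satisfies \eqref{self-similarity2}, then $h\cdot w = g|_v\cdot w$ for all $w\in X^*$, so $h=g|_v$.

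For (2), each identity follows by expanding a product two ways and invoking uniqueness from (1). For $g|_{vw}=(g|_v)|_w$, compute $g\cdot (vwu)$ for arbitrary $u\in X^*$ once as $(g\cdot(vw))(g|_{vw}\cdot u)$ and once as $(g\cdot v)(g|_v\cdot (wu))=(g\cdot v)(g|_v\cdot w)((g|_v)|_w\cdot u)$; since the prefixes of length $|v|+|w|$ agree, comparing the tails and using faithfulness gives the claim. For $gh|_v=g|_{h\cdot v}h|_v$, expand $(gh)\cdot (vw)$ as $g\cdot(h\cdot(vw))$ via two applications of \eqref{self-similarity2} and then once directly; uniqueness of the restriction yields the identity. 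For $g|_v^{-1}=g^{-1}|_{g\cdot v}$, apply the previous identity to $g^{-1}g=e$, noting that $e|_v=e$ because the action is faithful and $e$ satisfies \eqref{self-similarity2} trivially.

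For (3), I would observe that by (1) we have $g\cdot X^n\subseteq X^n$ and $g^{-1}\cdot X^n\subseteq X^n$, while the group action gives $g^{-1}\cdot(g\cdot w)=w$, so the restricted map is bijective with inverse $w\mapsto g^{-1}\cdot w$. The main obstacle I anticipate is simply being careful in the inductive step of (1) to treat the recursion on both the word $v$ and the trailing variable $w$ cleanly, and to avoid circularity when invoking uniqueness; once that is set up correctly, the identities in (2) and the bijectivity in (3) are short consequences.
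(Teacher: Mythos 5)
Your proof is correct. The paper itself gives no proof of this lemma, simply citing \S 1.3 of Nekrashevych's book, and your argument is the standard one found there: induction on $|v|$ for the existence of $g|_v$ (with uniqueness supplied by faithfulness of the action on $X^*$), the identities in (2) obtained by expanding $g\cdot(vwu)$ or $(gh)\cdot(vw)$ in two ways and invoking that uniqueness (including the observation $e|_v=e$ needed for the inverse formula), and bijectivity in (3) from the fact that $g^{-1}$ furnishes a two-sided inverse on $X^n$. All steps check out; no gaps.
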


Suppose that $(G,X)$ is a self-similar action. We can view $X^*$ as the vertices of a rooted tree $T_X$ with root $\varnothing$ and edges from $w\to wx$, and then \eqref{self-similarity2} implies that $G$ acts on $T_X$ by graph automorphisms. Indeed, since the action is faithful, the action gives an embedding of $G$ in the automorphism group $\Aut T_X$. Many of the important examples are constructed by specifying $X$ and the subgroup of  $\Aut T_X$.

A self-similar action $(G,X)$ is \emph{finite-state} if for every $g\in G\setminus\{e\}$, the set $\{g|_v:v\in X^*\}$ is finite \cite[page~11]{nek_book}.
As in \cite[\S2.11]{nek_book}, $(G,X)$ is {\em contracting} if there is a finite subset $S$ of $G$ such that for every $g\in G$ there exists $n$ with $\{ g|_{v}: v \in X^*, |v| \geq n\}\subset S$; the smallest such set
\begin{equation}\label{defnuc} 
\NN:= \bigcup_{g \in G} \bigcap_{n=0}^\infty \{ g|_{v}: v \in X^*, |v| \geq n\} 
\end{equation}
is then called the \emph{nucleus} of $(G,X)$. 

Suppose that $(G,X)$ is a self-similar action and $S$ is a subset of $G$ that is closed under restriction. The {\em Moore diagram} of $S$ is the labelled directed graph with vertex set $E^0=S$ and a directed edge from $g$ to $g|_x$ labelled $(x,g \cdot x)$ for each $x \in X$. So an edge
\begin{center}
\begin{tikzpicture}
\node[vertex] (vertexa) at (-2,0)   {$g$};	
\node[vertex] (vertexb) at (0,0)  {$h$}
	edge [<-,>=latex,out=180,in=0,thick] node[auto,swap,pos=0.5]{$\scriptstyle(x,y)$}(vertexa);
\end{tikzpicture}
\end{center}
in the Moore diagram encodes the self-similar relation $g\cdot(xw)=y(h\cdot w)$.

We are particularly interested in the Moore diagram of the nucleus, and will use Moore diagrams to help find the nucleus. Later, we will use larger Moore diagrams to compute values of KMS states.

\begin{prop}\label{Moore_nucleus}
Suppose $(G,X)$ is a self-similar action and $S$ is a subset of $G$ that is closed under restriction. Every vertex in the Moore diagram of $S$ that can be reached from a cycle belongs to the nucleus.
\end{prop}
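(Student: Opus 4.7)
The plan is to exploit the composition identity $g|_{vw}=(g|_v)|_w$ from Lemma~\ref{lem:props}(2), which translates concatenation of paths in the Moore diagram into concatenation of words in $X^*$, and correspondingly into composition of restrictions.

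First I would unpack the hypothesis. If $h$ is reachable from a cycle, there is a cycle $g_0\to g_1\to\cdots\to g_k=g_0$ in the Moore diagram of $S$ together with a (possibly empty) path from some vertex of this cycle to $h$. By cyclically rotating the starting point of the cycle, I may assume the path departs from $g_0$, say with first-coordinate edge labels $y_1,\ldots,y_m\in X$, so $h$ is the endpoint of successive restrictions by $y_1,\ldots,y_m$. Let $x_1,\ldots,x_k\in X$ be the first coordinates of the edge labels around the cycle, so that $g_{j-1}|_{x_j}=g_j$ for each $j=1,\ldots,k$. Iterating Lemma~\ref{lem:props}(2) then yields
\[
g_0|_{x_1 x_2\cdots x_k}=g_k=g_0 \quad\text{and}\quad g_0|_{y_1\cdots y_m}=h.
\]

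Writing $w:=x_1\cdots x_k$ and $u:=y_1\cdots y_m$, the first identity together with an easy induction on $n$ gives $g_0|_{w^n}=g_0$ for every $n\geq 0$, and hence
\[
g_0|_{w^n u}=(g_0|_{w^n})|_u = g_0|_u = h \quad\text{for all } n\geq 0.
\]
Since $|w^n u|=nk+m\to\infty$ as $n\to\infty$, the element $h$ lies in $\{g_0|_v: v\in X^*,\ |v|\geq N\}$ for arbitrarily large $N$. Intersecting over $N$ and then unioning over $g\in G$ places $h$ in the set $\NN$ defined in \eqref{defnuc}.

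I do not expect a real obstacle here: the argument is essentially bookkeeping once Lemma~\ref{lem:props}(2) is available. The only care needed is in arranging the path to start at the closing vertex $g_0$ of the cycle, which is automatic after cyclically rotating the cycle, and the corner case $m=0$ (when $h$ itself lies on the cycle) is handled directly by $g_0|_{w^n}=g_0=h$.
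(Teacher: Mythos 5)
Your proof is correct and follows the same strategy as the paper's: use the composition rule $g|_{vw}=(g|_v)|_w$ to get $g_0|_{w}=g_0$ around the cycle, hence $g_0|_{w^n u}=h$ for all $n$, which puts $h$ in the set \eqref{defnuc}. In fact you spell out the "reached from a cycle" case in more detail than the paper, which only treats vertices lying on a cycle explicitly and dismisses the general case as "a similar argument".
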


\begin{proof}
Suppose $g \in G$ is a vertex in the Moore diagram of $S$, and there is a cycle of length $n\geq 1$ consisting of edges labelled $(x_1,y_1), (x_2,y_2), \cdots, (x_n,y_n)$ with $s(x_1,y_1)=g$, $r(x_i,y_i)=s(x_{i+1},y_{i+1})$, and $r(x_n,y_n)=g$. By definition of the Moore diagram we have $g \cdot (x_1 \cdots x_n) = y_1 \cdots y_n$ and $g|_{x_1 \cdots x_n} = g$. Thus $g = g|_{(x_1 \cdots x_n)^m}$ for all $m \in \N$ and
\[
g \in \bigcap_{n \geq 0} \{ g|_{v} :v \in X^*, |v| \geq n\} \implies g \in \bigcup_{h \in G} \bigcap_{n \geq 0} \{ h|_{v} : v \in X^*, |v| \geq n\} = \NN.
\]
A similar argument shows that if $g$ can be reached from a cycle, then there are arbitrarily long paths ending at $g$.
\end{proof}

In the rest of this section, we discuss some key examples of self-similar actions.
\begin{figure}
\begin{tikzpicture}
\node at (0,0) {$\scriptstyle e$};
\node[vertex] (vertexe) at (0,0)   {$\quad$}
	edge [->,>=latex,out=130,in=100,loop,thick] node[auto,pos=0.56]{$\scriptstyle (0,0)$} (vertexe)
	edge [->,>=latex,out=50,in=80,loop,thick] node[auto,swap,pos=0.56]{$\scriptstyle (1,1)$} (vertexe)
	edge [->,>=latex,out=230,in=260,loop,thick] node[auto,swap,pos=0.56]{$\scriptstyle (2,2)$} (vertexe)
	edge [->,>=latex,out=310,in=280,loop,thick] node[auto,pos=0.56]{$\scriptstyle (3,3)$} (vertexe);
\node at (3,0) {$\scriptstyle g$};
\node[vertex] (vertexa) at (3,0)  {$\quad$}
	edge [->,>=latex,out=150,in=30,thick] node[auto,swap,pos=0.5]{$\scriptstyle(0,1)$} (vertexe)
	edge [->,>=latex,out=180,in=0,thick] node[auto,swap,yshift=-0.1cm,pos=0.5]{$\scriptstyle(1,2)$} (vertexe)
	edge [->,>=latex,out=210,in=330,thick] node[auto,pos=0.5]{$\scriptstyle(2,3)$} (vertexe)
	edge [->,>=latex,out=40,in=320,loop,thick] node[auto,pos=0.5]{$\scriptstyle (3,0)$} (vertexg);
\node at (-3,0) {$\scriptstyle {g^{-1}}$};
\node[vertex] (vertex-a) at (-3,0)   {$\quad$}
	edge [->,>=latex,out=30,in=150,thick] node[auto,pos=0.49]{$\scriptstyle(1,0)$} (vertexe)
	edge [->,>=latex,out=0,in=180,thick] node[auto,yshift=-0.1cm,pos=0.5]{$\scriptstyle(2,1)$} (vertexe)
	edge [->,>=latex,out=330,in=210,thick] node[auto,swap,pos=0.5]{$\scriptstyle(3,2)$} (vertexe)
	edge [->,>=latex,out=140,in=220,loop,thick] node[auto,swap,pos=0.5]{$\scriptstyle (0,3)$} (vertex-a);
\end{tikzpicture}
\caption{The Moore diagram for the nucleus of the odometer with $N=4$.}
\label{fig:odometer}
\end{figure}
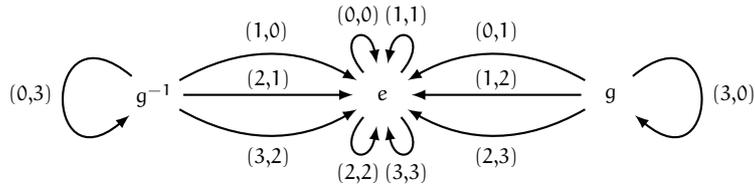
\subsection{Odometers} Fix an integer $N>1$, and let $X_N=\{0,1,\cdots,N-1\}$. We consider the multiplicative free abelian group $G$ with generator $g$, so that $G=\{g^k:k\in \Z\}$. We define an action of $G$ on $X_N^*$ by
\[
g\cdot v=\begin{cases}
(v_1+1)v_2\cdots v_{|v|}&\text{if $v_1<N-1$}\\
0\cdots 0(v_k+1)v_{k+1}\cdots v_{|v|}&\text{if $v_1=\cdots =v_{k-1}=N-1$ and $v_k<N-1$.}
\end{cases}
\]
Then $(G,X_N)$ is a self-similar action with $g|_i=e$ for $i<N-1$ and $g|_{N-1}=g$. This action is contracting with nucleus $\NN=\{e,g,g^{-1}\}$: indeed, if $k>0$ and $|w|>\log_Nk$, then $g^k|_w$ is either $e$ or $g$, and if $k<0$ and $|w|>\log_N|k|$, then $g^k|_w$ is either $e$ or $g^{-1}$. The Moore diagram for $\NN$ for $N=4$ is shown in Figure~\ref{fig:odometer}.

The self-similar action $(G,X_N)$ is called an \emph{odometer}. To see why, identify $X_N^n$ with $\{0,1,\cdots N^n-1\}$ by sending $v$ to $\sum_{i=1}^nv_iN^{i-1}$, and then the action of $g$ on $X_N^n$ adds $1 \pmod{N^n}$.

\subsection{Integer matrices}\label{dilation_example}

Suppose that $A\in M_d(\Z)$ has $N:=|\det A|> 1$, and write $B:=A^t$ for its transpose. We choose a set $\Sigma$ of coset representatives for the quotient $\Z^d/B\Z^d$, and we assume that $0\in \Sigma$. For $n\in \Z^d$, we write $c(n)$ for the representative of $n+B\Z^d$ in $\Sigma$. We note that $\det B=\det A=N$, and hence $\Sigma$ has cardinality $N$.

We now fix an integer $k\geq 1$. Then the set $\Sigma^k$ gives a parametrisation
\[
\big\{b_k(w)=w_1+Bw_2+\cdots B^{k-1}w_k+B^k\Z^d:w\in \Sigma^k\big\}
\]
of $\Z^d/B^k\Z^d$. The following straightforward lemma tells us how the different bijections $b_k$ combine.

\begin{lemma}\label{calcbk+1}
Write $B$ for the injective homomorphism $B:\Z^d/B^k\Z^d\to \Z^d/B^{k+1}\Z^d$ which takes $m+B^k\Z^d$ to $Bm+B^{k+1}\Z^d$. Then for $x\in \Sigma$ and $w\in \Sigma^k$ we have $b_{k+1}(xw)=x+B(b_k(w))$.
\end{lemma}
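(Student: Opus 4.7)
The plan is to verify the identity by a direct unwinding of the definition of $b_{k+1}$, paying attention only to the indexing convention for concatenation. No machinery from the preceding material is required.

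First I would record the indexing: the word $xw \in \Sigma^{k+1}$ has $(xw)_1 = x$ and $(xw)_{i+1} = w_i$ for $1 \le i \le k$. Substituting into the formula that defines $b_{k+1}$ then gives
\[
b_{k+1}(xw) = x + B w_1 + B^2 w_2 + \cdots + B^k w_k + B^{k+1}\Z^d.
\]
Factoring $B$ out of every term except the leading $x$ rewrites this as
\[
x + B\bigl( w_1 + B w_2 + \cdots + B^{k-1} w_k \bigr) + B^{k+1}\Z^d,
\]
and the bracketed expression is a representative of $b_k(w) \in \Z^d/B^k\Z^d$. Applying the homomorphism $B: \Z^d/B^k\Z^d \to \Z^d/B^{k+1}\Z^d$ to this class yields $B(b_k(w)) \in \Z^d/B^{k+1}\Z^d$, so the displayed expression equals $x + B(b_k(w))$, which is what we wanted.

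The only point worth a brief remark is that the map $B$ from $\Z^d/B^k\Z^d$ to $\Z^d/B^{k+1}\Z^d$ is well-defined: if $m - m' \in B^k\Z^d$, then $Bm - Bm' \in B^{k+1}\Z^d$. This is what legitimises the final step above, and it is also injective as asserted in the statement (since $Bm \in B^{k+1}\Z^d$ forces $m \in B^k\Z^d$, using that $B$ is nonsingular over $\Q$ and $B^k \Z^d$ is the preimage of $B^{k+1}\Z^d$ under multiplication by $B$). There is no genuine obstacle; this is a bookkeeping lemma whose only role is to set up the recursion that will later be used when comparing the self-similar action of $\Z^d$ on $\Sigma^*$ with the quotients $\Z^d/B^k\Z^d$.
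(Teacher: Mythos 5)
Your proof is correct and is precisely the direct verification the paper has in mind: the authors call this a ``straightforward lemma'' and omit the proof entirely, and your unwinding of the definition of $b_{k+1}$ together with the remark on well-definedness of the induced map $B$ on quotients is exactly the intended argument.
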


\begin{prop}\label{prop:dilation}
The actions of the additive abelian group $\Z^d$ on its quotients $\Z^d/B^k\Z^d$ combine to give an action of $\Z^d$ on $\Sigma^*$ such that
\begin{equation}\label{defaction}
n\cdot w=b_k^{-1}(n\cdot b_k(w))\quad \text{for $k\geq 1$ and $w\in \Sigma^k$.}
\end{equation}
The pair $(\Z^d,\Sigma)$ is a self-similar action, and for $n\in \Z^d$, $x\in \Sigma$ we have
\begin{equation}\label{idrest}
n\cdot x=c(n+x)\quad\text{and}\quad n|_{x}=B^{-1}(n+x-c(n+x)).
\end{equation}
If $A$ is a dilation matrix (in the sense that all its complex eigenvalues $\lambda$ satisfy $|\lambda|>1$), then $(\Z^d,\Sigma)$ is contracting.
\end{prop}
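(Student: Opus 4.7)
The plan is to pull the translation action of $\Z^d$ on each quotient $\Z^d/B^k\Z^d$ back through the bijections $b_k$ from Lemma~\ref{calcbk+1}, and to extract both the self-similar decomposition and the formulas \eqref{idrest} from a single inductive calculation. The guiding idea is that the restriction $n|_x$ should arise as the ``carry'' obtained when reducing $n+x$ modulo $B\Z^d$: writing $n+x = c(n+x)+B\cdot m$ uniquely with $m:=B^{-1}(n+x-c(n+x))\in\Z^d$ (well-defined since $B$ is nonsingular over $\Q$ and $n+x,c(n+x)$ represent the same coset of $B\Z^d$), I expect $m$ to play the role of $n|_x$.

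For the core computation I would fix $n\in\Z^d$, $x\in\Sigma$, $w\in\Sigma^k$, and apply Lemma~\ref{calcbk+1} twice to obtain, in $\Z^d/B^{k+1}\Z^d$,
\[
n+b_{k+1}(xw) \;=\; n+x+B(b_k(w)) \;=\; c(n+x)+B\bigl(m+b_k(w)\bigr) \;=\; b_{k+1}\bigl((c(n+x))(m\cdot w)\bigr),
\]
where the last step uses the inductively defined action $b_k(m\cdot w)=m+b_k(w)$ on $\Sigma^k$. Applying $b_{k+1}^{-1}$ then gives $n\cdot(xw)=(c(n+x))(m\cdot w)$, which simultaneously establishes the compatibility of the actions across levels, the self-similar relation $n\cdot(xw)=(n\cdot x)(n|_x\cdot w)$, and both formulas of \eqref{idrest}. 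The general restriction $n|_v$ for $v\in\Sigma^k$ then follows from Lemma~\ref{lem:props} by iteration, and $n\mapsto (n\cdot)$ is a group homomorphism because each $b_k$ intertwines the genuine translation action of $\Z^d$ on $\Z^d/B^k\Z^d$; faithfulness reduces to $\bigcap_k B^k\Z^d=\{0\}$, which is automatic when $A$ is a dilation matrix since $\|B^{-k}\|\to 0$ forces any common element to vanish.

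For the contracting claim I would use that $\rho(B^{-1})<1$ under the dilation hypothesis, choose a norm on $\R^d$ with $r:=\|B^{-1}\|<1$, and set $D:=\max_{s\in\Sigma}\|s\|$. Then $n|_x = B^{-1}(n+x-c(n+x))$ immediately gives $\|n|_x\|\leq r(\|n\|+2D)$, and iterating along $v=v_1\cdots v_k$ yields $\|n|_v\|\leq r^k\|n\|+\tfrac{2Dr}{1-r}$; for each fixed $n$, once $k$ is large enough that $r^k\|n\|\leq 1$, all restrictions $n|_v$ with $|v|\geq k$ lie in the finite set $S:=\{m\in\Z^d:\|m\|\leq 1+\tfrac{2Dr}{1-r}\}$, proving the contracting property (with $\NN\subseteq S$ as a byproduct). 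The main delicate point throughout is the bookkeeping between $\Z^d$ and its quotients in the self-similarity step --- keeping track of which equalities live in which quotient, and relying on the injectivity of $B:\Z^d/B^k\Z^d\to\Z^d/B^{k+1}\Z^d$ from Lemma~\ref{calcbk+1} to transport $b_k(m\cdot w)=m+b_k(w)$ cleanly to the next level --- while the contracting estimate itself is a standard spectral-radius argument once \eqref{idrest} is in hand.
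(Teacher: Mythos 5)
Your proposal is correct, and the first half --- establishing the action, the self-similarity relation $n\cdot(xw)=(c(n+x))\bigl(B^{-1}(n+x-c(n+x))\cdot w\bigr)$, and hence \eqref{idrest} --- is essentially the paper's own argument: the paper expands $b_{k+1}(n\cdot(xw))$ as the full sum $n+x+Bw_1+\cdots+B^kw_k+B^{k+1}\Z^d$, inserts $c(n+x)$, factors out $B$, and invokes Lemma~\ref{calcbk+1}, which is exactly your ``carry'' computation written out termwise. Where you genuinely diverge is the contracting claim. The paper identifies the virtual endomorphism $\phi_x\colon n\mapsto n|_x=B^{-1}n$ on the stabiliser $B\Z^d$, observes that $\Q\otimes\phi$ has matrix $B^{-1}$ with spectral radius less than $1$, and cites Nekrashevych's Theorem~2.12.1 to conclude. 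You instead choose an adapted norm with $\|B^{-1}\|=r<1$ and run the elementary estimate $\|n|_v\|\leq r^{|v|}\|n\|+\tfrac{2Dr}{1-r}$, which proves the contracting property from scratch and, as a bonus, gives an explicit finite set containing the nucleus --- at the modest cost of invoking the standard fact that $\rho(B^{-1})<1$ admits such a norm. Your approach is more self-contained; the paper's is shorter and places the example inside Nekrashevych's general machinery. One further remark in your favour: you at least flag faithfulness (required by the paper's definition of a self-similar action) and reduce it to $\bigcap_k B^k\Z^d=\{0\}$, a point the paper's proof passes over in silence; note, though, that this intersection can be nontrivial for a general integer matrix with $|\det A|>1$ (e.g.\ $A=\operatorname{diag}(2,1)$), so your restriction of the faithfulness argument to the dilation case is not merely a convenience.
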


\begin{proof}
Since 
\begin{align*}
(m+n)\cdot(p+B^k\Z^d)&=(m+n)+p+B^k\Z^d=m\cdot(n+p+B^k\Z^d)\\
&=m\cdot(n\cdot(p+B^k\Z^d)),
\end{align*}
the formula \eqref{defaction} gives an action of the additive group $\Z^d$ on $\Sigma^k$. To establish \eqref{idrest}, we take $x\in \Sigma$, $w\in\Sigma^k$, and compute:
\begin{align*}
b_{k+1}(n\cdot(xw))&=n+x+Bw_1\cdots +B^kw_k+B^{k+1}\Z^d\\
&=c(n+x)+(n+x-c(n+x))+Bw_1\cdots +B^kw_k+B^{k+1}\Z^d\\
&=c(n+x)+B\big(B^{-1}(n+x-c(n+x))+w_1\cdots +B^{k-1}w_k+B^{k}\Z^d),\\
&=c(n+x)+B\big(b_k(B^{-1}(n+x-c(n+x))\cdot w)\big),
\end{align*}
which by Lemma~\ref{calcbk+1} is $b_{k+1}\big(c(n+x)(B^{-1}(n+x-c(n+x))\cdot w)\big)$. Thus 
\[
n\cdot(xw)=c(n+x)(B^{-1}(n+x-c(n+x))\cdot w),
\]
which implies that $(\Z^d,\Sigma)$ is self-similar, and gives \eqref{idrest}.

Now we suppose that $A$ is a dilation matrix. For $x\in \Sigma$, the virtual endomorphism $\phi_x$ associated to $x\in X$ (as in \cite[\S2.5]{nek_book}) is the map $n\mapsto n|_x$ from the stabiliser of $x$ into $\Z^d$; the stabiliser is $B\Z^d$, and for $n\in B\Z^d$, $\phi_x(n)=B^{-1}n$. Thus the linear transformation $\Q\otimes \phi:\Q^d\to \Q^d$ considered in \cite[Theorem~2.12.1]{nek_book} has matrix $B^{-1}$. Since $\det B=\det A\not=0$, the eigenvalues of $B^{-1}$ are the inverses $\lambda^{-1}$ of the eigenvalues of $B$. Since $A$ is a dilation matrix, we have $|\lambda^{-1}|<1$ for all such $\lambda$, and $B^{-1}$ has spectral radius $\rho(B^{-1})<1$. Thus \cite[Theorem~2.12.1]{nek_book} implies that $(\Z^d,\Sigma)$ is contracting. 
\end{proof}

\begin{figure}
\begin{tikzpicture}
\node at (0,0) {$\scriptstyle e$};
\node[vertex] (vertexe) at (0,0)   {$\,$}
	edge [->,>=latex,out=310,in=350,loop,thick] node[auto,swap,xshift=-0.1cm,yshift=0.1cm,pos=0.5]{$\scriptstyle (y,y)$} (vertexe)
	edge [->,>=latex,out=230,in=190,loop,thick] node[auto,xshift=0.1cm,yshift=0.1cm,pos=0.5]{$\scriptstyle (x,x)$} (vertexe);
\node at (-1.2856,1.532) {$\scriptstyle b$};
\node[vertex] (vertexb) at (-1.2856,1.532)   {$\,$}
	edge [->,>=latex,out=330,in=140,thick] node[auto,xshift=-0.2cm,pos=0.3]{$\scriptstyle(y,y)$} (vertexe);
\node at (-1.97,0.347) {$\scriptstyle a$};
\node[vertex] (vertexa) at (-1.97,0.347)  {$\,$}
	edge [->,>=latex,out=330,in=160,thick] node[auto,swap,yshift=0.1cm,pos=0.3]{$\scriptstyle(y,x)$} (vertexe)
	edge [->,>=latex,out=100,in=200,thick] node[auto,xshift=0.1cm,pos=0.4]{$\scriptstyle(x,y)$}(vertexb)
	edge [<-,>=latex,out=20,in=280,thick] node[auto,swap,xshift=-0.1cm,pos=0.6]{$\scriptstyle(x,x)$}(vertexb);
\node at (1.2856,1.532) {$\scriptstyle \,\,\, b^{-1}$};
\node[vertex] (vertex-b) at (1.2856,1.532)   {$\,$}
	edge [->,>=latex,out=210,in=40,thick] node[auto,swap,xshift=0.2cm,pos=0.3]{$\scriptstyle(y,y)$} (vertexe);
\node at (1.97,0.347) {$\scriptstyle \,\,\, a^{-1}$};
\node[vertex] (vertex-a) at (1.97,0.347)  {$\,$}
	edge [->,>=latex,out=210,in=20,thick] node[auto,yshift=0.1cm,pos=0.3]{$\scriptstyle(x,y)$} (vertexe)
	edge [->,>=latex,out=80,in=-20,thick] node[auto,swap,xshift=-0.1cm,pos=0.4]{$\scriptstyle(y,x)$}(vertex-b)
	edge [<-,>=latex,out=160,in=260,thick] node[auto,xshift=0.15cm,pos=0.6]{$\scriptstyle(x,x)$}(vertex-b);
\node at (-0.684,-1.879) {$\scriptstyle ab^{-1}$};
\node[vertex] (vertexa-b) at (-0.684,-1.879)   {$\,$}
	edge [->,>=latex,out=90,in=260,thick] node[auto,xshift=0.1cm,pos=0.1]{$\scriptstyle(y,x)$} (vertexe);
\node at (0.684,-1.879) {$\scriptstyle ba^{-1}$};
\node[vertex] (vertexb-a) at (0.684,-1.879)  {$\,$}
	edge [->,>=latex,out=90,in=280,thick] node[auto,xshift=-0.1cm,pos=0.1,swap]{$\scriptstyle(x,y)$} (vertexe)
	edge [->,>=latex,out=140,in=40,thick] node[auto,swap]{$\scriptstyle(y,x)$}(vertexa-b)
	edge [<-,>=latex,out=220,in=320,thick] node[auto]{$\scriptstyle(x,y)$}(vertexa-b);
\end{tikzpicture}
\caption{The Moore diagram for the nucleus of the basilica group.}
\label{fig:Basilica}
\end{figure}
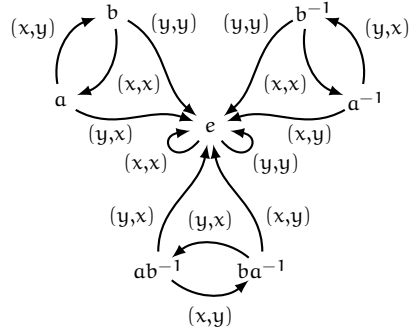
\subsection{The basilica group}\label{Basilica}
Let $X$ be the set $\{x,y\}$ with $|X|=2$, and consider the rooted homogeneous tree $T_X$ with vertex set $X^*$. We recursively define two automorphisms $a$ and $b$ of $T_X$ by
\begin{alignat}{2}\label{basilica_action}
a \cdot(xw)&= y (b\cdot w) & \qquad   a \cdot (yw)&= x w  \\
b \cdot(xw)&= x (a \cdot w)& \qquad   b \cdot (yw)&= y w \notag
\end{alignat}
for $w \in X^*$. Then the \emph{basilica group} $B$ is the subgroup of $\Aut T_X$ generated by $\{a,b\}$. The pair $(B,X)$ is then a self-similar action. 

We now show that the basilica action $(B,X)$ is contracting and compute the nucleus. This is probably well-known, but since our answer seems to contradict an assertion in \cite[page~111]{nek_book}, we give a detailed proof.

\begin{prop}\label{Bascontract}
The basilica group action $(B,X)$ is contracting, with nucleus
\[
\NN = \{e,a,b,a^{-1},b^{-1},ab^{-1},ba^{-1}\}; 
\]
the Moore diagram of $\NN$ is in Figure~\ref{fig:Basilica}.
\end{prop}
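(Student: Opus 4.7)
\emph{Verifying the Moore diagram.} The plan is to establish the proposition in three stages, verifying Figure~\ref{fig:Basilica} and then showing the two inclusions $\NN\subseteq\mathcal{N}$ and $\mathcal{N}\subseteq\NN$, where $\mathcal{N}$ denotes the nucleus of $(B,X)$ as defined in \eqref{defnuc}. First, using the recursion \eqref{basilica_action} together with the formulas $g|_v^{-1}=g^{-1}|_{g\cdot v}$ and $(gh)|_v=g|_{h\cdot v}\,h|_v$ from Lemma~\ref{lem:props}, I compute $g\cdot x$, $g\cdot y$, $g|_x$ and $g|_y$ for each $g\in\NN$. The generators give $a|_x=b$, $a|_y=e$, $b|_x=a$, $b|_y=e$; inverting yields $a^{-1}|_y=b^{-1}$, $a^{-1}|_x=e$, $b^{-1}|_x=a^{-1}$, $b^{-1}|_y=e$; and the product rule gives
\[
(ab^{-1})|_x = a|_{b^{-1}\cdot x}\,b^{-1}|_x = a|_x\cdot a^{-1} = ba^{-1}, \qquad (ab^{-1})|_y = e,
\]
together with the analogous restrictions of $ba^{-1}$. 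Collecting these computations reproduces Figure~\ref{fig:Basilica} and shows $\NN$ is closed under restriction.

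\emph{Showing $\NN\subseteq\mathcal{N}$.} From the Moore diagram, $e$ sits on two self-loops, and the remaining six vertices fall into three $2$-cycles: $a\leftrightarrow b$ (via the edges labelled $(x,y)$ and $(x,x)$), $a^{-1}\leftrightarrow b^{-1}$, and $ab^{-1}\leftrightarrow ba^{-1}$. Hence every vertex of $\NN$ lies on, hence is reached from, a cycle in the Moore diagram of $\NN$, and Proposition~\ref{Moore_nucleus} gives the inclusion.

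\emph{Contraction and the reverse inclusion.} It remains to show that for every $g\in B$ there exists $n$ with $\{g|_v : |v|\ge n\}\subseteq\NN$; this proves that $(B,X)$ is contracting and simultaneously forces the nucleus to lie inside $\NN$. I plan to induct on the reduced word length of $g$ in $\{a^{\pm 1},b^{\pm 1}\}$, which I write as $|g|$. The case $|g|\le 1$ is immediate from the computations above. For the inductive step, write $g=g_1\cdots g_n$ as a reduced word and note that each generator has exactly one first-level restriction equal to $e$; summing letter-by-letter and using that each suffix $g_{i+1}\cdots g_n$ acts bijectively on $X$ gives $|g|_x|+|g|_y|\le n=|g|$. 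When this inequality is strict, induction closes. The length-preserving cases, for instance $g=ab$ with $(ab)|_x=ba$ of the same length as $ab$, are handled by descending to deeper levels: the iterated estimate $\sum_{|v|=k}|g|_v|\le|g|$ combined with a direct calculation on the finite list of length-preserving ``residues'' shows that after boundedly many additional levels every restriction either has strictly smaller length (so induction applies) or already lies in $\NN$. In the example, $(ab)|_{xx}=b$ and $(ab)|_{xy}=a$ both lie in $\NN$.

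\emph{Main obstacle.} The length inequality above is not strict in general, so a naïve one-step induction on $|g|$ does not close; the bulk of the work will be handling the length-preserving restrictions and ruling out an infinite descending chain of them. I expect to do this either by a case analysis of the possible length-preserving patterns in the generators and their interactions through the multiplication in $B$, or — more systematically — by enlarging $\NN$ to a temporary finite superset $S$ that is closed under restriction and contains all residues arising at the ``length-preserving'' levels, and then using a Moore-diagram argument analogous to Proposition~\ref{Moore_nucleus} to show that $S$ retracts onto $\NN$.
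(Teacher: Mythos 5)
Your first two stages are correct and coincide with the paper's argument: the restriction computations reproduce Figure~\ref{fig:Basilica}, and the observation that every vertex of $\NN$ lies on a cycle combined with Proposition~\ref{Moore_nucleus} gives $\NN\subseteq\mathcal{N}$. The problem is your third stage, which is where essentially all of the content of the proposition lives, and which you have not actually carried out. The estimate $\sum_{|v|=k}|g|_v|\le|g|$ is correct but, as you yourself note, too weak: it is consistent with a single infinite branch along which the length of the restrictions never drops (indeed $ab^{-1}\to ba^{-1}\to ab^{-1}\to\cdots$ is exactly such a branch, and a priori there could be others outside your candidate nucleus). Your proposed resolution --- ``a direct calculation on the finite list of length-preserving residues shows that after boundedly many additional levels every restriction either has strictly smaller length or already lies in $\NN$'' --- is precisely the statement that needs to be proved, and you assert it without identifying the list of residues or performing the calculation. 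Likewise your alternative strategy (a finite superset $S$ closed under restriction that ``retracts'' onto $\NN$) presupposes that every $g\in B$ eventually restricts into $S$, which is again the contraction property itself. So the proposal is a correct plan with the decisive step missing, not a proof.

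To see what is required to close the gap: the paper first checks that every reduced product of \emph{two} generators other than $ab^{-1}$ and $ba^{-1}$ restricts into $T\cup\{e\}$ (where $T=\{a,b,a^{-1},b^{-1}\}$) at every word of length $2$ --- this is a table of twenty explicit restrictions, plus the observation that the outputs $b^{-1}a^{-1}$, $ab$, $b^2$, $b^{-2}$ restrict further into $T\cup\{e\}$. It then proves by a separate argument that a reduced product of $n\ge 3$ generators restricts, at every $v\in X^2$, to a product of at most $n-1$ generators: writing $g=g'h$ with $h$ the last two letters, the cocycle identity $g|_v=g'|_{h\cdot v}\,h|_v$ handles all cases except $(h,v)\in\{(ab^{-1},xy),(ba^{-1},yx)\}$, and those exceptional cases are disposed of by pulling a third generator $t$ out of $g'$ and computing $(tab^{-1})|_{xy}$ and $(tba^{-1})|_{yx}$ explicitly for each admissible $t$ (six computations). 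Iterating gives $g|_v\in\NN$ for all $|v|\ge 2(n-1)$, which is the contraction statement and the inclusion $\mathcal{N}\subseteq\NN$ simultaneously. Some version of this case analysis --- in particular, the isolation of $ab^{-1}$ and $ba^{-1}$ as the only obstructions to length decrease and the verification that longer words cannot feed into that $2$-cycle indefinitely --- is unavoidable, and your proposal needs it before it can be called a proof.
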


\begin{proof}
Let $\SS=\{e,a,b,a^{-1},b^{-1},ab^{-1},ba^{-1}\}$, for which we have the Moore diagram in Figure~\ref{fig:Basilica}.
Since this Moore diagram has a cycle at every vertex, Proposition~\ref{Moore_nucleus} implies that $\SS\subset\NN$. So we have to prove $\NN\subset\SS$,

We claim that every $g \in B\setminus\{ab^{-1},ba^{-1}\}$ which can be written as a reduced product of two elements of $T:=\{a,b,a^{-1},b^{-1}\}$ has the property that $g|_v \in T\cup\{e\}$ for every word $v \in X^2$. There are 12 non-trivial length-two words in $G$; we delete $ab^{-1},ba^{-1}$ from the list, and compute the other restrictions to $x$ and $y$. We find that
\begin{alignat*}{4}
a^2|_{x}&=b &\quad a^2|_{y}&=b &\qquad ab|_{x}&=ba &\quad ab|_{y}&=e \\
a^{-2}|_{x}&=b^{-1} &\quad a^{-2}|_{y}&=b^{-1} &\qquad a^{-1}b|_x&=a&\quad a^{-1}b|_y&=b^{-1}\\
a^{-1}b^{-1}|_{x}&=a^{-1}&\quad a^{-1}b^{-1}|_{y}&=b^{-1}&\qquad ba|_{x}&=b &\quad ba|_{y}&=a  \\
b^2|_{x}&=a^2 &\quad b^2|_{y}&=e &\qquad b^{-1}a|_{x}&=b &\quad b^{-1}a|_{y}&=a^{-1} \\
b^{-1}a^{-1}|_{x}&=e  &\quad b^{-1}a^{-1}|_{y}&=a^{-1}b^{-1} &\qquad b^{-2}|_{x}&=a^{-2} &\quad b^{-2}|_{y}&=e.
\end{alignat*}  
Now we observe that further restrictions of $b^{-1}a^{-1}$, $ab$, $b^2$ and $b^{-2}$ are all in $T\cup\{e\}$, and we have justified our claim.

Next we suppose that $n\geq 3$ and that $g$ can be written as a reduced product of $n$ elements of $T$, and take $v\in X^2$. We claim that $g|_v$ can be written as a product of at most $n-1$ elements of $T$. We factor off the last two elements of $T$, say $g=g'h$. Since $g|_v=g'|_{h\cdot v}h|_v$, the claim in the previous paragraph implies that, unless $(h,v)=(ab^{-1},xy)$ or $(ba^{-1},yx)$, we have  $h|_v\in T$. Since $g'|_{h\cdot v}$ is a product of at most $n-2$ elements of $T$, we have $g|_v=g'|_{h\cdot v}h|_v$ written as a product of $n-1$ elements of $T$. So we have to deal with $(h,v)=(ab^{-1},xy)$ and $h=(ba^{-1},yx)$. Now, since $g'$ is a product of $n-2$ elements of $T$ and $n\geq 3$, we can pull one element $t$ out of $g'$, and it suffices for us to prove that $(tab^{-1})|_{xy}$ and $(tba^{-1})|_{yx}$ can be written as products of two elements of $T$. We compute:
\begin{align*}
(a^2b^{-1})|_x&=a|_{(ab^{-1})\cdot x}(ab^{-1})|_x=a|_yba^{-1}=ba^{-1}\\
(b^{-1}ab^{-1})|_x&=b^{-1}|_y(ba^{-1})=ba^{-1}\\
(bab^{-1})|_{x}&=b|_{y}ba^{-1}=ba^{-1}\\
(a^{-1}ba^{-1}|_y&=a^{-1}|_xab^{-1}=ab^{-1}\\
(b^2a^{-1})|_{yx}&=(b|_{x}(ab^{-1}))|_x=(a(ab^{-1}))|_x=a|_yba^{-1}=ba^{-1}\\
(aba^{-1})|_{yx}&=(a|_xab^{-1})|_x=(bab^{-1})|_x=b|_yba^{-1}=ba^{-1}.
\end{align*}
This completes the proof of the claim.

Successive applications of the claim in the previous paragraph show that if $g$ is a product of $n$ elements of $T$ and $n\geq 3$, then $g|_v$ is a product of at most $2$ elements for every $v$ with $|v|\geq 2(n-2)$. Now the calculations in the first paragraph show that a further restriction to a word in $X^2$ gets us into $\SS$. Thus for $|v|\geq 2(n-1)$, we have $g|_v\in\SS$. So the inside intersection in \eqref{defnuc} is contained in $\SS$, and so is $\NN$.
\end{proof}

Our next proposition says that $B$ has a large abelian quotient. We believe this is known, but we do not know where a proof has been published. 

We want to use the presentation of $B$ found by Bartholdi and Vir\'{a}g \cite[Lemma~11]{BV}, building on work of Grigorchuk and \.Zuk \cite{GZ1}. We consider the collection $Y^*$ of all nonempty words in $Y:=\{a,b,a^{-1},b^{-1}\}$, and the transformation $\sigma:Y^*\to Y^*$ which replaces every appearance of $a$ by $bb$, every $a^{-1}$ by $b^{-1}b^{-1}$, every $b$ by $a$, and every $b^{-1}$ by $a^{-1}$. For $c,d\in Y^*$, we write $c^{-1}$ for the word obtained by formally inverting $c$, and $[c,d]$ for the word $c^{-1}d^{-1}cd$. Then \cite[Lemma~11]{BV} says that $B$ has the presentation
\begin{equation}\label{presB}
B=\langle a,b : \sigma^n([a,b^{-1}ab])=e \text{ for all } n \in \N \rangle.
\end{equation}

\begin{prop}\label{quotB}
Let $[B,B]$ be the commutator subgroup of the basilica group $B$, and let $q:B\to B/[B,B]$ be the quotient map. Then there is an isomorphism $\phi$ of $B/[B,B]$ onto $\Z^2$ such that $\phi(q(a))=(1,0)$ and $\phi(q(b))=(0,1)$.
\end{prop}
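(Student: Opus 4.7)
The plan is to read off the abelianization directly from the Bartholdi--Vir\'ag presentation \eqref{presB}. For any two-generator presentation $\langle a,b\mid r_i\rangle$, the abelianization is $\Z^2/M$, where $M\le \Z^2$ is the subgroup generated by the exponent-sum vectors of the relators $r_i$. So it suffices to check that each relator $\sigma^n([a,b^{-1}ab])$ has exponent sum zero in both $a$ and $b$, which will force $M=0$ and give $B/[B,B]\cong \Z^2$ under the obvious identification.

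First I would verify that $\sigma$, which is defined on words in $Y$, descends to a well-defined endomorphism of the free group $F_2=F(a,b)$. This is immediate on inspection: $\sigma(a)\sigma(a^{-1})=bb\,b^{-1}b^{-1}$ freely reduces to $e$, and similarly $\sigma(b)\sigma(b^{-1})=aa^{-1}=e$, so $\sigma$ respects the free-group relations. In particular $\sigma$ is an honest group endomorphism of $F_2$.

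Now I would use the standard fact that any group endomorphism sends commutators to commutators: $\sigma([x,y])=[\sigma(x),\sigma(y)]$. Since $[a,b^{-1}ab]$ is already in the commutator subgroup $[F_2,F_2]$, it follows by induction on $n$ that every $\sigma^n([a,b^{-1}ab])$ lies in $[F_2,F_2]$. Every element of $[F_2,F_2]$ has exponent sum zero in each generator, so the image of each relator in $\Z^2\cong F_2/[F_2,F_2]$ is $(0,0)$. Equivalently, if $N\trianglelefteq F_2$ is the normal closure of the relators, then $N\subseteq [F_2,F_2]$, so $N\cdot[F_2,F_2]=[F_2,F_2]$ and
\[
B/[B,B]\;=\;F_2/(N\cdot[F_2,F_2])\;=\;F_2/[F_2,F_2]\;\cong\;\Z^2,
\]
with $\bar a\mapsto(1,0)$ and $\bar b\mapsto(0,1)$. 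Composing this isomorphism with the quotient map $q$ produces $\phi$ with the stated values, completing the proof.

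Honestly there is no serious obstacle here: the argument is driven entirely by the observation that the relators in the Bartholdi--Vir\'ag presentation all belong to $[F_2,F_2]$. The only point requiring any care is the justification that $\sigma$ passes from a map on words to a group endomorphism of $F_2$, but this is essentially by inspection of how $\sigma$ acts on the pairs $aa^{-1}$ and $bb^{-1}$.
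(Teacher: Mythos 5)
Your proof is correct and rests on the same key point as the paper's: the Bartholdi--Vir\'ag relators $\sigma^n([a,b^{-1}ab])$ all lie in the commutator subgroup of the free group on $\{a,b\}$ (equivalently, have zero exponent sum in each generator), so they become trivial in the abelianization. The only difference is packaging --- the paper uses the presentation \eqref{presB} to build the homomorphism $B\to\Z^2$ and then checks bijectivity by hand, using that $q(a)$ and $q(b)$ commute and generate $B/[B,B]$, whereas you invoke the general identity $(F_2/N)^{\mathrm{ab}}=F_2/(N\cdot[F_2,F_2])$ together with $N\subseteq[F_2,F_2]$; both are fine.
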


\begin{proof}
Since $a':=(1,0)$ and $b':=(0,1)$ commute, we have $\sigma^n([a',(b')^{-1}a'b'])=e$ for all $n$. Thus there is a homomorphism of $B$ into $\Z^2$ taking $\{a,b\}$ to $\{a',b'\}$, and this factors through a homomorphism $\phi:B/[B,B]\to\Z^2$. 

Since $[a,b]$ and $[a,b^{-1}]$ belong to the commutator subgroup, $q(a)$ commutes with $q(b)$ and $q(b^{-1})$, and hence for every $g\in B=\langle a,b\rangle$, there are $k,l\in \Z$ such that $q(g)=q(a^kb^l)$. Since $\phi(q(a^kb^l))=(k,l)$, we deduce both that $\phi$ is surjective and that $\phi$ is injective.
\end{proof}

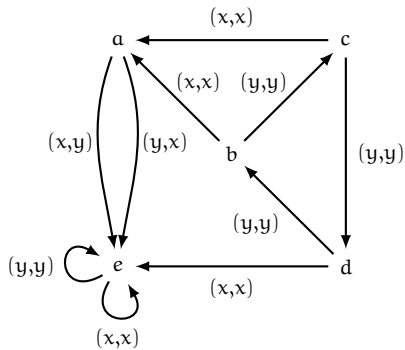
\begin{figure}
\begin{tikzpicture}
\node at (0,0) {$\scriptstyle e$};
\node[vertex] (vertexe) at (0,0)   {$\,$}
	edge [->,>=latex,out=210,in=150,loop,thick] node[auto,pos=0.5]{$\scriptstyle (y,y)$} (vertexe)
	edge [->,>=latex,out=240,in=300,loop,thick] node[auto,swap,pos=0.5]{$\scriptstyle (x,x)$} (vertexe);
\node at (1.5,1.5) {$\scriptstyle b$};
\node[vertex] (vertexb) at (1.5,1.5)   {$\,$};
\node at (0,3) {$\scriptstyle a$};
\node[vertex] (vertexa) at (0,3)  {$\,$}
	edge [->,>=latex,out=250,in=100,thick] node[auto,swap,xshift=0.1cm,pos=0.45]{$\scriptstyle(x,y)$} (vertexe)
	edge [->,>=latex,out=290,in=80,thick] node[auto,xshift=-0.1cm,pos=0.45]{$\scriptstyle(y,x)$}(vertexe)
	edge [<-,>=latex,out=315,in=135,thick] node[auto,xshift=-0.25cm,pos=0.6]{$\scriptstyle(x,x)$} (vertexb);
\node at (3,3) {$\scriptstyle c$};
\node[vertex] (vertexc) at (3,3)   {$\,$}
	edge [<-,>=latex,out=225,in=45,thick] node[auto,swap,xshift=0.25cm,pos=0.6]{$\scriptstyle(y,y)$} (vertexb)
	edge [->,>=latex,out=180,in=0,thick] node[auto,swap,pos=0.5]{$\scriptstyle(x,x)$} (vertexa);
\node at (3,0) {$\scriptstyle d$};
\node[vertex] (vertexd) at (3,0)  {$\,$}
	edge [->,>=latex,out=180,in=0,thick] node[auto,pos=0.5]{$\scriptstyle(x,x)$} (vertexe)
	edge [<-,>=latex,out=90,in=270,thick] node[auto,swap,pos=0.5]{$\scriptstyle(y,y)$}(vertexc)
	edge [->,>=latex,out=135,in=315,thick] node[auto,xshift=0.15cm,pos=0.6]{$\scriptstyle(y,y)$} (vertexb);
\end{tikzpicture}
\caption{The Moore diagram for the nucleus of the Grigorchuk group.}
\label{fig:Grig}
\end{figure}
\subsection{The Grigorchuk group}\label{Grigorchuk}
We again consider the set $X=\{x,y\}$ and the associated rooted tree $T_X$ with vertex set $X^*$. We define automorphisms $a$, $b$, $c$, and $d$ of $T_X$ recursively by
\begin{alignat}{2}\label{grig_action}
a \cdot(xw)&= yw & \qquad a \cdot (yw)&= xw \\
b \cdot (xw)&= x(a \cdot w) & \qquad  b \cdot(yw)&= y(c \cdot w)\notag \\
c \cdot (xw)&= x(a \cdot w) & \qquad   c \cdot (yw)&= y(d \cdot w)\notag\\
d \cdot (xw)&= xw & \qquad   d \cdot (yw)&= y(b \cdot w).\notag
\end{alignat}
Then the Grigorchuk group $G$ is the subgroup of $\Aut T_X$ generated by $\{a,b,c,d\}$. 

The first assertions of the next proposition are also in the proof of \cite[Theorem~1.6.1]{nek_book}; the assertion about the nucleus is stated without proof on page~57 of \cite{nek_book}.

\begin{prop}\label{grig:nuc}
The generators $a$, $b$, $c$, $d$ of $G$ all have order two, and satisfy $cd=b=dc$, $db=c=bd$ and $bc=d=cb$. The self-similar action $(G,X)$ is contracting with nucleus $\NN = \{e,a,b,c,d\}$.
\end{prop}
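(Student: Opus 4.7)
My strategy is to handle the two assertions separately.

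The identity $a^2 = e$ is immediate from the recursive formula, since $a$ merely swaps the initial letter while fixing the tail. For the remaining equalities I would run a simultaneous induction on $|v|$ for $v \in X^*$, with base case $g \cdot \varnothing = \varnothing$. Computing $b^2 \cdot (xw) = x(a^2 \cdot w) = xw$ and $b^2 \cdot (yw) = y(c^2 \cdot w)$, and cyclically permuting to obtain analogous formulas relating $c^2$ to $d^2$ and $d^2$ to $b^2$, one sees that the three assertions $b^2 = c^2 = d^2 = e$ are mutually reducible under the self-similar recursion and fall together by induction. Likewise, $(cd) \cdot (xw) = x(a \cdot w) = b \cdot (xw)$ and $(cd) \cdot (yw) = y((db) \cdot w)$, together with cyclic analogues, show that $cd = b$, $db = c$, $bc = d$ forms a second simultaneously inductive triple. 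Since each generator is then an involution, the reverse products $dc = b$, $bd = c$, $cb = d$ follow by inversion.

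Turning to the nucleus, set $\SS = \{e, a, b, c, d\}$. The restriction table read off from \eqref{grig_action} shows that $\SS$ is closed under restriction and produces exactly the Moore diagram in Figure \ref{fig:Grig}. The inclusion $\SS \subseteq \NN$ follows from Proposition \ref{Moore_nucleus}, since every vertex of this diagram lies on or is reached from a cycle: $e$ carries two self-loops, $b \to c \to d \to b$ is a 3-cycle on the $(y,y)$-edges, and $a$ is reached from $b$ via the $(x,x)$-edge.

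For the reverse inclusion $\NN \subseteq \SS$, I would use that by the relations established above $\{e, b, c, d\}$ is a Klein four-subgroup, so every $g \in G$ admits a reduced normal form $h_0 \, a \, h_1 \, a \cdots a \, h_k$ with $h_0, h_k \in \{e, b, c, d\}$, $h_i \in \{b, c, d\}$ for $0 < i < k$, and no consecutive $a$'s; let $k(g)$ denote the minimal such $k$. Induction on $k(g)$ should give $g|_v \in \SS$ once $|v|$ is sufficiently large. Unfolding $g|_x$ (or $g|_y$) from right to left using the restriction table displays it as an alternating product of factors drawn from $\{e, a\}$ and $\{b, c, d\}$ of total length $k + 1$; after simplifying via $a^2 = e$ and the Klein-four relations, the resulting normal form contains at most roughly $k/2$ copies of $a$, so iterating yields $g|_v \in \{e, b, c, d\} \subset \SS$ after $O(\log k)$ restrictions. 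The main obstacle is this contraction bookkeeping: correctly tracking how the alternating factors combine after simplification, and handling the edge case $k = 1$ in which a single restriction may fail to strictly decrease $k$, forcing one to bundle two consecutive restrictions together to secure strict progress.
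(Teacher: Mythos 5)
Your treatment of the group relations is essentially the paper's: the simultaneous induction proving $b^2=c^2=d^2=e$ and $cd=b$, $db=c$, $bc=d$ is the same argument the paper phrases as ``$(cd,db,bc)$ satisfies the same recurrences that uniquely determine $(b,c,d)$,'' and the inclusion $\SS\subseteq\NN$ via Proposition~\ref{Moore_nucleus} and the cycles in Figure~\ref{fig:Grig} is identical. Where you genuinely diverge is the contraction argument. The paper avoids normal forms altogether: it observes that, after the relations, the only reduced products of two generators are $R=\{ab,ba,ac,ca,ad,da\}$, checks by twelve calculations that every $g\in R$ has $g|_x,g|_y\in\SS$, and then peels off the last two generators of a length-$n$ product to conclude that each single-letter restriction drops the generator count by one, so $g|_v\in\SS$ once $|v|\geq n-1$. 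You instead run the classical Grigorchuk contraction on the number $k$ of $a$'s in the normal form $h_0ah_1a\cdots ah_k$. That route works and gives a sharper (logarithmic rather than linear) depth bound, but it costs you the one observation you leave implicit: since $a$ swaps the first letter while $b,c,d$ fix it, the letters seen by successive $h_i$ as the restriction propagates from right to left alternate between $x$ and $y$, and only the $h_i$ seeing $x$ can restrict to $a$; this is what yields $k(g|_z)\leq\lceil(k+1)/2\rceil$. The paper's version is more elementary and self-contained; yours is closer to the standard proof in the literature and quantitatively stronger.

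One concrete correction to your edge-case analysis: the bound $\lceil(k+1)/2\rceil$ fails to be strictly less than $k$ for $k=2$ as well as $k=1$ (for example $(babab)|_x=aca$ still has two $a$'s), so you must bundle restrictions, or finish by hand, for $k\in\{1,2\}$, not just $k=1$. Both residual cases do close --- e.g.\ $(aha)|_z=h|_{\bar z}\in\SS$ for $h\in\{b,c,d\}$, and the $k=1$ words reduce after one step to products of two generators, which is exactly the paper's table of restrictions of $R$ --- but as written your induction has a hole at $k=2$.
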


\begin{proof}
The first two relations in \eqref{grig_action} imply that $a^2=e$. Now the other relations imply that
\begin{alignat*}{2}
b^2 \cdot (xw)&= x(a^2 \cdot w) =xw& \qquad   b^2 \cdot(yw)&= y(c^2 \cdot w) \\
c^2\cdot (xw)&= x(a^2\cdot w)=xw & \qquad   c^2 \cdot (yw)&= y(d^2 \cdot w) \\
d^2 \cdot (xw)&= xw & \qquad   d^2 \cdot (yw)&= y(b^2 \cdot w),
\end{alignat*}
and we can prove by induction on $n=|v|$ that $b^2\cdot v=c^2\cdot v=d^2\cdot v=v$ for all $v\in X^*$. Thus $b^2=c^2=d^2=e$ in $G\subset \Aut T_X$. In particular, every element of $G$ is a product of generators $\{a,b,c,d\}$.

Next we note that $a$ is determined by the first two relations in \eqref{grig_action}, and then the other six determine $(b,c,d)$. A computation shows that $(b',c',d')=(cd,db,bc)$ satisfies the same six recurrence relations, and hence we have $cd=b$, $db=c$ and $bc=d$. Since the generators all have order two, inverting gives $dc=b$, $bd=c$ and $cb=d$. Thus the only elements of $G$ which are products of two generators are the elements of
\[
R:=\{ab, ba, ac, ca, ad, da\}. 
\]
Twelve calculations show that for every $g\in R$, both $g|_x$ and $g|_y$ belong to $\{e,a,b,c,d\}$. Thus if $g$ is a product of $n$ generators, we have $g|_v\in \{e,a,b,c,d\}$ for every word $v$ with $|v|\geq n-1$. This proves that $(G,X)$ is contracting, and that the nucleus is contained in $\{e,a,b,c,d\}$.

Since every vertex in the Moore diagram of $\{e,a,b,c,d\}$ in Figure~\ref{fig:Grig} can be reached from a cycle, Proposition~\ref{Moore_nucleus} implies that $\{e,a,b,c,d\}$ is contained in the nucleus.
\end{proof}

\section{Universal algebras associated to a self-similar action}\label{sec:Toeplitz}

Suppose that $(G,X)$ is a self-similar action, and let $C^*(G)$ be the full group $C^*$-algebra of $G$ generated by the unitary representation $\{\delta_g:g\in G\}$. We are interested in two $C^*$-algebras associated to $(G,X)$, which we construct as the Toeplitz algebra and the Cuntz-Pimsner algebra of a Hilbert bimodule $M$ over $C^*(G)$.

As a right Hilbert $C^*(G)$-module, $M$ is the direct sum $M=\bigoplus_{x\in X}C^*(G)$; thus $M=\{m=(m_x)_{x\in X} :m_x\in C^*(G)\}$, with module action $(m_x)\cdot a=(m_xa)$ and inner product 
\[
\langle m,n\rangle=\sum_{x\in X} m_x^*n_x.
\]
For $y\in X$ we define $e_y\in M$ by
\[
(e_y)_x = 
\begin{cases}
1_{C^*(G)}=\delta_e & \text{if } x=y \\
0 & \text{otherwise,}
\end{cases}
\]
and then $\{e_x : x\in X\}$ is an orthonormal basis for $M$ with reconstruction formula 
\begin{equation}\label{recon}
m = \sum_{x \in X} e_x \cdot \langle e_x,m\rangle\quad\text{for $m \in M$.}
\end{equation}
The left action of $C^*(G)$ on $M$ will be the integrated form of the unitary representation $T$ in the next proposition.

\begin{prop}\label{leftact}
Let $(G,X)$ be a self-similar action, and let $g\in G$. Then there is an adjointable operator $T_g$ on $M$ such that 
\begin{equation}\label{defleftaction}
T_g (e_x\cdot a)= e_{g\cdot x}\cdot (\delta_{g|_{x}}a)\quad\text{for $x\in X$ and $a\in C^*(G)$,}
\end{equation}
and $T:g\mapsto T_g$ is a unitary representation of $G$ in $\LL(M)$.
\end{prop}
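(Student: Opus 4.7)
\medskip

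My plan is to define $T_g$ directly on $M$ by the formula forced on it by the action on basis elements together with the reconstruction formula~\eqref{recon}. Specifically, I would set
\[
T_g m := \sum_{x\in X} e_{g\cdot x}\cdot\bigl(\delta_{g|_x}\langle e_x,m\rangle\bigr) \quad\text{for }m\in M.
\]
Since $X$ is finite, this sum is well-defined, and a direct check shows $T_g$ is bounded and right $C^*(G)$-linear, with $T_g(e_x\cdot a) = e_{g\cdot x}\cdot(\delta_{g|_x}a)$ as required; this last identity also means it suffices to verify all remaining claims on the generating elements $e_x\cdot a$.

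\medskip

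The main structural claim is that $T_g$ is adjointable with $T_g^* = T_{g^{-1}}$. Using the orthonormality $\langle e_x\cdot a,e_y\cdot b\rangle=\delta_{x,y}a^*b$, one computes
\[
\langle T_g(e_x\cdot a), e_y\cdot b\rangle = \delta_{g\cdot x,y}\,a^*\delta_{g|_x}^{-1}b,\qquad
\langle e_x\cdot a, T_{g^{-1}}(e_y\cdot b)\rangle = \delta_{x,g^{-1}\cdot y}\,a^*\delta_{g^{-1}|_y}b.
\]
The two Kronecker deltas agree because Lemma~\ref{lem:props}(3) says the action of $g$ on $X$ is bijective, and the two group-algebra factors agree by the identity $g|_x^{-1}=g^{-1}|_{g\cdot x}$ from Lemma~\ref{lem:props}(2). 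This gives adjointability, and simultaneously shows $T_g^* = T_{g^{-1}}$.

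\medskip

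The representation property $T_{gh}=T_gT_h$ is then a short calculation on basis elements: applying the defining formula twice and using Lemma~\ref{lem:props}(2) to replace $\delta_{g|_{h\cdot x}}\delta_{h|_x}$ by $\delta_{g|_{h\cdot x}h|_x}=\delta_{(gh)|_x}$ yields
\[
T_gT_h(e_x\cdot a) = e_{(gh)\cdot x}\cdot\bigl(\delta_{(gh)|_x}a\bigr) = T_{gh}(e_x\cdot a).
\]
Together with the easy identity $T_e=\id_M$ (since $e\cdot x=x$ and $e|_x=e$), this shows $T$ is a group homomorphism into the invertible adjointable operators; combined with $T_g^*=T_{g^{-1}}=T_g^{-1}$, we conclude that each $T_g$ is unitary in $\LL(M)$ and that $T$ is a unitary representation.

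\medskip

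I do not expect a serious obstacle here. The only place where the self-similar structure (as opposed to a mere action on $X$) is really needed is in the cocycle-type identity $(gh)|_x=g|_{h\cdot x}h|_x$ and in $g|_x^{-1}=g^{-1}|_{g\cdot x}$; both are exactly what Lemma~\ref{lem:props}(2) provides. The rest is bookkeeping with the orthonormal basis $\{e_x\}$ and the inner product on $M=\bigoplus_{x\in X}C^*(G)$.
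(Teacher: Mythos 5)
Your proposal is correct and follows essentially the same route as the paper: define $T_g$ by the same explicit sum over the orthonormal basis, verify right $C^*(G)$-linearity and the formula on $e_x\cdot a$, establish adjointability with $T_g^*=T_{g^{-1}}$ via the identity $g|_x^{-1}=g^{-1}|_{g\cdot x}$, and obtain multiplicativity from $(gh)|_x=g|_{h\cdot x}h|_x$. No gaps; this matches the paper's argument step for step.
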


\begin{proof}
We define $T_g:M \to M$ by
\[
T_g(m)= \sum_{y \in X} e_{g \cdot y} \cdot \big(\delta_{g|_y} \langle e_y,m \rangle\big).
\]
For $a\in C^*(G)$ and $m\in M$, we have
\[
T_g(m \cdot a)=\sum_{y \in X} e_{g \cdot y} \cdot (\delta_{g|_y} \langle e_y,m \cdot a \rangle)=\sum_{y \in X} e_{g \cdot y} \cdot (\delta_{g|_y} \langle e_y,m \rangle a) = T_g(m)\cdot a,
\]
and hence $T_g$ is $C^*(G)$ linear. The computation
\begin{align*}
T_g(e_x \cdot a) = \sum_{y \in X} e_{g \cdot y} \cdot \big(\delta_{g|_y} \langle e_y,e_x \cdot a \rangle\big)=e_{g \cdot x} \cdot (\delta_{g|_x} a)
\end{align*}
shows that $T_g$ satisfies Equation \eqref{defleftaction}.

We next show that $T_g$ is adjointable with $T_g^*=T_{g^{-1}}$. Let $g\in G$, $x,y\in X$ and $a,b\in C^*(G)$. Then
\begin{align*}
\langle T_g (e_x\cdot a) , e_y\cdot b \rangle 
& = 
\langle e_{g \cdot x}\cdot (\delta_{g|_{x}}a) , e_y\cdot b\rangle \\
&=
\begin{cases}
\left( \delta_{g|_{x}}a \right)^* b & \text{if } y=g \cdot x \\
0 & \text{otherwise}
\end{cases} \\
&=
\begin{cases}
a^* \delta_{(g|_{x})^{-1}}  b & \text{if } y=g \cdot x \\
0 & \text{otherwise}
\end{cases} \\
&=
\begin{cases}
a^* \delta_{g^{-1}|_{g \cdot x}} b & \text{if } y=g \cdot x \\
0 & \text{otherwise}
\end{cases} \\
&=
\begin{cases}
a^* \delta_{g^{-1}|_{y}} b & \text{if } x=g^{-1} \cdot y \\
0 & \text{otherwise}
\end{cases} \\
&= 
\langle  e_x\cdot a , e_{g^{-1}\cdot y}\cdot \delta_{g^{-1}|_{y}} b \rangle\\
& =\langle  e_x\cdot a , T_{g^{-1}} (e_y\cdot b) \rangle. 
\end{align*}
which implies that $T_g$ is adjointable with $T_g^*=T_{g^{-1}}$. Next we let $g,h\in G$, and the calculation
\begin{align*}\label{Tmult}
T_{gh}(e_x\cdot a) 
&= 
e_{(gh)\cdot x}\cdot (\delta_{gh|_{x}} a)\notag \\
&=
e_{g \cdot (h\cdot x)}\cdot (\delta_{g|_{h\cdot x} h|_x} a)
\quad\text{by Lemma~\ref{lem:props}} \notag\\
&=
T_g (e_{h\cdot x} \cdot (\delta_{h|_{x}}a))\notag\\
&=
T_g T_h (e_x\cdot a).
\end{align*}
shows that $T_{gh}=T_gT_h$. Since $T_g^*=T_{g^{-1}}$, this implies that each $T_g$ is unitary, and that $T$ is a homomorphism of $G$ into the unitary group $\UU\LL(M)$, or, in other words, a unitary representation in $\LL(M)$.
\end{proof}

By \cite[Proposition C.17]{tfb}, the unitary representation $T\colon G \to \UU\LL(M)$ has an integrated form $\pi_T\colon C^*(G)\to \LL(M)$ satisfying $\pi_T(\delta_g)=T_g$, and with the left action defined by $a\cdot m=\pi_T(a)m$, $M$ becomes a Hilbert bimodule over $C^*(G)$.

A representation of $M$ in a $C^*$-algebra $B$ consists of a linear map $\psi:M \to B$ and a homomorphism $\pi:C^*(G) \to B$ satisfying
\begin{align*}
\psi(x \cdot a) &= \psi(x)\pi(a), \\
\psi(x)^*\psi(y) &= \pi(\langle x,y\rangle_{C^*(G)}), \text{ and} \\
\psi(a \cdot x) &= \pi(a)\psi(x)
\end{align*}
for all $x,y \in X$ and $a \in C^*(G)$ (see~\cite[Section 1]{fr}). A representation $(\psi,\pi)$ of $M$ in $B$ induces a homomorphism $(\psi,\pi)^{(1)}:\KK(M)\to B$ such that $(\psi,\pi)^{(1)}(\Theta_{m,n})=\psi(m)\psi(n)^*$ for $m,n\in M$ \cite[Proposition~1.6]{fr}, and $(\psi,\pi)$ is \emph{Cuntz-Pimsner covariant} if\footnote{This is Pimsner's original definition \cite{p}; many authors use a slightly different definition due to Katsura, but the two definitions give the same algebras for the bimodules we consider.} 
\[
(\psi,\pi)^{(1)}(\pi_T(a))=\pi(a)\quad\text{whenever $\pi_T(a)\in \KK(M)$.}
\]

By \cite[Proposition 1.3]{fr}, the Hilbert bimodule $M$ has a Toeplitz algebra $\TT(M)$ generated by a universal representation $(i_M,i_{C^*(G)}):M \to \TT(M)$; if $(\psi,\pi)$ is a representation of $M$ in $B$, we write $\psi \times \pi$ for the homomorphism of $\TT(M)$ into $B$ such that $(\psi \times \pi) \circ i_M = \psi$ and $(\psi \times \pi) \circ i_{C^*(G)} = \pi$. The Cuntz-Pimsner algebra $\OO(M)$ is the quotient of $\TT(M)$ which is generated by a universal Cuntz-Pimsner covariant representation $(j_M,j_{C^*(G)})$. We call $\TT(G,X):=\TT(M)$ and $\OO(G,X):=\OO(M)$ the \emph{Toeplitz algebra} and \emph{Cuntz-Pimsner algebra} of the self-similar action $(G,X)$. It will follow from Corollary~\ref{CPquotient} below that $\OO(G,X)$ is the same as the universal Cuntz-Pimsner algebra $\OO_G$ in \cite[Definition~3.1]{nekra}.

We will use the following presentation of $\TT(G,X)$.

\begin{prop} \label{Toeplitz}
Let $(G,X)$ be a self-similar action, and set $u_g:=i_{C^*(G)}(\delta_g)$ for $g\in G$, and $s_x:=i_M(e_x)$ for $x\in X$. Then
\begin{enumerate}
\item\label{uunitary} $u:G\to \TT(G,X)$ is a unitary representation of $G$,
\smallskip

\item\label{sxTC} $\{s_x:x\in X\}$ is a Toeplitz-Cuntz family of isometries in $\TT(G,X)$, and
\smallskip

\item\label{ssTC} $u_g s_x = s_{g\cdot x} u_{g|_{x}}$ for $g\in G$ and $x\in X$.
\end{enumerate}
The set $\{u_g:g\in G\}\cup\{s_x:x\in X\}$ generates $\TT(G,X)$, and $(\TT(G,X),(u,s))$ is universal for families $\{U_g:g\in G\}$ and $\{S_x:x\in X\}$ satisfying \textnormal{\eqref{uunitary}, \eqref{sxTC}} and \textnormal{\eqref{ssTC}}.
\end{prop}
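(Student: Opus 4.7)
The statements (1)--(3) follow quickly from the construction. Since $i_{C^*(G)}$ is a $*$-homomorphism and $g\mapsto \delta_g$ is a unitary representation inside $C^*(G)$, the $u_g$ form a unitary representation of $G$, giving (1). Orthonormality of $\{e_x\}$ combined with the inner-product axiom $i_M(m)^*i_M(n)=i_{C^*(G)}(\langle m,n\rangle)$ yields $s_x^*s_y=\delta_{x,y}1$, proving (2). For (3), Proposition~\ref{leftact} identifies $\delta_g\cdot e_x$ with $e_{g\cdot x}\cdot \delta_{g|_x}$, so the bimodule compatibility axioms give $u_g s_x = i_M(\delta_g\cdot e_x) = i_M(e_{g\cdot x}\cdot\delta_{g|_x}) = s_{g\cdot x}u_{g|_x}$. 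For generation, the reconstruction formula \eqref{recon} shows that $i_M(m)=\sum_x s_x\, i_{C^*(G)}(\langle e_x,m\rangle)$, so $i_M(M)\cup i_{C^*(G)}(C^*(G))$ lies in the $C^*$-subalgebra generated by $\{u_g\}\cup\{s_x\}$, and this subalgebra therefore exhausts $\TT(G,X)$.

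For the universal property, suppose $(U,S)$ satisfies (1)--(3) in a $C^*$-algebra $B$. The plan is to manufacture a Toeplitz representation $(\psi,\pi)$ of the bimodule $M$ in $B$ with $\pi(\delta_g)=U_g$ and $\psi(e_x)=S_x$, and then invoke the universal property of $\TT(M)$. Integrate $U$ to a $*$-homomorphism $\pi\colon C^*(G)\to B$, and, guided by the reconstruction formula, define
\[
\psi(m):=\sum_{x\in X} S_x\,\pi(\langle e_x,m\rangle).
\]
The right-linearity identity $\psi(m\cdot a)=\psi(m)\pi(a)$ is immediate, and the inner-product identity $\psi(m)^*\psi(n)=\pi(\langle m,n\rangle)$ reduces, via the relations $S_x^*S_y=\delta_{x,y}1$ inherited from (2) for $(U,S)$, to the Parseval-type identity $\sum_x\langle m,e_x\rangle\langle e_x,n\rangle=\langle m,n\rangle$.

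The main obstacle is verifying the left-module compatibility $\psi(a\cdot m)=\pi(a)\psi(m)$. It suffices to check this when $a=\delta_g$ and $m=e_x\cdot b$: by Proposition~\ref{leftact} and right-linearity of $\psi$, the left-hand side equals $\psi(e_{g\cdot x}\cdot\delta_{g|_x}b)=S_{g\cdot x}U_{g|_x}\pi(b)$, while the right-hand side equals $U_gS_x\pi(b)$, and hypothesis (3) for $(U,S)$ makes these agree. Linearity and norm-continuity (using that $\psi$ is contractive by the inner-product identity already verified) extend the identity to all $a\in C^*(G)$ and $m\in M$. The universal property of $\TT(M)$ then supplies the desired homomorphism $\psi\times\pi\colon\TT(G,X)\to B$ sending $u_g\mapsto U_g$ and $s_x\mapsto S_x$, and uniqueness is automatic from the generation statement already proved.
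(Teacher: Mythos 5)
Your proposal is correct and follows essentially the same route as the paper: parts (1)--(3) and generation are verified by the same direct computations, and the universal property is established by integrating $U$ and building the Toeplitz representation $(\psi,\pi)$ with $\psi(e_x\cdot a)=S_x\pi(a)$ (your sum formula is the same map, written via the reconstruction formula), then invoking the universal property of $\TT(M)$.
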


\begin{proof}
The map $u$ is a unitary representation because $\delta:G\to UC^*(G)$ is, and $i_{C^*(G)}$ is a unital homomorphism (which follows from \cite[Corollary~3.3]{br}). We have
\[
s_x^* s_y =  i_M(e_x)^*  i_M(e_y) = 
i_{C^*(G)} (\langle e_x, e_y \rangle) = 
\begin{cases}
1 & \text{if } x=y \\
0 & \text{if } x\neq y,
\end{cases}
\]
which implies that $\{s_x:x\in X\}$ is a Toeplitz-Cuntz family. For \eqref{ssTC}, we compute
\begin{align*}
u_g s_x &= i_{C^*(G)}(\delta_g) i_M (e_x)
= i_M (\delta_g \cdot e_x)=i_M(T_g(e_x))\\&= i_M (e_{g\cdot x}\cdot \delta_{g|_{x}})
= i_M (e_{g\cdot x} )  i_{C^*(G)}(\delta_{g|_{x}})
= s_{g\cdot x} u_{g|_{x}}.
\end{align*}
The $u_g$ generate $i_{C^*(G)}(C^*(G))$, and for $m\in M$, the reconstruction formula \eqref{recon} gives
\[
i_M(m)=i_M\Big(\sum_{x\in X}e_x\cdot \langle e_x,m\rangle\Big)=\sum_{x\in X}i_M(e_x)i_{C^*(G)}\big(\langle e_x,m\rangle\big).
\]
Thus $C^*(u_g,s_x)$ contains all the generators of $\TT(G,X)=\TT(M)$, and must be all of $\TT(G,X)$.

To see the  universal property, suppose $D$ is a $C^*$-algebra, and
$\{U_g\}\subset D$ and $\{S_x\}\subset D$ satisfy \eqref{uunitary}, \eqref{sxTC} and \eqref{ssTC}. We have to find a homomorphism $\pi_{U,S}:\TT(G,X)\to D$ such that $\pi_{U,S}(u_g)=U_g$ and $\pi_{U,S}(s_x)=S_x$.
Let $\pi_U:C^*(G)\to D$ be the integrated form of $U$. Since each element of $M$ has a unique expansion $\sum_{x\in X}e_x\cdot a_x$, there is a well-defined linear function $\psi:M\to M$ such that $\psi(e_x\cdot a)=S_x\pi_U(a)$ for $x\in X$ and $a\in C^*(G)$. 

We claim that $(\psi,\pi_U)$ is a representation of $M$. Let $a\in C^*(G)$ and $x\in X$. Then
\[
\psi((e_x\cdot a)\cdot b)=\psi(e_x\cdot(ab))=S_x\pi_U(ab)=(S_x\pi_U(a))\pi_U(b)=\psi(e_x\cdot a)\pi_U(a).
\]
Next we consider the left action of $b=\delta_g$, which is implemented by the operator $T_g$ of Proposition~\ref{leftact}. We calculate using relation \eqref{ssTC}:
\begin{align*}
\psi(\delta_g\cdot(e_x\cdot a)) &= 
\psi( e_{g\cdot x}\cdot (\delta_{g|_{x}} a) ) 
= S_{g\cdot x} \pi_U(\delta_{g|_{x}} a) \\
&= S_{g\cdot x} U_{g|_{x}} \pi_U(a)
= U_g S_x \pi(a) 
=\pi_U(\delta_g)\psi(e_x\cdot a),
\end{align*}
which implies that $\psi(a\cdot m) = \pi(a)\psi(m)$ for $a\in C^*(G)$ and $m\in M$.
For the inner product, we have 
\begin{align*}
\pi_U(\langle e_x\cdot a , e_y\cdot b \rangle ) &= 
\begin{cases}
\pi_U(a^*b) & \text{if } x=y \\
0 & \text{if } x\neq y
\end{cases}
\\
&=
\pi_U(a)^* S_x^* S_y \pi_U(b) =(S_x\pi_U(a))^* (S_y\pi_U(b))\\ 
&= \psi(e_x\cdot a)^*\psi(e_y\cdot b ).
\end{align*}
So $(\psi,\pi_U)$ is a Toeplitz representation of $M$, as claimed. Thus there is a homomorphism $\psi\times \pi_U:\TT(G,X)=\TT(M)\to D$. We have 
\[
(\psi\times\pi_U)(u_g)=(\psi\times\pi_U)(i_{C^*(G)}(\delta_g))=\pi_U(\delta_g)=U_g,
\]
and similarly $(\psi\times\pi_U)(s_x)=S_x$. Thus $\pi_{U,S}:=\psi\times\pi_U$ has the required properties.
\end{proof}

We now recall some standard notation for working with the Toeplitz-Cuntz family $\{s_x:x\in X\}$. For $v\in X^n$, we write $s_v:=s_{v_1}s_{v_2}\cdots s_{v_n}$. Then for each $n$, $\{s_v:v\in X^n\}$ is a Toeplitz-Cuntz family, so we have $1\geq \sum_{v\in X^n}s_vs_v^*$. For $v,w\in X^*$, the product $s_v^*s_w$ vanishes unless either $v=wv'$ or $w=vw'$, and then collapses down to $s_{v'}^*$ or $s_{w'}$. The relation \eqref{ssTC} in Proposition~\ref{Toeplitz} extends to $u_gs_v=s_{g\cdot v}u_{g|_v}$ for $v\in X^*$.

\begin{cor}\label{spanning}
Let $(G,X)$ be a self-similar action, and take $(u,s)$ as in Proposition~\ref{Toeplitz}. Then
\[
\TT(G,X)=\clsp\{s_v u_g s_w^*:v,w \in X^*,\; g \in G \}.
\]
\end{cor}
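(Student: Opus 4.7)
The plan is to show that
\[
A = \sp\{s_v u_g s_w^* : v,w \in X^*,\ g \in G\}
\]
is a self-adjoint subalgebra of $\TT(G,X)$ which contains the generating set $\{u_g : g \in G\} \cup \{s_x : x \in X\}$; by Proposition~\ref{Toeplitz} this forces $\overline{A} = \TT(G,X)$.

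Containment of the generators is immediate: with the convention $s_\varnothing = 1$, we have $u_g = s_\varnothing u_g s_\varnothing^* \in A$ and $s_x = s_x u_e s_\varnothing^* \in A$. Self-adjointness is also immediate since $(s_v u_g s_w^*)^* = s_w u_{g^{-1}} s_v^*$ is again of the required form.

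The core step is closure under multiplication. I would compute $(s_v u_g s_w^*)(s_{v'} u_h s_{w'}^*)$ and use the standard Toeplitz--Cuntz fact (noted in the paragraph after Proposition~\ref{Toeplitz}) that $s_w^* s_{v'}$ vanishes unless one of $w,v'$ is a prefix of the other. In the case $v' = w v''$ the middle collapses to $s_{v''}$, and applying the iterated self-similarity relation $u_g s_{v''} = s_{g\cdot v''} u_{g|_{v''}}$ gives
\[
(s_v u_g s_w^*)(s_{v'} u_h s_{w'}^*) = s_{v(g\cdot v'')} u_{g|_{v''} h} s_{w'}^* \in A.
\]
In the case $w = v' w''$ the middle collapses to $s_{w''}^*$, and I would take adjoints of the relation $u_{h^{-1}} s_{w''} = s_{h^{-1}\cdot w''} u_{h^{-1}|_{w''}}$ to obtain the dual form
\[
s_{w''}^* u_h = u_{(h^{-1}|_{w''})^{-1}} s_{h^{-1}\cdot w''}^*,
\]
which rewrites the product as $s_v u_{g\,(h^{-1}|_{w''})^{-1}} s_{w'(h^{-1}\cdot w'')}^* \in A$.

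The only place where anything nontrivial happens is the second case, where one must push a $u_h$ past an $s_{w''}^*$; this is handled by the adjoint form of the self-similarity relation and uses Lemma~\ref{lem:props} implicitly through $h^{-1}|_{w''}$. Everything else is routine bookkeeping, and the proof concludes by taking closures.
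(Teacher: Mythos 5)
Your proposal is correct and follows essentially the same route as the paper: show that $\sp\{s_v u_g s_w^*\}$ is a $*$-subalgebra containing the generators, with the only real work being the product computation, which you handle exactly as in the paper's Lemma~\ref{lem:spanning} by collapsing $s_w^*s_{v'}$ and pushing $u_h$ past $s_{w''}^*$ via the adjoint of the self-similarity relation. Your expression $u_{(h^{-1}|_{w''})^{-1}}$ agrees with the paper's $u_{h|_{h^{-1}\cdot w''}}$ by Lemma~\ref{lem:props}(2), so the two computations coincide.
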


As usual, we prove that $A_0:=\newspan\{s_v u_g s_w^*\}$ is a $*$-subalgebra of $\TT(G,X)$, and then since $A_0$ contains all the generators $\{u_g\}\cup \{s_x\}$, its closure has to be all of $\TT(G,X)$. Since $A_0$ is closed under taking adjoints, it remains to show that $\{s_v u_g s_w^*\}$ is closed under multiplication. Since we  will need the result of the computation, we state it separately:

\begin{lemma}\label{lem:spanning}
For $v,w,y,z\in X^*$ and $g,h\in G$, we have
\begin{equation}
\label{product}
(s_v u_g s_w^*)(s_{y} u_{h} s_{z}^*)=
\begin{cases}
s_{v(g \cdot y')} u_{g|_{y'} h} s_{z}^* & \textrm{ if } y=wy' \\
s_v u_{g(h|_{h^{-1} \cdot w'})} s_{z(h^{-1} \cdot w')}^* & \textrm{ if } w=yw' \\
0 & \textrm{ otherwise. }
\end{cases}
\end{equation}
\end{lemma}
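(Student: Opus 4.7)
The plan is to reduce the product to one of the three displayed forms by using the two basic manipulations already available: the Toeplitz-Cuntz orthogonality $s_x^*s_y = \delta_{x,y}$, which extends to $s_w^*s_y$ being nonzero only when one of $w,y$ is a prefix of the other, and the straightening relation $u_g s_v = s_{g\cdot v} u_{g|_v}$ noted after Proposition~\ref{Toeplitz}. The key middle term to simplify is $s_w^* s_y$, and it is then a matter of moving $u_g$ past an $s$ on the left or an $s^*$ on the right.

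First I would handle the case $y=wy'$. Here $s_w^* s_y = s_{y'}$, so the product becomes $s_v u_g s_{y'} u_h s_z^*$. Applying $u_g s_{y'} = s_{g\cdot y'} u_{g|_{y'}}$ gives $s_v s_{g\cdot y'} u_{g|_{y'}} u_h s_z^* = s_{v(g\cdot y')} u_{g|_{y'}h} s_z^*$, which is the first line of \eqref{product}.

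Next I would treat the case $w=yw'$, where $s_w^* s_y = s_{w'}^*$ and the product becomes $s_v u_g s_{w'}^* u_h s_z^*$. The step that requires care is moving $u_h$ to the left of $s_{w'}^*$. Taking adjoints in the straightening relation applied to $h^{-1}$ and $w'$ gives
\[
s_{w'}^* u_h = (u_{h^{-1}} s_{w'})^* = \bigl(s_{h^{-1}\cdot w'} u_{h^{-1}|_{w'}}\bigr)^* = u_{(h^{-1}|_{w'})^{-1}}\, s_{h^{-1}\cdot w'}^*.
\]
By Lemma~\ref{lem:props}(2), $(h^{-1}|_{w'})^{-1} = h|_{h^{-1}\cdot w'}$, so $s_{w'}^* u_h = u_{h|_{h^{-1}\cdot w'}} s_{h^{-1}\cdot w'}^*$. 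Substituting yields $s_v u_{g(h|_{h^{-1}\cdot w'})} s_{z(h^{-1}\cdot w')}^*$, the second line of \eqref{product}. If neither $y$ is a prefix of $w$ nor $w$ a prefix of $y$, then $s_w^* s_y = 0$ and the product vanishes.

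The only genuine obstacle is bookkeeping for the restriction in the second case, in particular verifying that the inverse $(h^{-1}|_{w'})^{-1}$ simplifies to $h|_{h^{-1}\cdot w'}$ via Lemma~\ref{lem:props}(2); everything else is a routine application of the relations in Proposition~\ref{Toeplitz}.
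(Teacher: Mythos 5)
Your proposal is correct and follows essentially the same route as the paper: reduce $s_w^*s_y$ to $s_{y'}$, $s_{w'}^*$, or $0$, then straighten with $u_gs_{y'}=s_{g\cdot y'}u_{g|_{y'}}$ in the first case and with the adjoint of $u_{h^{-1}}s_{w'}=s_{h^{-1}\cdot w'}u_{h^{-1}|_{w'}}$ in the second. Your explicit verification that $(h^{-1}|_{w'})^{-1}=h|_{h^{-1}\cdot w'}$ via Lemma~\ref{lem:props}(2) is exactly the step the paper leaves implicit.
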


\begin{proof}
We have $s_w^* s_y = 0$ unless either $y=wy'$ or $w=yw'$, and hence a computation using the relations $u_g s_w = s_{g \cdot w} u_{g|_{w}}$ 
gives
\begin{align*}
s_v u_g s_w^* s_{y} u_{h} s_{z}^* &=
\begin{cases}
s_v u_g s_{y'} u_{h} s_{z}^* & \textrm{ if } y=wy' \\
s_v u_g s_{w'}^*u_{h} s_{z}^* & \textrm{ if } w=yw' \\
0 & \textrm{ otherwise }
\end{cases} \\
&=
\begin{cases}
s_{v}s_{g \cdot y'} u_{g|_{y'}}u_{h} s_{z}^* & \textrm{ if } y=wy' \\
s_v u_g (s_{h^{-1} \cdot w'} u_{{h^{-1}}|_{w'}})^*  s_{z}^* & \textrm{ if } w=yw' \\
0 & \textrm{ otherwise. }
\end{cases} \\
&=
\begin{cases}
s_{v(g \cdot y')} u_{g|_{y'} h} s_{z}^* & \textrm{ if } y=wy' \\
s_v u_{g (h|_{h^{-1} \cdot w'})} s_{z(h^{-1} \cdot w')}^* & \textrm{ if } w=yw' \\
0 & \textrm{ otherwise,}
\end{cases}
\end{align*}
as required.
\end{proof}

\begin{cor}\label{CPquotient}
Let $(G,X)$ be a self-similar action, and take $(u,s)$ as in Proposition~\ref{Toeplitz}. Then $\OO(G,X)$ is the quotient of $\TT(G,X)$ by the ideal $I$ generated by $1-\sum_{x\in X}s_xs_x^*$.
\end{cor}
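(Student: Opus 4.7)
The plan is to identify the Cuntz-Pimsner covariance ideal in $\TT(M)$ with the ideal generated by $1-\sum_{x\in X}s_xs_x^*$, by exploiting the fact that $\{e_x:x\in X\}$ is a finite orthonormal basis for $M$.

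First, I would note that the reconstruction formula \eqref{recon} says that every $m\in M$ satisfies $m=\sum_x e_x\cdot\langle e_x,m\rangle$, which translates to the operator identity
\[
\id_M=\sum_{x\in X}\Theta_{e_x,e_x}\in \KK(M).
\]
Since $\pi_T(\delta_e)=T_e=\id_M\in\KK(M)$ and $\KK(M)$ is a two-sided ideal of $\LL(M)$, it follows that $\pi_T(a)\in\KK(M)$ for every $a\in C^*(G)$. Therefore the Cuntz-Pimsner covariance condition can be tested on every $a\in C^*(G)$, and the Cuntz-Pimsner ideal $J$ in $\TT(M)$ is generated by the elements
\[
(i_M,i_{C^*(G)})^{(1)}(\pi_T(a))-i_{C^*(G)}(a),\qquad a\in C^*(G).
\]

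Next I would compute this generator explicitly. Using that $\KK(M)$ is an ideal and $\pi_T(a)$ acts on the left,
\[
\pi_T(a)=\pi_T(a)\cdot\id_M=\sum_{x\in X}\Theta_{a\cdot e_x,\,e_x},
\]
so applying $(i_M,i_{C^*(G)})^{(1)}$ (which sends $\Theta_{m,n}$ to $i_M(m)i_M(n)^*$) and using the bimodule identity $i_M(a\cdot e_x)=i_{C^*(G)}(a)i_M(e_x)=i_{C^*(G)}(a)s_x$ gives
\[
(i_M,i_{C^*(G)})^{(1)}(\pi_T(a))=\sum_{x\in X}i_{C^*(G)}(a)s_xs_x^*=i_{C^*(G)}(a)\Bigl(\sum_{x\in X}s_xs_x^*\Bigr).
\]
Consequently, each Cuntz-Pimsner generator has the form
\[
(i_M,i_{C^*(G)})^{(1)}(\pi_T(a))-i_{C^*(G)}(a)=-\,i_{C^*(G)}(a)\Bigl(1-\sum_{x\in X}s_xs_x^*\Bigr).
\]

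From this identity both inclusions are immediate: taking $a=1$ shows that $1-\sum_x s_xs_x^*\in J$, so the ideal $I$ generated by $1-\sum_x s_xs_x^*$ is contained in $J$; conversely the displayed formula shows every generator of $J$ lies in $I$, so $J\subset I$. Hence $J=I$, and $\OO(G,X)=\TT(M)/J=\TT(G,X)/I$. There is no real obstacle here; the only point requiring care is the use of the finiteness of $X$, which is what guarantees $\id_M\in\KK(M)$ and hence that the Cuntz-Pimsner condition can be phrased purely in terms of the single relation $\sum_xs_xs_x^*=1$.
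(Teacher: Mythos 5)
Your proof is correct and follows essentially the same route as the paper: both reduce Cuntz--Pimsner covariance to the single relation $1=\sum_{x\in X}s_xs_x^*$ by exploiting the finite orthonormal basis $\{e_x\}$. The only difference is that the paper cites \cite[Lemma~2.5]{ehr} for this reduction, whereas you derive it directly from $\id_M=\sum_x\Theta_{e_x,e_x}$ and the computation of the covariance defects, which is a perfectly sound (and self-contained) way to supply the same fact.
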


\begin{proof}
Since $\{e_x:x\in X\}$ is an orthonormal basis for $M$, it follows from \cite[Lemma~2.5]{ehr} that a Toeplitz representation $(\psi,\pi)$ is Cuntz-Pimsner covariant if and only if
\[
1=\sum_{x\in X}\psi(e_x)\psi(e_x)^*=\sum_{x\in X}(\psi\times\pi)(i_M(e_x)i_M(e_x)^*)=(\psi\times\pi)\Big(\sum_{x\in X}s_xs_x^*\Big),
\]
and hence if and only if $\psi\times \pi$ vanishes on $I$. 
\end{proof}

We write $u_g$ and $s_x$ also for the images of the generators in $\OO(G,X)$. Since $1=\sum_{x\in X}s_xs_x^*$ in $\OO(G,X)$, the image of the Toeplitz-Cuntz family $\{s_x:x\in X\}$ in $\OO(G,X)$ is a Cuntz family. The same is true of the Toeplitz-Cuntz families $\{s_v:v\in X^n\}$, so for every $n\in\N$ we have $1=\sum_{v\in X^n}s_vs_v^*$ in $\OO(G,X)$.

\begin{remark} \label{remark:cfNek}
As we observed earlier, Corollary~\ref{CPquotient} implies that $\OO(G,X)$ is the universal Cuntz-Pimsner algebra $\OO_G$ in \cite[Definition~3.1]{nekra}. It is not necessarily the same as the Cuntz-Pimsner algebra in \cite{nek_jot}, which is generated by a Cuntz family $\{s_x:x\in X\}$ and a unitary representation $u$ of $G$ which factors through a particular ``permutation representation'' of $C^*(G)$. 
\end{remark}

\begin{cor}\label{Cuntz-finite}
Let $(G,X)$ be a self-similar action with nucleus $\NN$. Then 
\begin{equation}\label{smallspanset}
\OO(G,X)=\clsp\{s_v u_g s_w^* : v,w \in X^*, g \in \NN \}.
\end{equation}
\end{cor}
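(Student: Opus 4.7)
The plan is to descend the spanning result of Corollary~\ref{spanning} through the quotient $\TT(G,X)\twoheadrightarrow\OO(G,X)$, which immediately gives
\[
\OO(G,X) = \clsp\{s_v u_g s_w^* : v,w \in X^*,\ g \in G\},
\]
and then to show that each generator $s_v u_g s_w^*$ is already a finite sum of elements whose group coefficient lies in the nucleus $\NN$.

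The key ingredient, available in $\OO(G,X)$ but not in $\TT(G,X)$, is that $\{s_y : y \in X^n\}$ is a Cuntz family for every $n \in \N$, so $\sum_{y \in X^n} s_y s_y^* = 1$. Inserting this resolution of the identity between $u_g$ and $s_w^*$, and using the extended commutation $u_g s_y = s_{g\cdot y} u_{g|_y}$ for $y \in X^*$ (noted right after Proposition~\ref{Toeplitz}), one obtains
\[
s_v u_g s_w^* = \sum_{y \in X^n} s_v u_g s_y s_y^* s_w^* = \sum_{y \in X^n} s_{v(g\cdot y)}\, u_{g|_y}\, s_{wy}^*.
\]

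Since $(G,X)$ is contracting, the defining property of the nucleus supplies, for each fixed $g \in G$, an integer $n$ such that $g|_y \in \NN$ for every $y \in X^*$ with $|y| \geq n$. Choosing such an $n$ in the identity above exhibits $s_v u_g s_w^*$ as a finite linear combination of elements of the set on the right-hand side of \eqref{smallspanset}, and the stated closed-span equality follows by taking closures. There is no real obstacle here: the content of the argument is simply that the full Cuntz relation $1 = \sum_{y \in X^n} s_y s_y^*$ (the very relation distinguishing $\OO(G,X)$ from $\TT(G,X)$) lets one replace a single $u_g$ by a sum of $u_{g|_y}$, and then the contracting hypothesis pushes these restrictions into $\NN$.
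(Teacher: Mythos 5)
Your proposal is correct and is essentially identical to the paper's own argument: both pass the spanning set of Corollary~\ref{spanning} through the quotient, insert the Cuntz resolution $1=\sum_{y\in X^n}s_ys_y^*$ to rewrite $s_vu_gs_w^*$ as $\sum_{y\in X^n}s_{v(g\cdot y)}u_{g|_y}s_{wy}^*$, and choose $n$ via the nucleus so that every $g|_y$ lands in $\NN$. No differences worth noting.
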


\begin{proof}
Since $\OO(G,X)$ is a quotient of $\TT(G,X)$, Corollary \ref{spanning} implies that the elements $\{s_v u_h s_w^* : v,w \in X^*, h \in G\}$ span a dense subspace of $\OO(G,X)$. We will show that each $s_v u_h s_w^*$ belongs to the right-hand side of \eqref{smallspanset}. Since $(G,X)$ has nucleus $\NN$, there exists $n \in \N$ such that $h|_y \in \NN$ for all $y \in X^n$. But then the Cuntz relation $1= \sum_{y \in X^n} s_y s_y^*$ gives
\begin{align*}
s_vu_hs_w^*&= s_vu_h \Big(\sum_{y \in X^n} s_y s_y^*\Big)s_w^* = \sum_{y \in X^n} s_{v(h \cdot y)} u_{h|_{y}} s_{wy}^*,
\end{align*}
which belongs to the right-hand side of \eqref{smallspanset}.
\end{proof}

\begin{cor}
If $(G,X)$ is contracting with trivial nucleus $\NN = \{e\}$, then  $\OO(G,X)$ is the Cuntz algebra $\OO_{|X|}$.
\end{cor}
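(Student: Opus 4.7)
The plan is straightforward once we invoke Corollary~\ref{Cuntz-finite}. Under the hypothesis $\NN=\{e\}$, that corollary specialises to
\[
\OO(G,X)=\clsp\{s_v u_e s_w^* : v,w\in X^*\}=\clsp\{s_vs_w^* : v,w\in X^*\},
\]
so $\OO(G,X)$ is the closed $*$-subalgebra generated by the Cuntz family $\{s_x:x\in X\}$. One can even see directly how the group has been eliminated: for each $g\in G$, choose $n$ such that $g|_v=e$ for all $v\in X^*$ with $|v|\geq n$; then the Cuntz relation $1=\sum_{v\in X^n}s_vs_v^*$ combined with the extended form of \eqref{ssTC} gives $u_g=u_g\sum_{v\in X^n}s_vs_v^*=\sum_{v\in X^n}s_{g\cdot v}s_v^*$, which already lies in the $C^*$-subalgebra generated by the $s_x$.

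Next I would invoke the universal property of the Cuntz algebra. Since $\{s_x:x\in X\}$ is a Cuntz family in $\OO(G,X)$ (from Proposition~\ref{Toeplitz} and the Cuntz-Pimsner relation $1=\sum_{x\in X}s_xs_x^*$), there is a surjective $*$-homomorphism $\phi:\OO_{|X|}\to\OO(G,X)$ sending the canonical generators to the $s_x$; surjectivity follows from the spanning set above. For $|X|\geq 2$, the algebra $\OO_{|X|}$ is simple, and $\phi$ is nonzero because each $s_x$ is an isometry and hence nonzero, so $\phi$ must be an isomorphism.

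There is no genuine obstacle here: the hard content was already absorbed into Corollary~\ref{Cuntz-finite} and the known simplicity of $\OO_{|X|}$. The only minor caveat is the case $|X|=1$, but then faithfulness of the $G$-action on $X^*$ forces $G=\{e\}$, and $\OO(G,X)$ reduces to the universal $C^*$-algebra generated by one unitary, namely $C(\T)$, which is $\OO_1$ under the usual convention.
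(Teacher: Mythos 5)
Your proposal is correct and follows essentially the same route as the paper: invoke Corollary~\ref{Cuntz-finite} with $\NN=\{e\}$ to see that $\OO(G,X)$ is generated by the Cuntz family $\{s_x\}$, then conclude via the uniqueness theorem for $\OO_{|X|}$ (which is exactly your universal-property-plus-simplicity argument). Your explicit computation $u_g=\sum_{v\in X^n}s_{g\cdot v}s_v^*$ is just the proof of Corollary~\ref{Cuntz-finite} specialised to the trivial nucleus, and the $|X|=1$ caveat is a harmless extra.
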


\begin{proof}
If $\NN = \{e\}$, then Corollary \ref{Cuntz-finite} implies that $\OO(G,X)$ is generated by the Cuntz family $\{s_x \mid x \in X \}$, and hence by the uniqueness theorem for the Cuntz algebra is canonically isomorphic to $\OO_{|X|}$.
\end{proof}

\subsection{ Universal algebras associated with integer matrices}\label{dilation_Toeplitz}

We consider a matrix $A\in M_d(\Z)$ with $N=|\det A|>1$, and  the associated self-similar group $(\Z^d,\Sigma)$ of \S\ref{dilation_example}. We want to show that $\TT(\Z^d,\Sigma)$ and $\OO(\Z^d,\Sigma)$ are the Toeplitz algebra $\TT(C(\T^d),\alpha_A,L)$ and Exel crossed product $C(\T^d)\rtimes_{\alpha_A,L}\N$ studied in \cite{lrr}. 

As in \cite{lrr} and \cite{ehr}, we consider the $N$-to-$1$ covering map $\sigma_A: \T^d \to \T^d$ such that $\sigma_A(e^{2\pi ix})=e^{2\pi iAx}$, and the endomorphism $\alpha_A: f \to f \circ \sigma_A$ of $C(\T^d)$. The function $L:C(\T^d)\to C(\T^d)$ defined by
\[ 
L(f)(z) = \frac{1}{N} \sum_{\sigma_A(w) = z} f(w). 
\]
is a  transfer operator for $\alpha_A$, and $(C(\T^d),\alpha_A,L)$ is the Exel system studied in \cite{ehr} and \cite{lrr}. Following \cite{br}, we write $M_L$ for the associated Hilbert bimodule over $C(\T^d)$, with inner product and operations given by
\[
\langle m,n\rangle:=L(m^*n),\quad 
f \cdot m:=fm\quad \text{and}\quad m \cdot f:=m\alpha_A(f)
\]
for $m,n \in M_L$ and $f \in C(\T^d)$. It is shown in \cite[Lemma~3.3]{LR2} that $M_L$ is complete in the norm defined by the inner product. 
The Toeplitz algebra $\TT(C(\T^d),\alpha_A,L)$ in \cite{lrr} is by definition the Toeplitz algebra $\TT(M_L)$, and the Exel crossed product $C(\T^d)\rtimes_{\alpha_A,L}\N$ is the quotient $\OO(M_L)$ (by \cite{br}).

In \cite[Proposition~3.1]{lrr}, we showed that $\TT(M_L)$ is the universal algebra generated by a unitary representation $u:\Z^d\to U\TT(M_L)$ and an isometry $v$ satisfying 
\begin{itemize}
\item[(E1)] $v u_m = u_{Bm} v$,
\item[(E2)] $ v^* u_m v = \begin{cases}
u_{B^{-1}m} & \text{ if } m \in B \Z^d \\
0 & \text{ otherwise}.  \\
\end{cases}$
\end{itemize}
We will use this presentation and that of Proposition~\ref{Toeplitz} to identify $\TT(\Z^d,\Sigma)$ with $\TT(M_L)$. (The use of the same letter $u$ for the unitary representation of $\Z^d$ in both presentations should not cause problems because our isomorphism takes one $u_n$ to the other $u_n$.)

\begin{prop}\label{isoToeplitz}
Suppose that $A\in M_d(\Z)$ has $|\det A|>1$, write $B:=A^t$, and consider the bimodule $M_L$ constructed above. Define $b:\Sigma^*\to \Z^d$ by $b(w)=w_1+Bw_2+\cdots +B^{k-1}w_k$ for $w\in \Sigma^k$. Then there is an isomorphism $\theta$ of $\TT(\Z^d,\Sigma)=C^*(u,s)$ onto $\TT(M_L)=C^*(u,v)$ such that
\begin{align}\label{thetagen1}
\theta(s_yu_ns_w^*)&=u_{b(y)+B^kn}v^kv^{*l}u_{b(w)}^*\\
&=u_{b(y)}v^kv^{*l}u_{b(w)+B^ln}^*\label{thetagen2}
\end{align}
for $y\in \Sigma^k$, $w\in \Sigma^l$, $n\in\Z^d$.
\end{prop}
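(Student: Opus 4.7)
The strategy is to construct $\theta$ using the universal property of Proposition~\ref{Toeplitz} and build its inverse using the universal property of $\TT(M_L)$ from \cite[Proposition~3.1]{lrr}, then read off the formulas \eqref{thetagen1} and \eqref{thetagen2} by iterating relation (E1).

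For the forward map I set $\theta(u_n):=u_n$ and $\theta(s_x):=u_xv$ for $x\in\Sigma$ and verify the three conditions of Proposition~\ref{Toeplitz}. That $n\mapsto u_n$ is a unitary representation of $\Z^d$ is immediate. That $\{u_xv:x\in\Sigma\}$ is a Toeplitz-Cuntz family reduces to checking $(u_xv)^*(u_yv)=v^*u_{y-x}v$, which equals $1$ when $y=x$ by isometry and $0$ otherwise by (E2), since distinct elements of $\Sigma$ lie in different cosets of $B\Z^d$. The self-similarity relation $u_n(u_xv)=(u_{n\cdot x}v)u_{n|_x}$ follows from (E1) combined with \eqref{idrest}: since $n\cdot x + Bn|_x = c(n+x)+(n+x-c(n+x))=n+x$, we get $u_{n\cdot x}vu_{n|_x}=u_{n\cdot x}u_{Bn|_x}v=u_{n+x}v=u_nu_xv$.

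To derive the formulas, I iteratively apply (E1) in the form $v^ju_m=u_{B^jm}v^j$ to telescope $\theta(s_y)=u_{y_1}vu_{y_2}v\cdots u_{y_k}v$ down to $u_{b(y)}v^k$, and take adjoints to get $\theta(s_w^*)=v^{*l}u_{b(w)}^*$. Then $\theta(s_yu_ns_w^*)=u_{b(y)}v^ku_nv^{*l}u_{b(w)}^*$. Moving $u_n$ leftward past $v^k$ via (E1) yields \eqref{thetagen1}; moving it rightward past $v^{*l}$ via the adjoint identity $u_nv^{*l}=v^{*l}u_{B^ln}$ yields \eqref{thetagen2}.

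For the inverse, I invoke \cite[Proposition~3.1]{lrr} to define $\psi:\TT(M_L)\to\TT(\Z^d,\Sigma)$ by $\psi(u_n):=u_n$ and $\psi(v):=s_0$, which makes sense because $0\in\Sigma$ and $s_0^*s_0=1$. Condition (E1) reduces to $s_0u_m=u_{Bm}s_0$; from Proposition~\ref{Toeplitz} we have $u_{Bm}s_0=s_{Bm\cdot 0}u_{(Bm)|_0}$, and \eqref{idrest} gives $Bm\cdot 0=c(Bm)=0$ and $(Bm)|_0=B^{-1}(Bm)=m$, as required. Condition (E2) reduces to $s_0^*u_ms_0=s_0^*s_{c(m)}u_{m|_0}$, which vanishes unless $c(m)=0$ (equivalently $m\in B\Z^d$), in which case $m|_0=B^{-1}m$ and the result is $u_{B^{-1}m}$. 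The compositions $\theta\circ\psi$ and $\psi\circ\theta$ are then the identity on generators, so $\theta$ is an isomorphism. The only real subtlety is keeping straight which direction of (E1) is needed at each step; once one sees how the matrix $B$ in (E1) is matched against the restriction $B^{-1}$ in \eqref{idrest}, the rest of the algebra is mechanical.
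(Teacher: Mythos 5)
Your proposal is correct and follows essentially the same route as the paper: define $\theta$ on generators by $u_n\mapsto u_n$, $s_x\mapsto u_xv$ and verify the three relations of Proposition~\ref{Toeplitz} using (E1), (E2) and \eqref{idrest}; build the inverse from $v\mapsto s_0$ via the universal property of $\TT(M_L)$; and obtain \eqref{thetagen1}--\eqref{thetagen2} by telescoping (E1) through the product $u_{y_1}v\cdots u_{y_k}vu_nv^{*l}u_{b(w)}^*$. All the key verifications (the coset argument for orthogonality of the $u_xv$, the identity $n\cdot x+Bn|_x=n+x$, and the checks of (E1)/(E2) for $s_0$) match the paper's.
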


\begin{proof}
We begin by building a representation of $\TT(\Z^d,\Sigma)$ in $\TT(M_L)=C^*(u,v)$. We have the $u_n$, and they satisfy condition \eqref{uunitary} of Proposition~\ref{Toeplitz} because $u:\Z^d\to \TT(M_L)$ is a unitary representation. For $x\in \Sigma$, we define $S_{x} = u_{x}v$. Then for $x,y\in \Sigma$, property (E2) gives
\[
S_{x}^*S_{y}=v^* u_{-x}u_{y}v=v^* u_{y-x}v
=\begin{cases}
u_{B^{-1}(y-x)} & \text{ if } y-x\in B\Z^d\\
0 & \text{ otherwise;} \\
\end{cases}
\]
since both $x$ and $y$ are in $\Sigma$, $y-x\in B\Z^d$ if and only if $x=y$. Thus the $\{S_x\}$ are isometries with orthogonal ranges, and form a Toeplitz-Cuntz family, as required in Proposition~\ref{Toeplitz}\,\eqref{sxTC}. Next we use (E1):
\begin{align*}
u_nS_x&=u_{n+x}v=u_{c(n+x)}u_{n+x-c(n+x)}v\\
&=u_{c(n+x)}vu_{B^{-1}(n+x-c(n+x))}\\
&=S_{c(n+x)}u_{B^{-1}(n+x-c(n+x))},
\end{align*}
which is $S_{n\cdot x}u_{n|_x}$ by \eqref{idrest}. Now the universal property of $(\TT(\Z^d,\Sigma),u,s)$ gives us a homomorphism $\theta=\theta_{u,S}:\TT(\Z^d,\Sigma)\to \TT(M_L)$ such that $\theta(s_x)=u_xv$ and $\theta\circ u=u$. The range contains all the generators $u_n$ and $v=S_0$, and hence $\theta$ is onto. 

To see that $\theta$ is injective, we build an inverse. We define $V:=s_0$. Then $V$ is certainly an isometry. For $m\in \Z^d$, an application of \eqref{idrest} gives 
\[
u_{Bm}V=u_{Bm}s_0=s_{c(0+Bm)}u_{Bm|_0}=s_0u_{B^{-1}(Bm+0-c(Bm+0))}=Vu_m,
\]
which is (E1). For (E2), we use \eqref{idrest} again:
\[
V^*u_mV=s_0^*u_ms_0=s_0^*s_{c(m)}u_{B^{-1}(m-c(m))};
\]
since the $s_x$ have mutually orthogonal ranges, this last term vanishes unless $c(m)=0$, or equivalently $m\in B\Z^d$, in which case it is $s_0^*s_{0}u_{B^{-1}m}=u_{B^{-1}m}$. Now the universal property of $\TT(M_L)$ gives a homomorphism $\theta':\TT(M_L)\to \TT(\Z^d,\Sigma)$ such that $\theta'(u_m)=u_m$ and $\theta'(v)=V$. Then $\theta'\circ \theta(s_x)=\theta'(u_xv)=u_xs_0=s_x$, so $\theta'\circ \theta$ is the identity, and $\theta$ is injective.

To check the formulas for $\theta$ on spanning elements, we take $y\in \Sigma^k$, $w\in \Sigma^l$ and $n\in\Z^d$. Then
\begin{align*}
\theta(s_yu_ns_w^*)&=\theta(s_{y_1}s_{y_2}\cdots s_{y_k}u_ns_w^*)=u_{y_1}vu_{y_2}v\cdots u_{y_k}vu_ns_w^*\\
&=u_{y_1}u_{By_2}v^2\cdots u_{y_k}vu_ns_w^*\\
&=u_{y_1+By_2+\cdots +B^{k-1}y_k}v^ku_nv^{*l}u^*_{w_1+Bw_2+\cdots+B^{l-1}w_l}\\
&=u_{y_1+By_2+\cdots +B^{k-1}y_k+B^kn}v^kv^{*l}u^*_{w_1+Bw_2+\cdots+B^{l-1}w_l},
\end{align*}
which is the first formula \eqref{thetagen1}. For \eqref{thetagen1}, instead of pulling $u_n$ past $v^k$ using (E1) at the last step, pull it past $v^{*l}$ using the adjoint of (E1).
\end{proof}

\begin{cor}
Suppose that $A$ and $M_L$ are as above. Then the isomorphism $\theta$ of Proposition~\ref{isoToeplitz} induces an isomorphism $\bar\theta$ of $\OO(\Z^d,\Sigma)=C^*(u_n,s_x)$ onto $\OO(M_L)=C^*(\bar u_n,\bar v)$ such that
\[
\bar\theta(s_v u_n s_w^*)=\bar u_{b(v)+B^kn}\bar v^k\bar v^{*l}\bar u_{b(w)}^*.
\]
\end{cor}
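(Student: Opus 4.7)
The plan is to quotient by the appropriate ideals on both sides and check that the isomorphism $\theta$ of Proposition~\ref{isoToeplitz} intertwines them. By Corollary~\ref{CPquotient}, the algebra $\OO(\Z^d,\Sigma)$ is the quotient of $\TT(\Z^d,\Sigma)$ by the ideal $I$ generated by $1-\sum_{x\in\Sigma}s_xs_x^*$. On the other side, $M_L$ has an orthonormal basis indexed by $\Sigma$, and under the identification of Proposition~\ref{isoToeplitz} each basis element maps to $u_xv$ in $\TT(M_L)$ (this is how the isomorphism of $C^*(u,s)$ onto $C^*(u,v)$ was set up). So by \cite[Lemma~2.5]{ehr} (as used in the proof of Corollary~\ref{CPquotient}), $\OO(M_L)$ is the quotient of $\TT(M_L)$ by the ideal $J$ generated by $1-\sum_{x\in\Sigma}u_xvv^*u_x^*$.

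Next, I would apply $\theta$ to the generator of $I$. Since $\theta(s_x)=u_xv$, we have
\[
\theta\Big(1-\sum_{x\in\Sigma}s_xs_x^*\Big)=1-\sum_{x\in\Sigma}(u_xv)(u_xv)^*=1-\sum_{x\in\Sigma}u_xvv^*u_x^*,
\]
which is precisely the generator of $J$. Because $\theta$ is an isomorphism (by Proposition~\ref{isoToeplitz}), it maps $I$ onto $J$, and therefore descends to an isomorphism $\bar\theta:\OO(\Z^d,\Sigma)\to\OO(M_L)$ of the quotient algebras.

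The formula for $\bar\theta$ on spanning elements is then immediate: letting $q$ and $\bar q$ denote the quotient maps, for $v\in\Sigma^k$, $w\in\Sigma^l$, $n\in\Z^d$ we apply \eqref{thetagen1} and push through the quotients to obtain
\[
\bar\theta(s_vu_ns_w^*)=\bar q\bigl(\theta(s_vu_ns_w^*)\bigr)=\bar q\bigl(u_{b(v)+B^kn}v^kv^{*l}u_{b(w)}^*\bigr)=\bar u_{b(v)+B^kn}\bar v^k\bar v^{*l}\bar u_{b(w)}^*,
\]
as required. The only nontrivial point is the identification of the Cuntz-Pimsner relation for $M_L$ in the $(u,v)$-presentation, and that was essentially already handled by the basis construction underlying Proposition~\ref{isoToeplitz}; once that is in place, the rest is a one-line check that $\theta$ carries the defining relation of one quotient to the other.
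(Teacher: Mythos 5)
Your argument is correct and follows essentially the same route as the paper: identify both Cuntz--Pimsner algebras as quotients by the ideals generated by $1-\sum_{x\in\Sigma}s_xs_x^*$ and $1-\sum_{x\in\Sigma}(u_xv)(u_xv)^*$, check that $\theta$ carries one generator to the other, and push the formula \eqref{thetagen1} through the quotients. The only (immaterial) difference is that the paper simply cites \cite[Proposition~3.3]{lrr} for the description of $\ker(\TT(M_L)\to\OO(M_L))$, whereas you re-derive it from the orthonormal basis of $M_L$ via \cite[Lemma~2.5]{ehr}.
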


\begin{proof}
We know from Corollary~\ref{CPquotient} that $\OO(\Z^d,\Sigma)$ is the quotient of $\TT(\Z^d,\Sigma)$ by the ideal $I$ generated by $1-\sum_{x\in \Sigma}s_xs_x^*$, and from \cite[Proposition~3.3]{lrr} that $\OO(M_L)$ is the quotient of $\TT(M_L)$ by the ideal $J$ generated by $1-\sum_{x\in \Sigma}(u_xv)(u_xv)^*$. Since $\theta(s_x)=u_{b(x)}v=u_xv$, we have 
\[
\theta\Big(1-\sum_{x\in \Sigma}s_xs_x^*\Big)=1-\sum_{x\in \Sigma}(u_xv)(u_xv)^*,
\]
so $\theta(I)=J$, and the result follows.
\end{proof}

\section{A characterisation of KMS states}\label{Sec:char}

Let $(G,X)$ be a self-similar action. The Toeplitz algebra $\TT(G,X)=\TT(M)$ carries a strongly continuous gauge action $\gamma: \T \to \Aut(\TT(G,X))$  such that $\gamma_z(i_{C^*(G)}(a))=i_{C^*(G)}(a)$ for $a \in C^*(G)$ and $\gamma_z(i_M(m))=z i_M(m)$ for $m \in M$. We define $\sigma:\R \to \Aut(\TT(G,X))$ by $\sigma_t=\gamma_{e^{it}}$. In terms of the presentation of Proposition~\ref{Toeplitz}, we have
\[ 
\sigma_t(u_g)=u_g\quad\text{and}\quad\sigma_t(s_v)=e^{it|v|}s_v. 
\]
We also write $\sigma$ for the induced action of $\R$ on $\OO(G,X)$. Our main goal is to find the KMS states of $(\TT(G,X),\sigma)$ and  $(\OO(G,X),\sigma)$. In this section, we give a characterisation of KMS states which will make them easier to identify.

Our conventions for KMS states are the same as those of \cite{lr} and \cite{lrr}, and are explained at the beginning of \cite[\S7]{lr}, for example. For our purposes, a state $\phi$ of a system $(B,\R,\alpha)$ is a \emph{KMS state with inverse temperature $\beta$} (a KMS$_\beta$ state) if $\phi(ab)=\phi(b\alpha_{i\beta}(a))$ for all $a,b$ in a family $\FF$ of analytic elements which span a dense subspace of $B$. We distinguish between KMS$_\infty$ states, which are by definition limits of KMS$_\beta$ states as $\beta\to \infty$, and ground states, for which $z\mapsto \phi(a\alpha_z(b))$ is bounded in the upper-half plane for all $a,b\in \FF$. (This distinction is not made in the standard references \cite{bra-rob,ped}.)

The spanning elements $s_v u_g s_w^* \in \TT(G,X)$ are analytic for $\sigma$ since
\begin{equation}\label{action}
\sigma_t(s_v u_g s_w^*) = e^{it(|v|-|w|)} s_v u_g s _w^* 
\end{equation}
and the function $z \mapsto e^{iz(|v|-|w|)}$ is entire. Thus a state $\phi$ is KMS$_\beta$ for $\sigma$ if and only if
\begin{align}
\label{KMS_iff}
\phi\big((s_v u_g s_w^*)(s_y u_h s_z^*)\big)&=\phi\big((s_y u_h s_z^*)\sigma_{i\beta}(s_v u_g s_w^*)\big)\notag\\
&=e^{-\beta(|v|-|w|)} \phi\big((s_y u_h s_z^*)(s_v u_g s_w^*)\big).
\end{align}
We now have the following analogue of \cite[Lemma 8.3]{lr} and \cite[Proposition 4.1]{lrr}.

\begin{prop}\label{KMS>beta}
Let $(G,X)$ be a self-similar action and suppose that $\sigma:\R \rightarrow \Aut\TT(G,X)$ satisfies~\eqref{action}. 
\begin{enumerate}
\item\label{ida} For $\beta < \log \d$, there are no KMS$_\beta$-states for $\sigma$.
\item\label{idb} For $\beta \geq \log \d$, a state $\phi$ is a KMS$_\beta$-state for $\sigma$ if and only if
\begin{equation}\label{phi_trace}
\phi(u_g u_h)=\phi(u_h u_g)\quad\text{for $g,h \in G$}
\end{equation}
and 
\begin{equation}\label{char:eqn}
\phi(s_v u_g s_w^*) =
\begin{cases}
e^{-\beta|v|} \phi(u_g) & \textrm{ if } v=w \\
0 & \textrm{ otherwise}
\end{cases} \\
\end{equation}

\end{enumerate}
\end{prop}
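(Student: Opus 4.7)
For part~(1), use the Toeplitz-Cuntz family $\{s_x:x\in X\}$ from Proposition~\ref{Toeplitz}, whose ranges are orthogonal projections summing to at most $1$. Applying the KMS$_\beta$ condition to the analytic elements $s_x$ and $s_x^*$ gives
$\phi(s_xs_x^*)=\phi(s_x^*\sigma_{i\beta}(s_x))=e^{-\beta}\phi(s_x^*s_x)=e^{-\beta}$ for every $x\in X$. Summing and using $\sum_{x\in X}s_xs_x^*\leq 1$ yields $|X|e^{-\beta}\leq 1$, i.e.\ $\beta\geq\log|X|$.

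For the ``only if'' direction in part~(2), the trace identity \eqref{phi_trace} is immediate from the KMS relation applied to $a=u_g$, $b=u_h$, both of which are fixed by $\sigma$. For \eqref{char:eqn}, first apply KMS to $(s_vu_g,\,s_w^*)$: since $\sigma_{i\beta}(s_vu_g)=e^{-\beta|v|}s_vu_g$, this gives $\phi(s_vu_gs_w^*)=e^{-\beta|v|}\phi(s_w^*s_vu_g)$. Next apply KMS to $(s_w^*,\,s_vu_g)$ to obtain $\phi(s_w^*s_vu_g)=e^{\beta|w|}\phi(s_vu_gs_w^*)$. Chaining these yields $\phi(s_vu_gs_w^*)\bigl(1-e^{-\beta(|v|-|w|)}\bigr)=0$, so $\phi$ vanishes whenever $|v|\neq|w|$ (we have $\beta>0$ because $|X|\geq 2$). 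When $|v|=|w|$, the first identity combined with $s_w^*s_v=\delta_{v,w}\,1$ produces \eqref{char:eqn}.

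The converse is the main substance. Assuming \eqref{phi_trace} and \eqref{char:eqn}, I would verify \eqref{KMS_iff} on a generic pair of spanning elements $a=s_vu_gs_w^*$ and $b=s_yu_hs_z^*$, using Lemma~\ref{lem:spanning} to compute both $ab$ and $ba$ and then \eqref{char:eqn} to evaluate $\phi$. Consider the case $y=wy'$: by Lemma~\ref{lem:spanning}, $\phi(ab)=e^{-\beta|z|}\phi(u_{g|_{y'}h})$ whenever $z=v(g\cdot y')$, equivalently $z=vz'$ with $z'=g\cdot y'$. In this same regime the second case of Lemma~\ref{lem:spanning} applies to $ba$, giving $ba=s_yu_{h\,g|_{y'}}s_y^*$ and $\phi(ba)=e^{-\beta|y|}\phi(u_{h\,g|_{y'}})$. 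The trace property \eqref{phi_trace} equates $\phi(u_{g|_{y'}h})=\phi(u_{h\,g|_{y'}})$, and the length identity $|z|=|v|+|y'|=(|v|-|w|)+|y|$ accounts for the prefactor $e^{-\beta(|v|-|w|)}$. The symmetric case $w=yw'$ is handled analogously, and when one side formally vanishes the overlap conditions on the other side force the same.

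I expect the main obstacle to be the bookkeeping in the converse: matching the Lemma~\ref{lem:spanning} case for $ab$ with the correct one for $ba$, handling the degenerate overlaps (e.g.\ $y'=\varnothing$, where the two Lemma cases collapse together and force $y=w$, $v=z$), and verifying that whenever the spanning element produced on one side evaluates nonzero under \eqref{char:eqn}, the other side delivers the matching $\phi$-value. Modulo this accounting, the only nontrivial algebraic input is \eqref{phi_trace}, which is precisely what is needed to swap the order of the group-algebra factors produced on the two sides.
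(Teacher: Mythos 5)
Your plan is correct and follows essentially the same route as the paper: part (1) and the forward direction of (2) via the KMS relation applied to the analytic spanning elements, and the converse by combining Lemma~\ref{lem:spanning} with \eqref{char:eqn} and the trace identity \eqref{phi_trace}, with the length bookkeeping $|z|=|v|-|w|+|y|$ producing the factor $e^{-\beta(|v|-|w|)}$. The only (cosmetic) difference is in the converse: you evaluate $\phi(ab)$ and $\phi(ba)$ separately by applying Lemma~\ref{lem:spanning} to each product, whereas the paper transforms $\phi(ab)$ into $e^{-\beta(|v|-|w|)}\phi(ba)$ by a chain of operator identities in the case $|y|\geq|w|$ and then handles $|y|<|w|$ by taking adjoints via $\phi(a)=\overline{\phi(a^*)}$ -- a trick you could also borrow to halve your case analysis.
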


\begin{proof}
Suppose that $\phi$ is a KMS$_\beta$-state. First, for $g,h \in G$ we have
\[ \phi(u_g u_h)=\phi(u_h \sigma_{i\beta}(u_g)) = \phi(u_h u_g). \]
Next, we take $v,w \in X^*$ and calculate
\begin{align*}
\phi(s_v u_g s_w^*) &= \phi(u_g s_w^*\sigma_{i\beta}(s_v)) \\
&=
\begin{cases}
e^{-\beta|v|}\phi(u_g s_w^* s_v) & \textrm{ if } v=wv' \textrm{ or } w=vw' \\
0 & \textrm{ otherwise }
\end{cases} \\
&= 
\begin{cases}
e^{-\beta|v|}\phi(s_v \sigma_{i\beta}(u_g s_w^*)) & \textrm{ if } v=wv' \textrm{ or } w=vw' \\
0 & \textrm{ otherwise }
\end{cases} \\
&=
\begin{cases}
e^{-\beta(|v|-|w|)}\phi(s_v u_g s_w^*) & \textrm{ if } v=wv' \textrm{ or } w=vw' \\
0 & \textrm{ otherwise. }
\end{cases}
\end{align*}
Thus 
\[
\phi(s_v u_g s_w^*) \neq 0 \implies |v|=|w| \text{ and either } v=wv' \text{ or } w=vw' \iff  v=w. \]
If $v=w$, then
\[ 
\phi(s_v u_g s_v^*) = \phi(u_g s_v^* \sigma_{i\beta}(s_v)) = e^{-\beta |v|}\phi(u_g s_v^* s_v)= 
e^{-\beta |v|}\phi(u_g).
\]
Since $\{s_x:x \in X\}$ is a Toeplitz-Cuntz family, we have
\[ 
1 = \phi(1) \geq \phi \Big( \sum_{x \in X} s_x s_x^*\Big) 
= \sum_{x \in X} \phi(s_x s_x^*) = \sum_{x \in X} e^{-\beta}\phi(s_x^* s_x) = \sum_{x \in X} e^{-\beta} = \d e^{-\beta}, 
\]
so that $\beta \geq \log \d$. This completes the proof of \eqref{ida} and the forward implication in \eqref{idb}.

For the backward implication in \eqref{idb}, suppose $\phi$ is a tracial state on $C^*(G)$ satisfying \eqref{phi_trace} and \eqref{char:eqn}. We aim to show that $\phi$ satisfies \eqref{KMS_iff}. We first suppose that $|y|\geq |w|$. Since $y$ is longer than $w$, the product $s_w^*s_y$ in the middle of the left-hand side of \eqref{KMS_iff} vanishes unless $y=wy'$. If $y=wy'$, then Lemma~\ref{lem:spanning} implies that 
\[
\phi \big((s_v u_g s_w^*)(s_{y} u_{h} s_{z}^*)\big)=\phi(s_{v(g \cdot y')} u_{g|_{y'} h} s_{z}^*),
\]
which by \eqref{char:eqn} vanishes unless $z=v(g \cdot y')$. For $y=wy'$ and $z=v(g \cdot y')$, we compute
\begin{align}
\phi\big((s_v u_g s_w^*)(s_{y} u_{h} s_{z}^*)\big)&=\phi(s_{v(g \cdot y')} u_{g|_{y'} h} s_{z}^*) \notag\\
&=e^{-\beta |v(g \cdot y')|} \phi(u_{g|_{y'}} u_{h}) \notag\\
&=e^{-\beta (|v|+|y'|)} \phi(u_{h} u_{g|_{y'}}) \quad\text{(using \eqref{phi_trace})}\notag\\
&=e^{-\beta (|v|+|y'|)}e^{\beta|y|} \phi(s_y u_{h} u_{g|_{y'}}s_y^*) \quad\text{(using \eqref{char:eqn})} \notag\\
&=e^{-\beta (|v|+|y'|-|y|)} \phi(s_y u_{h} u_{g|_{y'}}s_{y'}^*s_{w}^*)\notag\\
&=e^{-\beta (|v|-|w|)} \phi(s_y u_{h} (s_{y'} u_{g^{-1}|_{g \cdot y'}})^*s_{w}^*)\notag\\
&=e^{-\beta (|v|-|w|)} \phi(s_y u_{h} (s_{g^{-1}(g \cdot y')} u_{g^{-1}|_{g \cdot y'}})^*s_{w}^*)\notag\\
&=e^{-\beta (|v|-|w|)} \phi(s_y u_{h} (u_{g^{-1}} s_{g\cdot y'})^*s_{w}^*) \notag\\
&=e^{-\beta (|v|-|w|)} \phi(s_y u_{h} s_{g\cdot y'}^* u_{g}s_{w}^*)\notag\\
&=e^{-\beta (|v|-|w|)} \phi(s_y u_{h} s_{v(g\cdot y')}^*s_vu_{g}s_{w}^*)\notag\\
&=e^{-\beta (|v|-|w|)} \phi\big((s_y u_{h} s_{z}^*)(s_v u_{g}s_{w}^*)\big).\label{compLHS}
\end{align}
To complete the proof of \eqref{idb}, we observe that
\begin{align*}
\phi\big((s_y u_{h} s_{z}^*)&(s_v u_{g}s_{w}^*)\big)\not=0\\
&\Longrightarrow\text{either $v=zv'$ and $w=y(h\cdot v')$, or $z=vz'$ and $y=w(g^{-1}\cdot z')$}\\
&\Longrightarrow \text{$z=vz'$ and $y=w(g^{-1}\cdot z')$}\quad\text{(because $|y|\geq |w|$)}\\
&\Longrightarrow \text{$z=v(g\cdot y')$ and $y=wy'$}\quad\text{(with $y'=g^{-1}\cdot z'$).}
\end{align*}
Thus if there is no $y'$ satisfying $y=wy'$ and $z=v(g\cdot y')$, we have
\begin{equation}\label{whenvanishes}
e^{-\beta (|v|-|w|)} \phi\big((s_y u_{h} s_{z}^*)(s_v u_{g}s_{w}^*)\big)=0=\phi\big((s_v u_g s_w^*)(s_y u_h s_z^*)\big).
\end{equation}
Together, \eqref{compLHS}, which holds when there exists $y'$ such that $y=wy'$ and $z=v(g \cdot y')$, and \eqref{whenvanishes}, which holds otherwise, imply \eqref{KMS_iff} for $|y|\geq |w|$.

We now suppose that $|y|<|w|$, and take adjoints to reduce to the case in the previous paragraph. Since $\phi(a)=\overline{\phi(a^* )}$, we have
\begin{align}
\phi\big((s_v u_g s_w^*)(s_y u_h s_z^*)\big)&=\overline{\phi\big((s_z u_{h^{-1}}s_y^*)(s_w u_{g^{-1}} s_v^*)\big)}\notag\\
&=e^{-\beta(|z|-|y|)}\overline{\phi\big((s_w u_{g^{-1}} s_v^*)(s_z u_{h^{-1}}s_y^*)\big)}\notag\\
&=e^{-\beta(|z|-|y|)} \phi\big((s_y u_h s_z^*)(s_v u_g s_w^*)\big).\label{takeadjoint}
\end{align}
The calculation in the previous paragraph shows that the right-hand side of \eqref{takeadjoint} vanishes unless $w=yw'$ and $v=z(g^{-1}\cdot w')$, in which case $|v|-|z|=|w'|=|w|-|y|$ and $|z|-|y|=|v|-|w|$. Thus
\[
\phi\big((s_v u_g s_w^*)(s_y u_h s_z^*)\big)
=e^{-\beta(|v|-|w|)} \phi\big((s_y u_h s_z^*)(s_v u_g s_w^*)\big),
\]
and we have proved \eqref{KMS_iff} in the remaining case $|y|<|w|$.
\end{proof}

\section{Existence of KMS states above the critical inverse temperature}\label{existence}

\begin{thm}\label{repn_hilbert}
Let $(G,X)$ be a self-similar action, and let $\tau$ be a normalised trace on $C^*(G)$. Then for every $\beta > \log|X|$, there is a KMS$_\beta$ state $\psi_{\beta,\tau}$ satisfying
\begin{equation} \label{repn_state}
\psi_{\beta,\tau}(s_v u_g s_w^*)=\begin{cases}
(1-|X|e^{-\beta}) \displaystyle{\sum_{k=0}^\infty e^{-\beta(k+|v|)} \Big(\sum_{\{y \in X^{k}\,:\,g \cdot y=y\}}}  \tau(\delta_{g|_{y}})\Big)& \text{ if } v=w \\
0 & \text{ otherwise.} \\
\end{cases}
\end{equation}
\end{thm}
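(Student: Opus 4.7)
My plan is to construct $\psi_{\beta,\tau}$ via a Fock-type representation of $\TT(G,X)$ built from $\tau$ by Rieffel induction, and then verify the two conditions of Proposition~\ref{KMS>beta}.

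I would first form the Fock module $\FF(M) := \bigoplus_{n\geq 0} M^{\otimes n}$ over $C^*(G)$, with $M^{\otimes 0} := C^*(G)$. The bimodule relation $\delta_g\cdot e_x = e_{g\cdot x}\cdot\delta_{g|_x}$ of Proposition~\ref{leftact}, together with induction on $n$, shows that $\{e_v := e_{v_1}\otimes\cdots\otimes e_{v_n} : v\in X^n\}$ is an orthonormal basis for $M^{\otimes n}$ and that $\delta_g\cdot e_v = e_{g\cdot v}\cdot\delta_{g|_v}$. Taking $S_x(\xi) := e_x\otimes\xi$ (creation) and $U_g$ the diagonal left action of $\delta_g$ produces adjointable operators on $\FF(M)$ satisfying relations \eqref{uunitary}--\eqref{ssTC} of Proposition~\ref{Toeplitz} (the covariance $U_gS_x = S_{g\cdot x}U_{g|_x}$ being immediate from the balanced-tensor-product identity $\delta_g\cdot (e_x\otimes\eta) = e_{g\cdot x}\otimes(\delta_{g|_x}\cdot\eta)$), so the universal property yields a $*$-representation $\tilde\pi\colon\TT(G,X)\to\LL(\FF(M))$. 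Applying the GNS construction to $(C^*(G),\tau)$ gives $(\HH_\tau,\pi_\tau,\xi_\tau)$, and the interior tensor product $\HH := \FF(M)\otimes_{C^*(G)}\HH_\tau$ carries a representation $\pi := \tilde\pi\otimes 1\colon \TT(G,X)\to B(\HH)$. Each $e_v\otimes\xi_\tau$ is a unit vector (since $\langle e_v,e_v\rangle = \delta_e$ and $\tau(\delta_e)=1$), and for $\beta>\log|X|$ I define
\[
\psi_{\beta,\tau}(T) := (1-|X|e^{-\beta})\sum_{n=0}^\infty e^{-\beta n}\sum_{v\in X^n}\big\langle e_v\otimes\xi_\tau,\; \pi(T)(e_v\otimes\xi_\tau)\big\rangle.
\]
This series is absolutely convergent (bounded by $(1-|X|e^{-\beta})\|T\|\sum_n(|X|e^{-\beta})^n = \|T\|$), a sum of positive functionals hence positive, and $\psi_{\beta,\tau}(1) = 1$ by geometric summation; so $\psi_{\beta,\tau}$ is a state on $\TT(G,X)$.

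The main work is then a direct calculation on a spanning element $T = s_vu_gs_w^*$. For $y\in X^n$ with $n\geq |w|$ one has $\pi(s_w^*)(e_y\otimes\xi_\tau) = e_{y'}\otimes\xi_\tau$ when $y = wy'$ and zero otherwise; then $\pi(u_g)(e_{y'}\otimes\xi_\tau) = e_{g\cdot y'}\otimes\pi_\tau(\delta_{g|_{y'}})\xi_\tau$, and $\pi(s_v)$ prepends $v$. Hence the diagonal matrix element at $e_y\otimes\xi_\tau$ vanishes unless $v = w$ and $y = vz$ with $g\cdot z = z$, in which case it equals $\tau(\delta_{g|_z})$. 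Reindexing by $k := n - |v|$ in the outer sum produces exactly \eqref{repn_state}. Condition \eqref{char:eqn} of Proposition~\ref{KMS>beta} is visible in the formula, and \eqref{phi_trace} reduces to $\psi_{\beta,\tau}(u_{gh}) = \psi_{\beta,\tau}(u_{hg})$: the bijection $y\mapsto z:=h\cdot y$ maps $\{y\in X^k : gh\cdot y = y\}$ onto $\{z\in X^k : hg\cdot z = z\}$, and the cocycle identities $gh|_y = g|_z h|_y$ and $hg|_z = h|_y g|_z$ of Lemma~\ref{lem:props} reduce the equality to $\tau(\delta_{g|_z}\delta_{h|_y}) = \tau(\delta_{h|_y}\delta_{g|_z})$, which holds because $\tau$ is tracial. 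Proposition~\ref{KMS>beta} then gives that $\psi_{\beta,\tau}$ is a KMS$_\beta$ state.

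The main obstacle I anticipate is the careful bookkeeping in the trace-property computation: the two sums indexing $\psi_{\beta,\tau}(u_{gh})$ and $\psi_{\beta,\tau}(u_{hg})$ range over different fixed-point sets, and one must apply Lemma~\ref{lem:props}\,(2) in exactly the right order to match terms before invoking the tracial property of $\tau$. The construction of $\tilde\pi$ on $\FF(M)$ and the positivity and normalisation of $\psi_{\beta,\tau}$ are straightforward once the Fock picture is in place, and the formula \eqref{repn_state} follows by direct unwinding of the representation.
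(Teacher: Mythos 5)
Your proposal is correct and follows essentially the same route as the paper: Rieffel induction of the GNS representation of $\tau$ through the Fock module $\bigoplus_n M^{\otimes n}$, the orthonormal-basis lemma for $M^{\otimes n}$ with $\delta_g\cdot e_v=e_{g\cdot v}\cdot\delta_{g|_v}$, the same weighted sum of vector states, and verification of the KMS property via Proposition~\ref{KMS>beta} using the bijection $y\mapsto h\cdot y$ together with the cocycle identity and traciality of $\tau$. The only (immaterial) difference is that you first represent $\TT(G,X)$ by adjointable operators on the Fock module and then tensor with $\HH_\tau$, whereas the paper defines $U_g$ and $S_x$ directly on $\bigoplus_j M^{\otimes j}\otimes_{C^*(G)}\KK_\tau$.
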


To prove Theorem~\ref{repn_hilbert}, we adapt ideas from the proofs of \cite[Theorem~2.1]{ln} and \cite[Proposition~6.1]{lrr}. Both involve induced representations; as in \cite{ln} rather than \cite{lrr}, we apply Rieffel's Hilbert-bimodule formulation of induced representations to the Fock bimodule $\FF(M):=\bigoplus_{j=0}^\infty M^{\otimes j}$. We take $\pi_\tau:C^*(G)\to B(\KK_\tau)$ to be the GNS-representation of $C^*(G)$, and then Rieffel induction gives a representation
\begin{equation}\label{defpi}
\pi := \bigoplus_{j=0}^\infty M^{\otimes j}\dashind \pi_\tau \quad \text{on the Hilbert space} \quad  \HH_\pi:= \bigoplus_{j=0}^\infty M^{\otimes j} \otimes_{C^*(G)} \KK_\tau.
\end{equation}
To calculate with the induced representations $M^{\otimes j}\dashind \pi_\tau$, we need to understand the bimodules $M^{\otimes j}$.

\begin{lemma}\label{tenspowers}
Suppose that $(G,X)$ is a self-similar action and $\{e_x:x\in X\}$ is the orthonormal basis for the Hilbert bimodule $M$ constructed in \S\ref{sec:Toeplitz}. Fix $j\geq 1$. Then 
\[
\{e_v:=e_{v_1}\otimes\cdots\otimes e_{v_j}:v\in X^j\}
\]
is an orthonormal basis for $M^{\otimes j}$ with reconstruction formula
\begin{equation}\label{reconj}
m=\sum_{v\in X^j}e_v\cdot\langle e_v,m\rangle\quad\text{for $m\in M^{\otimes j}$.}
\end{equation}
The left action of $C^*(G)$ on $M^{\otimes j}$ satisfies $\delta_g\cdot e_v=e_{g\cdot v}\cdot \delta_{g|_{v}}$. 
\end{lemma}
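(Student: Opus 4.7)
The proof proposal is to induct on $j$, with the base case $j=1$ being exactly the content of Section~\ref{sec:Toeplitz}: $\{e_x:x\in X\}$ is an orthonormal basis for $M$ by construction, the reconstruction formula \eqref{recon} was verified there, and the left action formula $\delta_g\cdot e_x=e_{g\cdot x}\cdot \delta_{g|_x}$ is obtained from the definition of the left action as the integrated form of the representation $T$ from Proposition~\ref{leftact}. This base case provides all the computational ingredients we will need.

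For the inductive step, I would write $M^{\otimes(j+1)}=M\otimes_{C^*(G)} M^{\otimes j}$ and use the standard formula for the $C^*(G)$-valued inner product on a balanced tensor product, namely $\langle m\otimes n, m'\otimes n'\rangle = \langle n, \langle m,m'\rangle\cdot n'\rangle$. For $v,w\in X^{j+1}$ with $v=v_1 v'$ and $w=w_1 w'$, this gives
\[
\langle e_v, e_w\rangle = \langle e_{v'}, \langle e_{v_1}, e_{w_1}\rangle\cdot e_{w'}\rangle = \delta_{v_1,w_1}\,\langle e_{v'}, e_{w'}\rangle,
\]
which by the inductive hypothesis equals $\delta_{v,w}\cdot 1_{C^*(G)}$. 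So the $e_v$ are orthonormal.

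For the reconstruction formula, I would first verify it on elementary tensors $m_0\otimes m'\in M\otimes_{C^*(G)} M^{\otimes j}$: applying the base-case formula to $m_0$ and then the inductive formula to $\langle e_{v_1}, m_0\rangle\cdot m'\in M^{\otimes j}$ yields
\[
m_0\otimes m' = \sum_{v_1\in X}\sum_{v'\in X^j} e_{v_1}\otimes e_{v'}\cdot \langle e_{v'}, \langle e_{v_1},m_0\rangle\cdot m'\rangle,
\]
and the inner product identity above rewrites the coefficient as $\langle e_v, m_0\otimes m'\rangle$. Linearity and continuity in the Hilbert-module norm extend this to all of $M^{\otimes(j+1)}$. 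Orthonormality plus this span statement is precisely the assertion that $\{e_v\}$ is an orthonormal basis.

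Finally, for the left action I would compute
\[
\delta_g\cdot e_v = (\delta_g\cdot e_{v_1})\otimes e_{v_2}\otimes\cdots\otimes e_{v_j} = e_{g\cdot v_1}\cdot\delta_{g|_{v_1}}\otimes e_{v_2}\otimes\cdots\otimes e_{v_j},
\]
then slide $\delta_{g|_{v_1}}$ across each balanced tensor product one factor at a time, applying the base-case left action formula together with the cocycle identity $(g|_{v_1\cdots v_k})|_{v_{k+1}}=g|_{v_1\cdots v_{k+1}}$ from Lemma~\ref{lem:props}\,(2). Iterating $j$ times and recognising the resulting first $j$ entries as the letters of $g\cdot v=(g\cdot v_1)(g|_{v_1}\cdot v_2)\cdots(g|_{v_1\cdots v_{j-1}}\cdot v_j)$ (as spelled out in the displayed formula just before Lemma~\ref{lem:props}) yields $\delta_g\cdot e_v = e_{g\cdot v}\cdot\delta_{g|_v}$. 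This is all bookkeeping once the cocycle identity is in hand; no step is a genuine obstacle, but the place demanding most care is keeping track of which factor the scalar $\delta_{g|_{v_1\cdots v_k}}$ currently sits next to when sliding it across the balanced tensor products.
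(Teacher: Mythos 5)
Your proposal is correct and follows essentially the same route as the paper: induction on $j$ via the decomposition $M^{\otimes(j+1)}=M\otimes_{C^*(G)}M^{\otimes j}$, the same inner-product computation for orthonormality, the same two-stage expansion for the reconstruction formula, and the same use of the balanced tensor product plus the cocycle identity for the left action. The only cosmetic difference is that the paper handles the left action by a single application of the inductive hypothesis to $\delta_{g|_{v_1}}\cdot e_{v'}$ rather than by sliding the scalar across all $j$ factors one at a time, but these amount to the same computation.
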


\begin{proof}
We prove this result by induction on $j$. Proposition~\ref{leftact} and the surrounding discussion give the result for $j=1$. Suppose it is true for $j=k$. For two words $w=w_1w'$ and $v=v_1v'$ in $X^{k+1}$, we have $|w'|=|v'|=k$, and
\begin{align*}
\langle e_{w},e_{v}\rangle&=\langle e_{w_1}\otimes e_{w'},e_{v_1}\otimes e_{v'}\rangle=\langle e_{w'},\langle e_{w_1},e_{v_1}\rangle\cdot e_{v'}\rangle\\
&=\delta_{w_1,v_1}\langle e_{w'},e_{v'}\rangle=\delta_{w_1,v_1}\delta_{w',v'}1_{C^*(G)}=\delta_{w,v}1_{C^*(G)},
\end{align*}
giving orthonormality. For $m=m_1\otimes m'\in M^{\otimes (k+1)}=M\otimes_{C^*(G)} M^{\otimes k}$, we have
\begin{align*}
m&=\Big(\sum_{x\in X}e_x\cdot\langle e_x,m_1\rangle\Big)\otimes m'=\sum_{x\in X}e_x\otimes \langle e_x,m_1\rangle\cdot m'\\
&=\sum_{x\in X}e_x\otimes \Big(\sum_{x\in X^k}e_v\cdot\big\langle e_v,\langle e_x,m_1\rangle\cdot m'\big\rangle\Big)\\
&=\sum_{x\in X,\;v\in X^k}e_x\otimes e_v\cdot\langle e_x\otimes e_v,m_1\otimes m'\rangle,\\
&=\sum_{x\in X,\;v\in X^k}e_{xv}\cdot\langle e_{xv},m\rangle,
\end{align*}
which is the right-hand side of \eqref{reconj} for $j=k+1$. This formula extends by linearity and continuity of the inner product to $m\in M^{\otimes(k+1)}$. Finally, let $w=w_1w'\in X^{k+1}$ and $g\in G$. Then because the tensor product is balanced over $C^*(G)$, the inductive hypothesis gives
\begin{align*}
\delta_g\cdot e_w&=(\delta_g\cdot e_{w_1})\otimes e_{w'}=(e_{g\cdot w_1}\cdot \delta_{g|_{w_1}})\otimes e_{w'}\\
&= e_{g\cdot w_1}\otimes (\delta_{g|_{w_1}}\cdot e_{w'})=e_{g\cdot w_1}\otimes (e_{g|_{w_1}\cdot w'}\cdot\delta_{(g|_{w_1})|_{w'}})\\
&=(e_{g\cdot w_1}\otimes e_{g|_{w_1}\cdot w'})\cdot\delta_{g|_{w}}=e_{g\cdot w}\cdot\delta_{g|_{w}},
\end{align*}
and we now have the whole inductive hypothesis for $j=k+1$.
\end{proof}

\begin{proof}[Proof of Theorem~\ref{repn_hilbert}]
As promised, we take the GNS representation $\pi_\tau$ of $C^*(G)$ on $\KK_\tau$, and consider the representation $\pi$ of \eqref{defpi}. Lemma~\ref{tenspowers} implies that every vector in $M^{\otimes j} \otimes_{C^*(G)} \KK_\tau$ is a finite sum $\sum_{v\in X^j}e_v\otimes k_v$, and that the representation $M^{\otimes j}\dashind \pi_\tau$ of $C^*(G)$ is the integrated form of the unitary representation $U^j$ of $G$ on $M^{\otimes j} \otimes_{C^*(G)} \KK_\tau$ characterised by 
\begin{equation}\label{charUj}
U_g^j(e_v\otimes k)=T_g(e_v)\otimes k=(e_{g\cdot v}\cdot \delta_{g|_v})\otimes k=e_{g\cdot v}\otimes \pi_\tau(\delta_{g|_v})k;
\end{equation}
we set $U_g=\bigoplus U^j_g$. Since the $\{e_v\otimes k:v\in X^j\}$ are mutually orthogonal, there are isometries $S_x$ on $\HH_\pi$ such that $S_x(e_v\otimes k)=e_{xv}\otimes k$, and these isometries form a Toeplitz-Cuntz family. The following calculation using \eqref{charUj} shows that $U$ and $S$ satisfy property \eqref{ssTC} of Proposition~\ref{Toeplitz}:
\begin{align*}
S_{g\cdot x}U_{g|_x}(e_v\otimes k)
&=S_{g\cdot x}(e_{g|_x\cdot v}\otimes \pi_\tau(\delta_{(g|_x)|_v})k)
=e_{(g\cdot x)(g|_x\cdot v)}\otimes \pi_\tau(\delta_{g|_{xv}})k\\
&=e_{g\cdot (xv)}\otimes \pi_\tau(\delta_{g|_{xv}})k=U_g(e_{xv}\otimes k)=U_gS_x(e_v\otimes k).
\end{align*}
Now Proposition~\ref{Toeplitz} gives us a representation $\pi_{U,S}:\TT(G,X)\to B(\HH_\pi)$ such that $\pi_{U,S}(s_vu_gs_w^*)=S_vU_gS_w^*$.

We now take $\xi_\tau$ to be the canonical cyclic vector for the GNS representation $\pi_\tau$ (so that $\xi_\tau$ is the image in $\KK_\tau$ of the identity $1_{C^*(G)}$), and define $\psi_{\beta,\tau}:\TT(G,X)\to \C$ by
\begin{equation}\label{KMS_hopeful}
\psi_{\beta,\tau}(a) := (1-|X|e^{-\beta}) \sum_{j=0}^\infty e^{-\beta j}\Big(\sum_{z \in X^j} \big(\pi_{U,S}(a)(e_{z}\otimes \xi_\tau)\,|\,e_{z}\otimes\xi_\tau\big) \Big).
\end{equation} 
Since $\psi_{\beta,\tau}$ is a norm-convergent sum of vector states with non-negative coefficients, it is a positive functional; since $|X^j|=|X|^j$, summing the geometric series $\sum_j (|X|e^{-\beta})^j$ shows that $\psi_{\beta,\tau}(1)=1$, and $\psi_{\beta,\tau}$ is a state. 

To verify \eqref{repn_state}, we take $a=s_vu_gs_w^*$. Then
\begin{align}
\big(\pi_{\beta,\tau}(a)(e_{z}\otimes \xi_\tau)\,|\,e_{z}\otimes \xi_\tau\big)&=\big(S_v U_g S_w^*(e_{z}\otimes \xi_\tau)\,|\,e_{z}\otimes \xi_\tau\big)\notag\\
&=\big(U_gS_w^*(e_{z}\otimes \xi_\tau)\,|\,S_v^*(e_{z}\otimes \xi_\tau)\big).\label{spatialcalc}
\end{align}
We have $S_w^*(e_{z}\otimes \xi_\tau)=0$ unless $z=wz'$, and hence \eqref{spatialcalc} vanishes unless $z=wz'=vz''$, in which case
\begin{align*}
\big(\pi_{\beta,\tau}(a)(e_{z}\otimes \xi_\tau)\,|\,e_{z}\otimes \xi_\tau\big)&=\big(U_g(e_{z'}\otimes \xi_\tau)\,|\,e_{z''}\otimes \xi_\tau\big)\\
&=\big(e_{g\cdot z'}\otimes \pi_\tau(\delta_{g|_{z'}})\xi_\tau\,|\,e_{z''}\otimes \xi_\tau\big).
\end{align*}
This last inner product vanishes unless $g\cdot z'=z''$, which implies $|z'|=|z''|$ and $|v|=|z|-|z'|=|z|-|z''|=|w|$; now $z=wz'=vz''$ forces $v=w$ and $z'=z''$. Thus the inner product vanishes unless $z=vz'$ and $g\cdot z'=z'$. Noticing that $z=vz'$ implies $|z|\geq |v|$ and writing $y$ for $z'$, we find that
\begin{equation*}
\psi_{\beta,\tau}(s_vu_gs_w^*)=\begin{cases}
(1-|X|e^{-\beta}) \displaystyle{\sum_{j=|v|}^\infty e^{- \beta j} \Big(\sum_{\{y \in X^{j-|v|}\,:\, g \cdot y=y\}}} (\pi_\tau(\delta_{g|_{y}})\xi_\tau\,|\,\xi_\tau)\Big)& \text{if } v=w \\
0 & \text{otherwise.} \\
\end{cases}
\end{equation*}
Since $(\pi_\tau(\delta_{g|_{y}})\xi_\tau\,|\,\xi_\tau)=\tau(1_{C^*(G)}^*\delta_{g|_{y}}1_{C^*(G)})=\tau(\delta_{g|_{y}})$, taking $k=j-|v|$ gives \eqref{repn_state}.

We show that $\psi_{\beta,\tau}$ is a KMS$_\beta$ state by checking properties \eqref{char:eqn} and \eqref{phi_trace} of Proposition~\ref{KMS>beta}. The first is straightforward. We trivially have $\psi_{\beta,\tau}(s_vu_gs_w^*)=0$ if $v\not=w$. For $g\in G$ and $v\in X^j$, we have
\begin{align*}
e^{-\beta|v|}\psi_{\beta,\tau}(u_g)
&=(1-|X|e^{-\beta})e^{-\beta|v|} \displaystyle{\sum_{k=0}^\infty e^{-\beta k}\Big(\sum_{\{y \in X^{k}\,:\,g \cdot y=y\}}}  \tau(\delta_{g|_{y}})\Big),
\end{align*}
which on pulling $e^{-\beta |v|}$ inside the sum becomes the right-hand side of the formula \eqref{repn_state} for $\psi_{\beta,\tau}(s_vu_gs_v^*)$. For the second, we need to take $g,h\in G$ and compare
\begin{align}
\psi_{\beta,\tau}(u_gu_h)=\psi_{\beta,\tau}(u_{gh})&=(1-|X|e^{-\beta}) \displaystyle{\sum_{k=0}^\infty e^{-\beta k}\Big(\sum_{\{y \in X^{k}\,:\,(gh)\cdot y=y\}}}  \tau(\delta_{(gh)|_{y}})\Big)\label{1stsum}\\
\intertext{with}
\psi_{\beta,\tau}(u_hu_g)=\psi_{\beta,\tau}(u_{hg})&=(1-|X|e^{-\beta}) \displaystyle{\sum_{k=0}^\infty e^{-\beta k}\Big(\sum_{\{z \in X^{k}\,:\,(hg)\cdot z=z\}}}  \tau(\delta_{(hg)|_{z}})\Big).\label{2ndsum}
\end{align}
The function $f:X^k\to X^k$ defined by $f(y)=h\cdot y$ is a bijection, and
\[
(gh)\cdot y=y\Longleftrightarrow g\cdot(h\cdot y)=h^{-1}\cdot(h\cdot y)
\Longleftrightarrow (hg)\cdot(h\cdot y)=h\cdot y,
\]
so $f$ maps the index set $\{y \in X^{k}:(gh)\cdot y=y\}$ in \eqref{1stsum} onto the one $\{z \in X^{k}:(hg)\cdot z=z\}$ in \eqref{2ndsum}. We claim that the function $f$ also matches up the corresponding summands. To see this, suppose $(gh)\cdot y=y$. Then because $\tau$ is a trace, we have
\[
\tau(\delta_{(gh)|_y})=\tau(\delta_{g|_{h\cdot y}h|_y})=\tau(\delta_{h|_y}\delta_{g|_{h\cdot y}})=\tau(\delta_{h|_{h^{-1}\cdot(h\cdot y)}}\delta_{g|_{h\cdot y}}).
\]
The identity $(gh)\cdot y=y$ implies that $h^{-1}\cdot (h\cdot y)=g\cdot (h\cdot y)$, so 
\[
\tau(\delta_{(gh)|_y})=\tau(\delta_{h|_{g\cdot(h\cdot y)}}\delta_{g|_{h\cdot y}})=\tau(\delta_{(hg)|_{h\cdot y}})=\tau(\delta_{(hg)|_{f(y)}}),
\]
as claimed. We deduce that $\psi_{\beta,\tau}(u_gu_h)=\psi_{\beta,\tau}(u_hu_g)$, and now Theorem~5.1 implies that $\psi_{\beta,\tau}$ is a KMS$_\beta$ state.
\end{proof}

While we have the formulas for the induced representations handy, we describe the ground states and KMS$_\infty$ states of our system. 

\begin{prop}\label{ground}
Suppose that $(G,X)$ is a self-similar action. Then for every state $f$ of $C^*(G)$, there is a ground state $\phi_f$ on $(\TT(G,X),\sigma)$ such that
\begin{equation}\label{phitau}
\phi_f(s_vu_gs_w^*)=
\begin{cases} f(\delta_g)&\text{if $v=w=\varnothing$}\\
0&\text{otherwise.}\end{cases}
\end{equation}
The map $f\mapsto\phi_f$ is an affine homeomorphism of the state space $S(C^*(G))$ onto the ground states of $(\TT(G,X),\sigma)$. For $f\in S(C^*(G))$, $\phi_f$ is a KMS$_\infty$ state if and only if $f$ is a trace. 
\end{prop}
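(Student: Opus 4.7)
The plan is to adapt the induced-representation construction from the proof of Theorem~\ref{repn_hilbert}, but now starting from the GNS data of an arbitrary state rather than a trace. Take the GNS triple $(\KK_\omega, \pi_\omega, \xi_\omega)$ of $\omega$, form the Fock-module representation
\[
\pi := \bigoplus_{j \geq 0} M^{\otimes j}\dashind \pi_\omega \quad \text{on} \quad \HH_\pi = \bigoplus_{j\geq 0} M^{\otimes j} \otimes_{C^*(G)} \KK_\omega,
\]
and consider the associated representation $\pi_{U,S} : \TT(G,X) \to B(\HH_\pi)$ constructed exactly as in Theorem~\ref{repn_hilbert}. Define $\phi_\omega(a) := (\pi_{U,S}(a)\xi_\omega \mid \xi_\omega)$, where $\xi_\omega$ is viewed as a vector in the $j = 0$ summand $\KK_\omega$. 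Since $S_v$ sends the $j$-summand into the $(j+|v|)$-summand, its adjoint $S_v^*$ annihilates $\xi_\omega$ whenever $v \neq \varnothing$, and similarly for $S_w^*$. Hence $\phi_\omega(s_v u_g s_w^*) = (U_g S_w^*\xi_\omega \mid S_v^*\xi_\omega)$ vanishes unless $v = w = \varnothing$, in which case it collapses to $(\pi_\omega(\delta_g)\xi_\omega \mid \xi_\omega) = \omega(\delta_g)$, giving \eqref{phitau}.

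Next I verify that $\phi_\omega$ is a ground state, and that every ground state arises this way. For the ground-state property, it suffices to bound $z \mapsto \phi_\omega(a\sigma_z(b))$ on $\{\Im z \geq 0\}$ for $a, b$ in the spanning set of Corollary~\ref{spanning}. By \eqref{action}, $\phi_\omega(a\sigma_z(b)) = e^{iz(|v|-|w|)}\phi_\omega(ab)$ for $b = s_v u_g s_w^*$; a quick case-check using Lemma~\ref{lem:spanning} shows that $ab$ collapses to an element of the form $s_{y'}u_{g'}s_{w'}^*$, on which $\phi_\omega$ vanishes unless $w = \varnothing$, in which case $|v|-|w|\geq 0$ and boundedness is automatic. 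For surjectivity, the key observation is that one should not test against $b = s_v u_g s_w^*$ directly, but rather against the single creation element $b = s_w^*$: for any ground state $\phi$, take $a = s_v u_g$ and $b = s_w^*$ with $w \neq \varnothing$. Then $\sigma_z(b) = e^{-iz|w|}b$ with $|e^{-iz|w|}| = e^{|w|\Im z}\to\infty$, so boundedness of $z \mapsto e^{-iz|w|}\phi(s_v u_g s_w^*)$ forces $\phi(s_v u_g s_w^*) = 0$. Applying this to $x^*$ and using $\phi(x^*) = \overline{\phi(x)}$ disposes of the remaining case $w = \varnothing$, $v \neq \varnothing$. Hence $\phi$ is determined by the values $\omega(\delta_g) := \phi(u_g)$, which form a state on $C^*(G)$ since $\delta_g \mapsto u_g$ integrates to a unital $*$-homomorphism; then $\phi$ and $\phi_\omega$ agree on the spanning set and hence everywhere. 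Affineness and weak*-bicontinuity of $\omega \mapsto \phi_\omega$ follow immediately from the identity $\omega(\delta_g) = \phi_\omega(u_g)$.

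For the KMS$_\infty$ characterisation, suppose first that $\omega$ is a trace. Then $\psi_{\beta,\omega}$ exists for every $\beta > \log|X|$ by Theorem~\ref{repn_hilbert}, and the crude bound $\bigl|\sum_{y\in X^k \colon g\cdot y=y} \omega(\delta_{g|_{y}})\bigr| \leq |X|^k$ together with summation of a geometric series shows that $\psi_{\beta,\omega}(s_v u_g s_w^*) \to \phi_\omega(s_v u_g s_w^*)$ as $\beta \to \infty$ on the dense spanning subspace, so $\phi_\omega$ is a KMS$_\infty$ state. Conversely, any KMS$_\infty$ state is by definition a weak* limit of KMS$_{\beta_n}$ states $\phi_n$ with $\beta_n \to \infty$, and each $\phi_n$ satisfies $\phi_n(u_g u_h) = \phi_n(u_h u_g)$ by Proposition~\ref{KMS>beta}\,(b); passing to the limit gives $\omega(\delta_g \delta_h) = \omega(\delta_h \delta_g)$, so $\omega$ is a trace. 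The main subtlety in the whole argument is really the surjectivity step — spotting that testing with the asymmetric pair $(a,b) = (s_v u_g,\, s_w^*)$ annihilates $\phi$ on every spanning element with nontrivial right-hand support in one stroke, rather than having to argue word by word.
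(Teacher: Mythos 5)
Your proposal is correct and follows essentially the same route as the paper: the same Fock-module induced representation with the vector state at the vacuum $e_\varnothing\otimes\xi_\omega$, the same recognition of ground states via the vanishing condition \eqref{ground_eqn} (you inline the proof of Lemma~\ref{idground}, including the standard test against the pair $(s_vu_g,\,s_w^*)$ from \cite[Lemma~8.4]{lr}, where the paper simply cites it), and the same identification of the KMS$_\infty$ limit. The only cosmetic difference is that for a trace $\omega$ you verify directly that $\psi_{\beta,\omega}\to\phi_\omega$ as $\beta\to\infty$ on the spanning set, whereas the paper first extracts a weak$^*$ convergent subsequence and then identifies its limit by the same estimate.
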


That states on $C^*(G)$ give ground states is proved in greater generality in \cite[Theorem~2.2]{ln}. However, as in Theorem~\ref{repn_hilbert}, we can use the special features of our situation to give specific formulas. 

We begin with an analogue of \cite[Lemma 8.4]{lr} which will allow us to recognise ground states. The proof of that lemma carries over almost verbatim to this situation.

\begin{lemma}\label{idground}
Suppose that $(G,X)$ is a self-similar action. A state $\phi$ of $\TT(G,X)$ is a ground state of $(\TT(G,X),\sigma)$ if and only if
\begin{equation}\label{ground_eqn}
\phi(s_v u_g s_w^*) = \begin{cases}
\phi(u_g) & \text{ if } v=w=\varnothing \\
0 & \text{ otherwise.} \\
\end{cases}
\end{equation}
\end{lemma}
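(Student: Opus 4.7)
The plan is to mimic the proof of the analogous characterisation in \cite[Lemma~8.4]{lr}, exploiting the explicit formula \eqref{action} for $\sigma$ on the spanning elements $s_v u_g s_w^*$, which makes them analytic. For spanning elements $a = s_v u_g s_w^*$ and $b = s_y u_h s_z^*$ we have
\[
\phi(a\sigma_\zeta(b)) = e^{i\zeta(|y|-|z|)}\phi(ab),
\]
so the ground-state condition becomes a constraint on when $\phi(ab)$ vanishes.

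For the ``only if'' direction, I would test the ground-state condition on the factorisation $a = s_v$, $b = u_g s_w^*$ of $s_v u_g s_w^*$, giving $\phi(a\sigma_\zeta(b)) = e^{-i\zeta|w|}\phi(s_v u_g s_w^*)$. Since $|e^{-i\zeta|w|}| = e^{|w|\Im\zeta}$ is unbounded on the upper half-plane whenever $|w|\geq 1$, this forces $\phi(s_v u_g s_w^*) = 0$ whenever $w \neq \varnothing$. Applying the same argument to $c = s_w u_{g^{-1}} s_v^*$ and using $\phi(c^*) = \overline{\phi(c)}$ gives the vanishing also whenever $v \neq \varnothing$. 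The residual case $v = w = \varnothing$ reduces $s_v u_g s_w^*$ to $u_g$, making \eqref{ground_eqn} immediate.

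For the ``if'' direction, I assume $\phi$ satisfies \eqref{ground_eqn} and, for spanning $a, b$ as above, show that $\zeta \mapsto e^{i\zeta(|y|-|z|)}\phi(ab)$ is bounded on the upper half-plane; this reduces to proving that $\phi(ab)\neq 0$ forces $|y|\geq |z|$. The product $ab$ is computed in Lemma~\ref{lem:spanning}: either $y = wy'$ and $ab = s_{v(g\cdot y')} u_{g|_{y'} h} s_z^*$, or $w = yw'$ and $ab = s_v u_{g(h|_{h^{-1}\cdot w'})} s_{z(h^{-1}\cdot w')}^*$. By \eqref{ground_eqn}, non-vanishing of $\phi(ab)$ forces both the $s$-word and the $s^*$-word of $ab$ to be empty, and inspecting the two cases yields $v = \varnothing$, $z = \varnothing$, and $y = w$; in particular $|y| - |z| = |w| \geq 0$, as required.

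The main obstacle is not mathematical but organisational: faithfully tracking the two cases of Lemma~\ref{lem:spanning} and confirming that in each non-vanishing scenario the exponent $|y| - |z|$ is indeed non-negative. Once this bookkeeping is complete, the rest is a short comparison of exponents.
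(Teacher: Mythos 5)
Your proof is correct and takes essentially the same route as the paper, which simply remarks that the argument of \cite[Lemma~8.4]{lr} carries over almost verbatim: that argument is precisely your boundedness test, applied to the factorisation $s_v\cdot(u_gs_w^*)$ and its adjoint for the forward direction, and to the products computed in Lemma~\ref{lem:spanning} for the converse.
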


\begin{proof}[Proof of Proposition~\ref{ground}]
Given a state $f$ of $C^*(G)$, we take the GNS representation $\pi_f$ of $C^*(G)$ on $\KK_f$ with cyclic vector $\xi_f$, and consider the representation $\pi_{U,S}$ of $\TT(G,X)$ on $\bigoplus_{j=0}^\infty M^{\otimes j}\otimes_{C^*(G)}\KK_f$, as in the proof of Theorem~\ref{repn_hilbert}. Then we define
\[
\phi_f(a)=\big(\pi_{U,S}(a)(e_\varnothing\otimes \xi_f)\,|\,e_\varnothing\otimes \xi_f\big)\quad\text{for $a\in \TT(G,X)$.}
\]
Then $\phi_f$ is a state, and 
\begin{align*}
\phi_f(s_vu_gs_w^*)&=\big(S_vU_gS_w^*(e_\varnothing\otimes \xi_f)\,|\,e_\varnothing\otimes \xi_f\big)\\
&=\begin{cases}
0&\text{unless $v=w=\varnothing$}\\
(\pi_f(\delta_g)\xi_f\,|\,\xi_f\big)=f(\delta_g)&\text{if $v=w=\varnothing$.}
\end{cases}
\end{align*}
Lemma~\ref{idground} implies that $\phi_f$ is a ground state. The map $f \mapsto \phi_f$ is continuous, affine and injective, and it is onto because $\phi=\phi_f$ for $f=\phi|_{C^*(G)}$. 

If $\phi$ is a KMS$_\infty$ state, then $\phi$ is the limit of a sequence of KMS$_\beta$ states, and equation \eqref{phi_trace} in Proposition~\ref{KMS>beta} implies that $\tau:=\phi|_{C^*(G)}$ is a trace. For the converse, suppose that $\tau$ is a trace on $C^*(G)$. Then we can use weak$^*$ compactness to get a sequence $\{\psi_{\beta_n,\tau}\}$ which converges to a KMS$_\infty$ state $\phi$. Since \eqref{char:eqn} gives $\psi_{\beta,\tau}(s_vu_gs_v^*)=e^{-\beta|v|}\psi_{\beta,\tau}(u_g)$, we have $\psi_{\beta,\tau}(s_vu_gs_v^*)\to 0$ as $\beta\to \infty$ whenever $|v|>0$. On the other hand, \eqref{repn_state} gives
\[
\psi_{\beta,\tau}(u_g)=(1-|X|e^{-\beta})\Big(\tau(\delta_g)+\sum_{k=1}^\infty e^{-k\beta} \Big(\sum_{\{y \in X^{k}\,:\,g \cdot y=y\}}  \tau(\delta_{g|_{y}})\Big).
\]
Now
\[
\Big|\sum_{k=1}^\infty e^{-k\beta} \Big(\sum_{\{y \in X^{k}\,:\,g \cdot y=y\}}  \tau(\delta_{g|_{y}})\Big)\Big|\leq |X|e^{-\beta}\sum_{j=0}^\infty |X|^je^{-\beta j} =\frac{|X|e^{-\beta}}{1-|X|e^{-\beta}}
\]
converges to $0$ as $\beta\to \infty$, and hence $\psi_{\beta,\tau}(u_g)\to\tau(\delta_g)$. Thus the limit $\phi$ is the state $\phi_\tau$ described in  \eqref{phitau}, and $\phi_\tau$ is KMS$_\infty$.
\end{proof}

In the situation of \cite{lrr}, where the group $G=\Z^d$ is abelian, every state on $C^*(G)$ is a trace, and we recover \cite[Proposition~8.1]{lrr}: every ground state of $(\TT(\Z^d,\Sigma),\sigma)$ is a KMS$_\infty$ state. For nonabelian $G$, though, there are many states of $C^*(G)$ which are not traces, and $(\TT(G,X),\sigma)$ has many ground states which are not KMS$_\infty$ states.

\section{Parameterisation of KMS$_\beta$ states on the Toeplitz algebra}\label{parameterise}

\begin{thm}\label{Conditioning}
Suppose that $(G,X)$ is a self-similar action  and $\beta > \log |X|$. The map $\tau\mapsto\psi_{\beta,\tau}$ in Theorem~\ref{repn_hilbert} is an affine homeomorphism from the simplex of normalised traces on the full group $C^*$-algebra $C^*(G)$ onto the simplex of KMS$_\beta$ states on $(\TT(G,X),\sigma)$.\end{thm}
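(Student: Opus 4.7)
The plan is to use Theorem~\ref{repn_hilbert} for existence and to produce an explicit inverse via compression to the corner of $\TT(G,X)$ cut off by the ``vacuum'' projection $P := 1 - \sum_{x\in X} s_x s_x^*$. The map $\tau \mapsto \psi_{\beta,\tau}$ is affine by inspection of \eqref{repn_state}, and weak$^*$-continuous because each coefficient $\tau \mapsto \tau(\delta_{g|_y})$ is weak$^*$-continuous and the resulting series is uniformly majorised by $\sum_{k=0}^\infty (|X|e^{-\beta})^k = (1-|X|e^{-\beta})^{-1}$. So the substance is the construction of an affine, continuous inverse.

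First I would establish two properties of $P$. A short calculation using $u_g s_x = s_{g\cdot x}u_{g|_x}$ from Proposition~\ref{Toeplitz} shows that $u_g P u_g^* = P$, so $P$ commutes with every $u_g$ and hence with $C^*(G)$; and for any KMS$_\beta$ state $\phi$ the KMS condition gives $\phi(s_x s_x^*) = e^{-\beta}\phi(s_x^* s_x) = e^{-\beta}$, so $\phi(P) = 1-|X|e^{-\beta} > 0$. Given such a $\phi$ I then define
\[
\tau_\phi(a) := \frac{\phi(Pa)}{1-|X|e^{-\beta}} \quad\text{for } a \in C^*(G).
\]
Because $P$ commutes with $C^*(G)$ this is a state on $C^*(G)$; and because both $P$ and each $u_g$ have $\sigma$-weight zero, the KMS condition yields $\phi(Pu_gu_h) = \phi(u_h P u_g) = \phi(P u_h u_g)$, so $\tau_\phi$ is tracial.

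To close the loop I would verify that $\psi_{\beta,\tau_\phi} = \phi$; by Proposition~\ref{KMS>beta}\,\eqref{idb} this reduces to checking equality on each $u_g$. Taking adjoints of \eqref{ssTC} together with $g^{-1}|_y^{-1} = g|_{g^{-1}\cdot y}$ from Lemma~\ref{lem:props} gives $s_x^* u_g = u_{g|_{g^{-1}\cdot x}} s_{g^{-1}\cdot x}^*$, and then Proposition~\ref{KMS>beta}\,\eqref{char:eqn} implies that $\phi(s_x s_x^* u_g) = 0$ unless $g\cdot x = x$, in which case it equals $e^{-\beta}\phi(u_{g|_x})$. This produces the recursion
\[
\phi(u_g) = \phi(Pu_g) + e^{-\beta}\sum_{\{x \in X \,:\, g\cdot x = x\}} \phi(u_{g|_x}).
\]
Writing $F_k(g) := \{y \in X^k : g\cdot y = y\}$, iterating this identity $n$ times and using the bijection $(y,x) \leftrightarrow yx$ between $\{(y,x) : y \in F_k(g),\; x \in F_1(g|_y)\}$ and $F_{k+1}(g)$, together with $(g|_y)|_x = g|_{yx}$, gives
\[
\phi(u_g) = \sum_{k=0}^n e^{-\beta k} \sum_{y \in F_k(g)} \phi(Pu_{g|_y}) + R_n, \qquad |R_n| \leq (|X|e^{-\beta})^{n+1}.
\]
Letting $n\to\infty$ reproduces exactly the formula \eqref{repn_state} for $\psi_{\beta,\tau_\phi}(u_g)$. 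The same telescoping applied to $\psi_{\beta,\tau}$ itself yields $\tau_{\psi_{\beta,\tau}} = \tau$, so $\tau \mapsto \psi_{\beta,\tau}$ is also injective; and weak$^*$-continuity of the inverse $\phi \mapsto \tau_\phi$ is immediate from its definition, since $\phi \mapsto \phi(Pu_g)$ is weak$^*$-continuous on states. The main technical obstacle is the combinatorial telescoping in the last display — one has to track the nested fixed-point sets $F_k(g)$ and the restriction identities carefully — but it is essentially the computation in Theorem~\ref{repn_hilbert} run in reverse.
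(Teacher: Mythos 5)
Your argument is correct, and it shares the paper's central idea: recover the trace by compressing a KMS$_\beta$ state to the corner determined by $P=1-\sum_{x\in X}s_xs_x^*$ (your $\tau_\phi$ agrees on $C^*(G)$ with the paper's $\phi_P$, since $P$ commutes with the $u_g$ and $P^2=P$), and then use Proposition~\ref{KMS>beta}\,\eqref{idb} to reduce everything to the values $\phi(u_g)$. Where you genuinely diverge is in how you reconstruct $\phi(u_g)$ from $\tau_\phi$. The paper proves a general reconstruction formula $\phi(a)=(1-|X|e^{-\beta})\sum_j\sum_{v\in X^j}e^{-\beta j}\phi_P(s_v^*as_v)$ (Lemma~\ref{Conditioning_State}), which requires showing that the projections $p_n=\sum_{j\le n}\sum_{v\in X^j}s_vPs_v^*$ are mutually orthogonal sums with $\phi(p_n)\to1$ (Lemma~\ref{Key_lemma}); you instead derive the one-step identity $\phi(u_g)=\phi(Pu_g)+e^{-\beta}\sum_{\{x\,:\,g\cdot x=x\}}\phi(u_{g|_x})$ directly from $1=P+\sum_xs_xs_x^*$ and \eqref{char:eqn}, and iterate it with the explicit remainder bound $(|X|e^{-\beta})^{n+1}$. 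This avoids the $p_n$ machinery entirely, at the (negligible) cost of only recovering $\phi$ on $C^*(G)$ rather than on all of $\TT(G,X)$ --- which is all that is needed. Your one-step recursion is, in effect, the same identity the paper derives from \eqref{repn_state} in its injectivity argument (equation \eqref{tau}), obtained algebraically rather than from the explicit formula; and indeed your verification that $\tau_{\psi_{\beta,\tau}}=\tau$ silently repeats that computation, so you should make that step explicit rather than calling it ``the same telescoping.'' Finally, you obtain the homeomorphism by exhibiting an explicitly weak$^*$-continuous inverse $\phi\mapsto\tau_\phi$, whereas the paper gets it for free from compactness of the two simplices; both are fine, and yours has the small advantage of not needing the compactness of the KMS$_\beta$ simplex.
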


For the proof, we need some lemmas. As in \cite[\S10]{lr} and \cite[\S7]{lrr}, the idea is to  show that a KMS$_\beta$ state can be reconstructed from its conditioning to a corner $P \TT(G,X) P$. Here we take
\[ P:= 1-\sum_{x \in X} s_x s_x^*\in \TT(G,X).\]

\begin{lemma}\label{condtrace}
Suppose that $\phi$ is a KMS$_\beta$ state,  and define $\phi_P:\TT(G,X)\to \C$ by
\[ \phi_P(a) = \frac{1}{1-|X|e^{-\beta}} \phi(P a P). \]
Then $\phi_P|_{C^*(G)}$ is a normalised trace. 
\end{lemma}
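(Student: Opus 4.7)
The plan is to verify normalisation and then to check the trace identity $\phi_P(u_gu_h) = \phi_P(u_hu_g)$ for all $g,h \in G$; by density this is enough to conclude that $\phi_P|_{C^*(G)}$ is tracial. I note at the outset that both $P$ and each $u_g$ are $\sigma$-invariant (the former because each summand $s_xs_x^*$ is $\sigma$-fixed), so by applying the KMS condition with these elements I obtain $\phi(cb) = \phi(bc)$ whenever $c$ is one of $P$, $u_g$. In particular $\phi(PaP) = \phi(aP^2) = \phi(aP) = \phi(Pa)$ for every $a \in \TT(G,X)$, which lets me drop one copy of $P$ in all subsequent calculations.

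For normalisation, the KMS condition gives $\phi(s_xs_x^*) = e^{-\beta}\phi(s_x^*s_x) = e^{-\beta}$ for every $x \in X$, whence $\phi(P) = 1 - |X|e^{-\beta}$ and $\phi_P(1) = 1$. Positivity of $\phi_P$ is automatic from $Pa^*aP = (aP)^*(aP)$.

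For the trace identity, $\phi(u_{gh}) = \phi(u_g u_h) = \phi(u_h u_g) = \phi(u_{hg})$ follows from the trace identity on $\sigma$-fixed elements, so it remains to show $\phi(Pu_{gh}) = \phi(Pu_{hg})$. Expanding $P = 1 - \sum_x s_xs_x^*$ and using the relation $u_{gh}s_x = s_{(gh)\cdot x}u_{(gh)|_x}$ together with Proposition~\ref{KMS>beta}(\ref{idb}), I compute
\[
\phi(Pu_{gh}) = \phi(u_{gh}) - e^{-\beta}\!\sum_{\{x\,:\,(gh)\cdot x = x\}}\phi(u_{(gh)|_x}),
\]
and the analogous formula with $g,h$ swapped. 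To match the remaining sums I use the bijection $x \mapsto h\cdot x$ from $\{x : (gh)\cdot x = x\}$ onto $\{y : (hg)\cdot y = y\}$. When $(gh)\cdot x = x$, so $g\cdot(h\cdot x) = x$, Lemma~\ref{lem:props}(ii) gives $(gh)|_x = g|_{h\cdot x}h|_x$ while $(hg)|_{h\cdot x} = h|_{g\cdot(h\cdot x)}g|_{h\cdot x} = h|_{x}g|_{h\cdot x}$; a second application of the trace identity on $\sigma$-invariants then yields $\phi(u_{(gh)|_x}) = \phi(u_{(hg)|_{h\cdot x}})$. Summing over the matched pairs finishes the proof. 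I expect this bijection-and-restriction matching to be the only nontrivial ingredient, but it is essentially the argument already used at the end of the proof of Theorem~\ref{repn_hilbert}, so it should be straightforward to transcribe.
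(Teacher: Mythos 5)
Your argument is correct, but it takes a different route from the paper's. The paper's proof rests on a single structural observation: $u_g$ commutes with $P$, because $u_g s_x s_x^* u_g^* = s_{g\cdot x}s_{g\cdot x}^*$ and $x\mapsto g\cdot x$ permutes $X$, so $u_g\bigl(\sum_x s_xs_x^*\bigr)u_g^* = \sum_x s_xs_x^*$. Granting this, $\phi(Pu_gu_hP)=\phi(u_g\,Pu_h)=\phi(Pu_h\,\sigma_{i\beta}(u_g))=\phi(Pu_hu_gP)$ and the trace identity is immediate, with no need for the explicit vanishing/value formula \eqref{char:eqn} or for any bookkeeping of restrictions. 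You instead expand $P=1-\sum_x s_xs_x^*$, invoke \eqref{char:eqn} to reduce $\phi(Pu_{gh})$ to a sum over the fixed points of $gh$ in $X$, and then match that sum with the one for $hg$ via the bijection $x\mapsto h\cdot x$ together with the cocycle identity $(gh)|_x=g|_{h\cdot x}h|_x$ and the trace property \eqref{phi_trace}; all of these steps check out (in particular $(hg)|_{h\cdot x}=h|_{g\cdot(h\cdot x)}g|_{h\cdot x}=h|_xg|_{h\cdot x}$ is right, since $(gh)\cdot x=x$ forces $g\cdot(h\cdot x)=x$). As you note, this is exactly the matching argument the paper uses at the end of the proof of Theorem~\ref{repn_hilbert} to show $\psi_{\beta,\tau}(u_gu_h)=\psi_{\beta,\tau}(u_hu_g)$, so your proof recycles existing machinery at the cost of being more computational; the paper's commutation lemma is shorter and, in effect, explains \emph{why} your two sums must agree (since $Pu_{gh}=u_{gh}P$, the equality $\phi(Pu_{gh})=\phi(Pu_{hg})$ follows from one application of the KMS condition). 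Both arguments rely on the standard fact that the KMS condition, once verified on a spanning family of analytic elements, holds for arbitrary second entries; the paper uses this implicitly too, so your preliminary reduction $\phi(PaP)=\phi(Pa)$ is legitimate.
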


\begin{proof}
The function $\phi_P$ is a positive linear functional because $\phi$ is, and the computation
\begin{align*}
 \phi_P(1)&=  \frac{1}{1-|X|e^{-\beta}} \phi\Big(1-\sum_{x \in X} s_x s_x^*\Big)=\frac{1}{1-|X|e^{-\beta}}\Big(1-e^{-\beta}\sum_{x \in X}\phi(s_x^* s_x)\Big)\\
 &=\frac{1-|X|e^{-\beta}}{1-|X|e^{-\beta}}=1
 \end{align*}
shows that $\phi_P$ is a state. 

With a view to proving  that $\phi_P$ is tracial on $C^*(G)$, we claim that $u_g P=P u_g$. Indeed, for $x\in X$ and $g\in G$ we have
\[
u_gs_xs_x^*=u_gs_xs_x^*u_g^*u_g=(u_gs_x)(u_gs_x)^*u_g=(s_{g\cdot x}u_{g|_x})(s_{g\cdot x}u_{g|_x})^*u_g=s_{g\cdot x}s_{g\cdot x}^*u_g.
\]
Thus for $g\in G$, we have
\begin{align*}
u_g P &= u_g \Big(1-\sum_{x \in X} s_x s_x^*\Big) = u_g - \sum_{x \in X} u_g s_x s_x^*\\
&= u_g - \sum_{x \in X} s_{g \cdot x} s_{g \cdot x}^* u_g =\Big(1 - \sum_{x \in X} s_{g \cdot x} s_{g \cdot x}^*\Big) u_g,
\end{align*}
and since $1-\sum_{x \in X} s_{g \cdot x} s_{g \cdot x}^*=P$ we get $u_gP=Pu_g$, as claimed. Now since $\phi$ is a KMS$_\beta$ state, we have
\[
\phi(Pu_gu_hP)=\phi(u_gPu_h)=\phi(Pu_h\sigma_{i\beta}(u_g))=\phi(Pu_hu_g)=\phi(Pu_hu_gP),
\]
which implies that $\phi_P|_{C^*(G)}$ is a trace.
\end{proof}

\begin{lemma}\label{Key_lemma}
Let $(G,X)$ be a self-similar action.
For each $n \in \N$, the element
\[ p_n := \sum_{j=0}^n \sum_{v \in X^j} s_v P s_v^* \]
is a projection in $\TT(G,X)$, and if $\phi$ is a KMS$_\beta$ state of $(\TT(G,X),\sigma)$ and $a \in \TT(G,X)$, then $\phi(p_n a p_n) \rightarrow \phi(a)$ as $n \rightarrow \infty$.
\end{lemma}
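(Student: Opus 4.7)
The plan is to reduce $p_n$ to a concrete projection via a telescoping identity, use the KMS condition to show that $1-p_n$ has exponentially small expectation, and then invoke the Cauchy--Schwarz inequality for states.

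First I would establish the identity
\[
p_n \;=\; 1 - \sum_{v \in X^{n+1}} s_v s_v^*.
\]
For each $j \geq 0$, direct expansion using $P = 1 - \sum_{x \in X} s_x s_x^*$ gives
\[
\sum_{v \in X^j} s_v P s_v^* \;=\; \sum_{v \in X^j} s_v s_v^* \;-\; \sum_{w \in X^{j+1}} s_w s_w^*,
\]
and summing over $j = 0, 1, \dots, n$ telescopes to the claim (recall $\sum_{v \in X^0} s_v s_v^* = 1$). Since $\{s_w : w \in X^{n+1}\}$ is a Toeplitz--Cuntz family (by the remark following Proposition~\ref{Toeplitz}), $\sum_{w \in X^{n+1}} s_w s_w^*$ is a projection, hence so is $p_n$.

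Next, the spanning element $s_v$ is analytic for $\sigma$ with $\sigma_{i\beta}(s_v) = e^{-\beta|v|} s_v$, so the KMS$_\beta$ condition yields $\phi(s_v s_v^*) = \phi(s_v^* \sigma_{i\beta}(s_v)) = e^{-\beta|v|}\phi(s_v^* s_v) = e^{-\beta|v|}$ for every $v \in X^*$. Summing over $v \in X^{n+1}$,
\[
\phi(1 - p_n) \;=\; \sum_{v \in X^{n+1}} \phi(s_v s_v^*) \;=\; \bigl(|X| e^{-\beta}\bigr)^{n+1},
\]
which tends to $0$ as $n \to \infty$ in the regime $\beta > \log|X|$ where the lemma will be applied in Theorem~\ref{Conditioning}. (At the critical value $\beta = \log|X|$ this sum is identically $1$, reflecting the fact that any KMS$_{\log|X|}$ state factors through $\OO(G,X)$ and annihilates every $p_n$.)

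Finally I would decompose
\[
\phi(a) - \phi(p_n a p_n) \;=\; \phi\bigl((1 - p_n) a\bigr) + \phi\bigl(p_n a (1 - p_n)\bigr)
\]
and apply the Cauchy--Schwarz inequality for states, $|\phi(xy)|^2 \leq \phi(xx^*)\phi(y^*y)$. Taking $x = 1-p_n$, $y = a$ bounds $|\phi((1-p_n)a)|^2 \leq \phi(1-p_n)\,\|a\|^2$; taking $x = p_n a$, $y = 1-p_n$ bounds $|\phi(p_n a (1-p_n))|^2 \leq \|a\|^2\,\phi(1-p_n)$. Both bounds vanish as $n \to \infty$, giving $\phi(p_n a p_n) \to \phi(a)$. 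I do not foresee a genuine obstacle: once one spots the telescoping identity, everything reduces to a routine KMS-plus-Cauchy--Schwarz argument in the spirit of \cite{lrr}.
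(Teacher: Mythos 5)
Your proposal is correct, and it reaches the conclusion by a slightly different route than the paper. For the projection claim, the paper shows directly that the summands $s_vPs_v^*$ are mutually orthogonal projections, via the computation $Ps_v^*s_wP=0$ for $v\neq w$; you instead telescope to the closed form $p_n=1-\sum_{v\in X^{n+1}}s_vs_v^*$, which is cleaner and has the added benefit of giving the exact value $\phi(1-p_n)=(|X|e^{-\beta})^{n+1}$ in one line. (The paper's orthogonality computation is not wasted, though: it is reused in the proof of the reconstruction formula of Lemma~\ref{Conditioning_State} to kill the cross terms in $\phi(p_nap_n)$, so you would still need it there.) For the convergence claim, the paper computes $\phi(p_n)\to 1$ and cites \cite[Lemma~7.3]{lrr}, whose proof is exactly the Cauchy--Schwarz argument you carry out inline; so that part is the same argument, just unpacked. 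Your parenthetical remark about $\beta=\log|X|$ is a genuine and worthwhile observation: at the critical inverse temperature $\phi(P)=0$, every $p_n$ is annihilated, and the convergence $\phi(p_nap_n)\to\phi(a)$ fails (take $a=1$); the paper's own proof has the same implicit restriction, since the geometric series $(1-r)\sum_{j=0}^nr^j$ converges to $1$ only for $r=|X|e^{-\beta}<1$. The lemma is only ever invoked under the hypothesis $\beta>\log|X|$ of Lemma~\ref{Conditioning_State}, so nothing downstream is affected, but you are right that the statement should carry that hypothesis.
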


\begin{proof}
Each $s_v P s_v^*$ is a projection, so we need to show that $s_v P s_v^*$ and $s_w P s_w^*$ are mutually orthogonal when $v \neq w$. Since $\{s_z:z\in X^m\}$ is a Toeplitz-Cuntz family for each $m$, this is trivially true for $|v|=|w|$. So suppose $|v|\not=|w|$. The product $Ps_v^*s_wP$ vanishes unless $v=wv'$ or $w=vw'$; since $(Ps_v^*s_wP)^*=Ps_w^*s_vP$, we may as well assume that $|w|> |v|$ and $w=vw'$. Then, writing $w'_1$ for the first letter in $w'$, we have
\begin{equation}\label{mutorthog}
P s_v^* s_w P=Ps_{w'}P=\Big(1-\sum_{x\in X}s_xs_x^*\Big)s_{w'}P=s_{w'}P-s_{w'_1}s_{w'_1}^*s_{w'}P=0.
\end{equation}
Thus each $p_n$ is a projection.

Lemma~7.3 of \cite{lrr} says that if $\phi$ is a state of a unital $C^*$-algebra $A$, and $\{p_n\}$ is a sequence of projections in $A$ such that $\phi(p_n) \rightarrow 1$, then $\phi(p_n a p_n) \rightarrow \phi(a)$ for every $a \in A$. So we aim to show that $\phi(p_n) \rightarrow 1$ as $n \rightarrow \infty$. The KMS condition gives
\begin{align*}
\phi(p_n)&= \sum_{j=0}^n \sum_{v \in X^j} \phi(s_v P s_v^*) = \sum_{j=0}^n \sum_{v \in X^j} e^{-\beta j}\phi(P) \\
&= \phi(P) \sum_{j=0}^n (|X|e^{-\beta})^j 
= (1-|X|e^{-\beta}) \sum_{j=0}^n (|X|e^{-\beta})^j,
\end{align*}
which converges to $1$ as $n\to \infty$. Thus the result follows from \cite[Lemma~7.3]{lrr}
\end{proof}

The following reconstruction formula is an analogue of \cite[Proposition 7.2]{lrr}.

\begin{lemma}\label{Conditioning_State}
Suppose $\beta > \log |X|$ and $\phi$ is a KMS$_\beta$ state on $\TT(G,X)$. Then for $a \in \TT(G,X)$,
\begin{equation}\label{Conditioning_definition}
\phi(a) =(1-|X|e^{-\beta}) \sum_{j=0}^\infty \sum_{v \in X^j} e^{-\beta j} \phi_P(s_v^* a s_v).
\end{equation}
\end{lemma}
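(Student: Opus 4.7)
The plan is to compute $\phi(p_n a p_n)$ in closed form and then let $n\to\infty$, invoking Lemma~\ref{Key_lemma} to identify the limit with $\phi(a)$. Expanding both copies of $p_n = \sum_{j=0}^n \sum_{v \in X^j} s_v P s_v^*$ reduces the problem to evaluating the summands
\[
\phi(s_v P s_v^*\, a\, s_w P s_w^*) \quad \text{for } v \in X^j,\ w \in X^k,\ j,k \leq n.
\]

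For the off-diagonal vanishing, the key observation is that every projection $s_v P s_v^*$ is $\sigma$-fixed (it has gauge weight $|v|-|v|=0$), hence entire-analytic with $\sigma_{i\beta}(s_v P s_v^*) = s_v P s_v^*$. The KMS condition $\phi(xy) = \phi(y\sigma_{i\beta}(x))$, applied with $x=s_v P s_v^*$ in the analytic slot and $y = a\, s_w P s_w^*$ arbitrary, therefore yields
\[
\phi(s_v P s_v^*\, a\, s_w P s_w^*) = \phi(a\, s_w P s_w^* s_v P s_v^*).
\]
For $v \neq w$, the orthogonality computation \eqref{mutorthog} in the proof of Lemma~\ref{Key_lemma} (together with its adjoint in the case $|v|>|w|$, and the trivial case $|v|=|w|$ with $v\neq w$) shows that $P s_w^* s_v P = 0$, so the summand vanishes. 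This is the main subtlety of the argument: because $a$ need not be analytic, the KMS identity can only be invoked with an analytic element in one of the two slots, and it is the gauge-triviality of $s_v P s_v^*$ that makes this work for arbitrary $a$.

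For the surviving diagonal terms, I apply the KMS condition with the analytic element $s_v$, for which $\sigma_{i\beta}(s_v) = e^{-\beta|v|}s_v$. Combined with $s_v^* s_v = 1$ this gives
\[
\phi(s_v P s_v^*\, a\, s_v P s_v^*) = e^{-\beta|v|}\,\phi(P s_v^*\, a\, s_v P) = e^{-\beta|v|}(1-|X|e^{-\beta})\,\phi_P(s_v^* a s_v),
\]
where the last equality is the definition of $\phi_P$ from Lemma~\ref{condtrace}. Summing over $v \in X^j$ and $0\leq j\leq n$ shows that $\phi(p_n a p_n)$ is exactly the $n$-th partial sum on the right-hand side of \eqref{Conditioning_definition}. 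Letting $n\to\infty$ and applying Lemma~\ref{Key_lemma} completes the proof; convergence of the series is automatic since its partial sums converge to $\phi(a)$.
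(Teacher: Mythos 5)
Your proof is correct and follows essentially the same route as the paper: expand $\phi(p_n a p_n)$ using Lemma~\ref{Key_lemma}, kill the off-diagonal terms via the orthogonality relation \eqref{mutorthog}, extract the factor $e^{-\beta j}$ from the diagonal terms by applying the KMS condition to the analytic element $s_v$ (with the arbitrary element $a$ in the non-analytic slot, exactly as the paper does), and identify the result with $\phi_P$. The only cosmetic difference is the order of operations — you dispose of the cross terms first using the gauge-fixed projection $s_vPs_v^*$, whereas the paper first moves $s_vP$ around the trace and then invokes \eqref{mutorthog} — but the ingredients are identical.
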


\begin{proof}
Lemma \ref{Key_lemma} gives
\begin{align*}
\phi(a) = \lim_{n \to \infty} \phi(p_n a p_n)
&= \lim_{n \to \infty} \sum_{j=0}^n \sum_{l=0}^n \sum_{v \in X^j} \sum_{w \in X^l} \phi(s_v P s_v^* a s_w P s_w^*) \\
&= \lim_{n \to \infty} \sum_{j=0}^n \sum_{l=0}^n \sum_{v \in X^j} \sum_{w \in X^l} e^{-\beta j} \phi(P s_v^* a s_w P s_w^*s_v P) \\
&= \lim_{n \to \infty} \sum_{j=0}^n \sum_{v \in X^j} e^{-\beta j} \phi(P s_v^* a s_v P)\quad\text{(using \eqref{mutorthog}\,)}\\
&=(1-|X|e^{-\beta})\sum_{j=0}^\infty \sum_{v \in X^j} e^{-\beta j} \phi_P(s_v^* a s_v).\qedhere
\end{align*}
\end{proof}

\begin{proof}[Proof of Theorem \ref{Conditioning}]
By an application of the monotone convergence theorem, we can deduce from \eqref{repn_state} that $\tau\mapsto \psi_{\beta,\tau}$ is affine and weak$^*$ continuous. Since both sets of states are weak$^*$ compact, it suffices to show that $\tau\mapsto \psi_{\beta,\tau}$ is bijective.

To see injectivity, suppose that $\psi_{\beta,\tau}=\psi_{\beta,\rho}$, and take $g \in G$. Then the formula \eqref{repn_state} gives
\begin{align*}
\psi_{\beta,\tau}(u_g)&=(1-|X|e^{-\beta}) \sum_{j=0}^\infty \,\, \sum_{\{y \in X^{j}\,:\,g \cdot y=y\}} e^{-\beta j}\tau(\delta_{g|_{y}}) \\
&=(1-|X|e^{-\beta})\tau(\delta_g) +(1-|X|e^{-\beta})\sum_{k=0}^\infty \,\, \sum_{\{y \in X^{k+1}\,:\,g \cdot y=y\}} e^{-\beta (k+1)}\tau(\delta_{g|_{y}}).
\end{align*}
We can write the index set for the last sum as 
\[
\{y\in X^{k+1}:g\cdot y=y\}=\{xy':x\in X,\; y'\in X^k,\; g\cdot x=x,\; g|_x\cdot y'=y'\},
\]
and then another application of \eqref{repn_state} gives
\begin{align*}
\sum_{k=0}^\infty \,\, \sum_{\{y \in X^{k+1}\,:\,g \cdot y=y\}} e^{-\beta (k+1)}\tau(\delta_{g|_{y}})&=e^{-\beta}\sum_{k=0}^\infty \,\, \sum_{\{x \in X \,:\,g \cdot x=x\}} \,\, \sum_{\{y' \in X^{k}\,:\,g|_x \cdot y'=y'\}} e^{-\beta k}\tau(\delta_{(g|_x)|_{y'}})\\
&=\frac{e^{-\beta}}{1-|X|e^{-\beta}}\Big(\sum_{\{x \in X \,:\,g\cdot x=x\}} \,\, \psi_{\beta,\tau}(u_{g|_{x}})\Big).
\end{align*}
Thus 
\begin{equation}\label{tau}
\psi_{\beta,\tau}(u_g)=(1-|X|e^{-\beta})\tau(\delta_h) + e^{-\beta}\Big(\sum_{\{x \in X \,:\,g\cdot x=x\}} \,\, \psi_{\beta,\tau}(u_{g|_{x}})\Big).
\end{equation}
Similarly, we have
\begin{equation}\label{rho}
\psi_{\beta,\rho}(u_g) = (1-|X|e^{-\beta})\rho(\delta_g) + e^{-\beta}\Big(\sum_{\{x \in X \,:\,g\cdot x=x\}} \,\, \psi_{\beta,\rho}(u_{g|_{x}})\Big).
\end{equation}
Since $\psi_{\beta,\tau}= \psi_{\beta,\rho}$, subtracting \eqref{tau} from \eqref{rho} shows that $\tau(\delta_g)=\rho(\delta_g)$. Thus $\tau=\rho$, and $\tau\mapsto \psi_{\beta, \tau}$ is injective.

To see surjectivity, suppose that $\phi$ is a KMS$_\beta$ state on $\TT(G,X)$. Lemma~\ref{condtrace} implies that $\tau:=\phi_P|_{C^*(G)}$ is a normalised trace, and we aim to show that $\phi=\psi_{\beta,\tau}$. By \eqref{char:eqn}, it suffices to show that $\phi(u_g)=\psi_{\beta,\tau}(u_g)$ for all $g \in G$. Fix $g\in G$. Then the reconstruction formula \eqref{Conditioning_definition} gives
\begin{align}
\notag
\phi(u_g)&=(1-|X|e^{-\beta})\sum_{j=0}^\infty \sum_{y \in X^j} e^{-\beta j} \phi_P(s_y^* u_g s_y) \\
\notag
&=(1-|X|e^{-\beta})\sum_{j=0}^\infty \sum_{y \in X^j} e^{-\beta j} \phi_P(s_y^*  s_{g \cdot y} u_{g|_{y}}) \\
\notag
&=\lim_{n \to \infty} (1-|X|e^{-\beta})\sum_{j=0}^n \sum_{\{y \in X^j\,:\, g \cdot y=y\}} e^{-\beta j} \tau(u_{g|_{y}}),
\end{align}
 which by \eqref{repn_state} is precisely $\psi_{\beta,\tau}(u_g)$. Thus $\phi=\psi_{\beta,\tau}$, and $\tau\mapsto \psi_{\beta, \tau}$ is surjective.
\end{proof}

For every discrete group $G$, there are at least two normalised traces on $C^*(G)$. The usual trace $\tau_e$ on $C^*(G)$ satisfies
\[
\tau_e(\delta_g)=\begin{cases}
1&\text{if $g=e$}\\
0&\text{otherwise.}
\end{cases}
\]
To see that there is such a trace, consider the left-regular representation $\lambda$ of $G$ on $\ell^2(G)$, and define $\tau_e:C^*(G)\to \C$ in terms of the usual orthonormal basis $\{\xi_g:g\in G\}$ by $\tau_e(a)=(\lambda(a)\xi_e\,|\,\xi_e)$. Then it is easy to check on $\newspan\{\delta_g\}$ that $\tau_e$ has the required properties, and continuity does the rest. The other trace is the integrated form $\tau_1:C^*(G)\to \C$ of the trivial representation $g\mapsto 1$, which is a scalar-valued homomorphism, and hence is trivially a trace.

Since $\tau_e$ and $\tau_1$ do not agree on the $\delta_g$ with $g\not= e$, they are distinct traces, and hence by Theorem~\ref{Conditioning} give distinct KMS states. We look at these states.

\begin{cor}\label{usualtr}
Suppose that $(G,X)$ is a self-similar action and $\beta>\log|X|$. For $g\in G$ and $k\geq 0$, we set
\begin{equation}\label{defFgj}
F_g^k := \{v \in X^k : g \cdot v = v \text{ and } g|_v=e \}.
\end{equation}
Then there is a KMS$_\beta$ state $\psi_{\beta,\tau_e}$ on $(\TT(G,X),\sigma)$ such that
\[
\psi_{\beta,\tau_e}(s_v u_g s_w^*)=\begin{cases}
e^{-\beta|v|}(1-|X|e^{-\beta}) \displaystyle{\sum_{k=0}^\infty e^{-\beta k}|F^k_g|}& \text{if $v=w$}\\
0&\text{otherwise,}
\end{cases}
\]
where we interpret $|\varnothing|$ as $0$.
\end{cor}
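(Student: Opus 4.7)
The plan is simply to specialise Theorem~\ref{repn_hilbert} to the trace $\tau_e$. First I would verify that $\tau_e$ defined by $\tau_e(\delta_g)=\delta_{g,e}$ is a normalised trace on $C^{*}(G)$ (this is justified in the paragraph preceding the corollary via the matrix coefficient of the left-regular representation at the canonical basis vector $\xi_e$), so that Theorem~\ref{repn_hilbert} produces a KMS$_\beta$ state $\psi_{\beta,\tau_e}$ for every $\beta>\log|X|$.

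Next I substitute $\tau=\tau_e$ into the formula \eqref{repn_state}. The key observation is that for any $h\in G$, $\tau_e(\delta_h)=1$ if $h=e$ and $0$ otherwise, so for each $k\geq 0$ the inner sum
\[
\sum_{\{y\in X^{k}\,:\,g\cdot y=y\}}\tau_e(\delta_{g|_{y}})
\]
collapses to the cardinality of the set of $y\in X^{k}$ satisfying the two conditions $g\cdot y=y$ and $g|_{y}=e$ simultaneously, which is exactly $|F_g^k|$ by the definition \eqref{defFgj}. Pulling the common factor $e^{-\beta|v|}$ out of the sum over $k$ yields the stated formula on the diagonal $v=w$, and vanishing off the diagonal is immediate from Theorem~\ref{repn_hilbert}.

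No step here is a real obstacle: the entire argument is a direct substitution into an already-proved formula, together with the elementary identification of the support of $\tau_e$. The only tiny point worth flagging is the $k=0$ term, where $X^{0}=\{\varnothing\}$, $g\cdot\varnothing=\varnothing$ and $g|_{\varnothing}=g$, so that $|F_g^{0}|=1$ if $g=e$ and $0$ otherwise; the convention $|\varnothing|=0$ (interpreted as the length of the empty word) ensures the factor $e^{-\beta|v|}$ reduces to $1$ when $v=\varnothing$, matching the reconstructed value $\psi_{\beta,\tau_e}(u_g)$.
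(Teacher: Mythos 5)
Your proposal is correct and follows exactly the paper's own argument: invoke Theorem~\ref{repn_hilbert} for the trace $\tau_e$, observe that the inner sum in \eqref{repn_state} collapses to $|F_g^k|$ because $\tau_e(\delta_h)=\delta_{h,e}$, and pull out the factor $e^{-\beta|v|}$. The only quibble is your closing remark: the convention ``$|\varnothing|=0$'' in the statement is most naturally read as the cardinality of the empty set $F_g^k$ rather than the length of the empty word, but this has no bearing on the correctness of the argument.
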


\begin{proof}
The state $\psi_{\beta,\tau_e}$ is the one given by Theorem~\ref{repn_hilbert}, so we just need to check the formula for $\psi_{\beta,\tau_e}(s_v u_g s_w^*)$. It is certainly $0$ if $v\not= w$, so we suppose $v=w$. Then since $\tau_e(\delta_e)=1$ and $\tau_e(\delta_{h})=0$ for $h\not= e$, the sum on the right-hand side of \eqref{repn_state} collapses to give
\begin{equation*}
\psi_{\beta,\tau_e}(s_v u_g s_v^*)=
(1-|X|e^{-\beta}) \displaystyle{\sum_{k=0}^\infty e^{-\beta(k+|v|)} \Big(\sum_{y\in F_g^k}} 1\Big),
\end{equation*}
which on pulling out $e^{-\beta|v|}$ gives the required formula.
\end{proof}

\begin{cor}\label{trvialrep}
Suppose that $(G,X)$ is a self-similar action and $\beta>\log|X|$. For $g\in G$ and $k\geq 0$, we set
\begin{equation}\label{defGgj}
G_g^k := \{v \in X^k : g \cdot v = v\}.
\end{equation}
Then there is a KMS$_\beta$ state $\psi_{\beta,\tau_1}$ on $(\TT(G,X),\sigma)$ such that
\[
\psi_{\beta,\tau_1}(s_v u_g s_w^*)=\begin{cases}
e^{-\beta|v|}(1-|X|e^{-\beta}) \displaystyle{\sum_{k=0}^\infty e^{-\beta k}|G^k_g|}& \text{if $v=w$}\\
0&\text{otherwise.}
\end{cases}
\]
\end{cor}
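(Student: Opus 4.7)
The plan is to apply Theorem~\ref{repn_hilbert} with the normalised trace $\tau = \tau_1$ obtained as the integrated form of the trivial representation $g \mapsto 1$ of $G$. This trace is well-defined on $C^*(G)$ because the trivial representation is a (one-dimensional, hence trivially tracial) $*$-representation, and it satisfies $\tau_1(\delta_h) = 1$ for every $h \in G$.

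Given this, Theorem~\ref{repn_hilbert} immediately produces a KMS$_\beta$ state $\psi_{\beta,\tau_1}$ on $(\TT(G,X),\sigma)$ for every $\beta > \log|X|$, and the formula \eqref{repn_state} specialises as follows. For $v \neq w$, the value $\psi_{\beta,\tau_1}(s_v u_g s_w^*) = 0$ is automatic. For $v = w$, since $\tau_1(\delta_{g|_y}) = 1$ for every $y$, the inner sum in \eqref{repn_state} collapses to a simple count:
\[
\sum_{\{y \in X^k \,:\, g \cdot y = y\}} \tau_1(\delta_{g|_y}) = \bigl|\{y \in X^k \,:\, g \cdot y = y\}\bigr| = |G_g^k|.
\]
Substituting this into \eqref{repn_state} and pulling the factor $e^{-\beta|v|}$ outside the sum over $k$ yields exactly the stated formula.

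There is essentially no obstacle here: the only thing to verify is that $\tau_1$ is indeed a trace on $C^*(G)$, and this is immediate because $\tau_1$ is a scalar-valued $*$-homomorphism, so $\tau_1(ab) = \tau_1(a)\tau_1(b) = \tau_1(b)\tau_1(a) = \tau_1(ba)$. Everything else is just bookkeeping in the specialisation of Theorem~\ref{repn_hilbert}.
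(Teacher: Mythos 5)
Your proof is correct and follows exactly the paper's route: the paper likewise notes that $\tau_1$, being a scalar-valued homomorphism, is trivially a (normalised) trace, and then observes that with $\tau=\tau_1$ the inner sum in \eqref{repn_state} just counts $|G_g^k|$, so the formula follows from Theorem~\ref{repn_hilbert} after pulling out $e^{-\beta|v|}$. Nothing is missing.
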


\begin{proof}
As in the proof of the previous corollary, the second sum on the right-hand side of \eqref{repn_state} counts the number of elements of $G_g^k$, and hence this follows from Theorem~\ref{repn_hilbert}.
\end{proof}

Although the formulas in the last two corollaries look a bit messy, they are quite computable, and we will later discuss ways of doing these computations using Moore diagrams. But it is easy to give a quick example now.

\begin{example}\label{Toebas}
Consider the basilica group $(B,X=\{x,y\})$ of \S\ref{Basilica}. The first two relations in \eqref{basilica_action} imply that the generator $a$ changes the first letter of every word, so $F_a^k=G_a^k=\varnothing$ for every $k\geq 1$, and $\psi_{\beta,\tau_e}(\delta_a)=\psi_{\beta,\tau_1}(\delta_a)=0$. On the other hand, $b$ fixes $x$ with $b|_x=a$, and hence satisfies $b\cdot(xw)\not=xw$ for every longer word $xw$. Thus $F_b^k=\{yw:w\in X^*\}$ and $G_b^k=\{yw:w\in X^*\}\cup\{x\}$. We deduce that
\begin{align*}
\psi_{\beta,\tau_e}(\delta_b)&=(1-2e^{-\beta})\sum_{k=0}^\infty e^{-\beta k}2^{k-1}={\textstyle{\frac{1}{2}}}(1-2e^{-\beta})\sum_{k=0}^\infty (2e^{-\beta })^k={\textstyle\frac{1}{2}},\text{ and}\\
\psi_{\beta,\tau_1}(\delta_b)&=(1-2e^{-\beta})\sum_{k=0}^\infty e^{-\beta k}(2^{k-1}+1)={\textstyle\frac{1}{2}}+\frac{1-2e^{-\beta}}{1-e^{-\beta}}.
\end{align*}
\end{example}

\begin{remark}
When the group $G$ is abelian, the normalised traces on $C^*(G)\cong C(\hat G)$ are given by probability measures on the compact dual group $\hat G$. Thus in \cite{lrr} (see also \S\ref{checklrr} below), the KMS states with inverse temperature $\beta>\beta_c$ on $(\TT(\Z^d,\Sigma),\sigma)$ are parametrised by the probability measures on $\T^d$. 

When $G$ has an abelian quotient $Q$, $C^*(Q)$ is a quotient of $C^*(G)$, and the probability measures on $\hat Q$ give traces on $C^*(G)$ and KMS states on $(\TT(G,X),\sigma)$. This applies in particular to the self-similar action $(B,X)$ associated to the basilica group in \S\ref{Basilica}, since Proposition~\ref{quotB} implies that $B$ has a quotient isomorphic to $\Z^2$. Thus for each $\beta>\log|X|$, Theorem~\ref{Conditioning} gives a simplex $S_Q$ of KMS$_\beta$ states of $(\TT(B,X),\sigma)$ parametrised by the probability measures on $\hat Q=\T^2$. The simplex $S_Q$ includes the state $\psi_{\beta,\tau_1}$ of Corollary~\ref{trvialrep}, which corresponds to the point mass at $1\in \T^2$. However, since the trace $\tau_e$ does not factor through the quotient map, Theorem~\ref{Conditioning} implies that $S_Q$ does not include the state $\psi_{\beta,\tau_e}$ of Corollary~\ref{usualtr}.
\end{remark}

\section{KMS states at the critical inverse temperature}\label{KMS_CP_alg}

In this section we describe the KMS states on $\TT(G,X)$ at the critical inverse temperature $\beta_c=\log|X|$. We start by showing that we are effectively dealing with the KMS states on the Cuntz-Pimsner algebra $\OO(G,X)$.

\begin{prop}\label{factorthruCP}
Let $(G,X)$ be a self-similar action. Every KMS$_{\log |X|}$ state of $(\TT(G,X),\sigma)$ factors through a KMS$_{\log|X|}$ state on  $\OO(G,X)$.
\end{prop}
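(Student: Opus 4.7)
The plan is to exploit Corollary~\ref{CPquotient}, which identifies $\OO(G,X)$ with the quotient of $\TT(G,X)$ by the ideal $I$ generated by $P:=1-\sum_{x\in X}s_xs_x^*$. So it suffices to show that any KMS$_{\log|X|}$ state $\phi$ of $(\TT(G,X),\sigma)$ vanishes on $I$; the induced functional on $\OO(G,X)$ is then automatically a state, and the KMS property passes to it because the action $\sigma$ descends (each $\sigma_t$ fixes $P$) and because the analytic spanning elements $s_v u_g s_w^*$ descend to analytic spanning elements of $\OO(G,X)$.

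The first step is the standard KMS computation at the critical temperature. Using $\sigma_{i\beta}(s_x)=e^{-\beta}s_x$ in the KMS condition yields $\phi(s_x s_x^*)=e^{-\beta}\phi(s_x^* s_x)=e^{-\beta}$ for each $x\in X$, so $\phi(P)=1-|X|e^{-\beta}$. At $\beta=\log|X|$ this is exactly $0$. Since $P$ is a projection, $\phi(P^*P)=\phi(P)=0$.

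The second step propagates this to all of $I$ by a two-sided Cauchy--Schwarz argument. The set $N_\phi:=\{a\in\TT(G,X)\mid\phi(a^*a)=0\}$ is a closed left ideal containing $P$, so it contains $\TT(G,X)\cdot P$. For any $a,b\in\TT(G,X)$, Cauchy--Schwarz applied to the positive sesquilinear form $(x,y)\mapsto\phi(y^*x)$ gives
\[
|\phi(aPb)|^2 = |\phi((Pb)\cdot a^{*})^{*}|^{2} \leq \phi(aa^*)\,\phi((Pb)^*(Pb)) = \phi(aa^*)\,\phi(b^*Pb),
\]
and a second application gives $\phi(b^*Pb)^2\leq\phi(b^*b)\phi(b^*P^2b)=\phi(b^*b)\phi(b^*Pb)$, whence iteration (or the observation that $b^*Pb=(Pb)^*(Pb)$ makes $Pb\in N_\phi$) forces $\phi(aPb)=0$ for all $a,b$. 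Hence $\phi$ vanishes on the two-sided ideal generated by $P$, which is precisely $I$.

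I do not anticipate a genuine obstacle here: the whole argument rests on the single computation $\phi(P)=1-|X|e^{-\beta}$, which vanishes exactly at the critical temperature, together with the elementary Cauchy--Schwarz propagation. The only point that warrants a brief verification is that the descended state is still KMS$_{\log|X|}$, but this is immediate since the analytic generators $s_v u_g s_w^*$ map to analytic elements whose span is dense in $\OO(G,X)$, so the KMS relation~\eqref{KMS_iff} descends verbatim.
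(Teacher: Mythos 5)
Your overall strategy --- compute $\phi(P)=0$ at $\beta=\log|X|$ and then argue that $\phi$ kills the ideal $I$ generated by $P$, so that $\phi$ descends through the quotient of Corollary~\ref{CPquotient} --- is exactly the paper's, and your first step is correct. But the propagation step has a genuine gap: you try to get from $\phi(P)=0$ to $\phi(aPb)=0$ using Cauchy--Schwarz alone, and that cannot work. For a general state, $\phi(P)=0$ for a projection $P$ does \emph{not} imply that $\phi$ vanishes on the two-sided ideal generated by $P$: take $A=M_2(\C)$, $P=e_{11}$, and $\phi$ the vector state associated to $e_2$; then $\phi(P)=0$ but $\phi(e_{21}Pe_{12})=\phi(e_{22})=1$, even though the ideal generated by $P$ is all of $M_2(\C)$. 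Concretely, your chain of inequalities only delivers $|\phi(aPb)|^2\leq\phi(aa^*)\,\phi(b^*Pb)$ together with $\phi(b^*Pb)\leq\phi(b^*b)$; iterating the second inequality reproduces itself and never forces $\phi(b^*Pb)=0$. The parenthetical ``observation that $b^*Pb=(Pb)^*(Pb)$ makes $Pb\in N_\phi$'' is circular: $Pb\in N_\phi$ \emph{means} $\phi(b^*Pb)=0$, which is what you are trying to prove, and the left-ideal property of $N_\phi$ only gives you $aP\in N_\phi$, not $Pb\in N_\phi$.

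The missing ingredient is a second use of the KMS condition, which is how the paper's cited lemmas (\cite[Lemma~10.3]{lr}, \cite[Lemma~2.2]{hlrs}) proceed: for $a$ analytic (e.g.\ a spanning element $s_vu_gs_w^*$), the KMS relation gives $\phi(aPb)=\phi\bigl(Pb\,\sigma_{i\beta}(a)\bigr)$, and now $P$ sits on the far left, so a single Cauchy--Schwarz yields
\[
\bigl|\phi\bigl(P\cdot b\,\sigma_{i\beta}(a)\bigr)\bigr|^2\;\leq\;\phi(PP^*)\,\phi\bigl((b\sigma_{i\beta}(a))^*(b\sigma_{i\beta}(a))\bigr)\;=\;\phi(P)\cdot(\cdots)\;=\;0,
\]
and density of the analytic spanning elements finishes the argument. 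Your closing remarks about the descended functional being a KMS state are fine, but the claim that ``the whole argument rests on the single computation $\phi(P)=0$ together with elementary Cauchy--Schwarz propagation'' is precisely where the proof breaks down: the KMS hypothesis is essential in the propagation step as well.
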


\begin{proof}
Suppose that $\phi$ is a KMS$_{\log |X|}$ state of $(\TT(G,X),\sigma)$. Then Proposition~\ref{KMS>beta} implies that $\phi(s_x s_x^*)=|X|^{-1}$, and hence
\[ \phi\Big(1- \sum_{x\in X} s_x s_x^*\Big) = 1-|X| |X|^{-1} = 0. \]
Now the argument in \cite[Lemma 10.3]{lr} implies that $\phi$ vanishes on the ideal $I$ generated by $1- \sum_{x\in X} s_x s_x^*$. (Or one could apply the more general result in \cite[Lemma~2.2]{hlrs} to the family $\FF=\{s_vu_gs_w^*\}$ of analytic elements.) Corollary \ref{CPquotient} says that $I$ is the kernel of the quotient map of $\TT(G,X)$ onto $\OO(G,X)$, and hence $\phi$ factors through this quotient map.
\end{proof}

To state our main results about states of $\OO(G,X)$, we need some information about the sets $F_g^k$ in Corollary~\ref{usualtr}.

\begin{prop}\label{Fng}
Suppose that $(G,X)$ is a self-similar action. For $g \in G \setminus \{e\}$ and $k\geq 0$, we consider again
\[
F_g^k = \{v \in X^k : g \cdot v = v \text{ and } g|_v=e \}.
\]
The sequence $\{|X|^{-k}|F_g^k|\}$ is increasing and converges with limit $c_g\in [0,1)$. 
\end{prop}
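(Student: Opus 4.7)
The plan is to derive both the monotonicity and the strict bound $c_g<1$ from direct counting arguments, with faithfulness of the $G$-action on $X^*$ providing the strict inequality. The only nontrivial ingredient is to promote a single word where $g$ acts nontrivially into a cylinder of words that stays in the complement of $F_g^k$ for all subsequent levels.

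First I would handle monotonicity. Given $v\in F_g^k$ and $x\in X$, the self-similarity relation together with Lemma~\ref{lem:props}(2) give
\[
g\cdot(vx)=(g\cdot v)(g|_v\cdot x)=v(e\cdot x)=vx, \qquad g|_{vx}=(g|_v)|_x=e|_x=e,
\]
so $vx\in F_g^{k+1}$. The map $(v,x)\mapsto vx$ from $F_g^k\times X$ to $F_g^{k+1}$ is manifestly injective, hence $|F_g^{k+1}|\geq|X|\cdot|F_g^k|$, i.e.\ $|X|^{-(k+1)}|F_g^{k+1}|\geq|X|^{-k}|F_g^k|$. Since $F_g^k\subseteq X^k$ the normalised sequence is bounded above by $1$, so it converges to some $c_g\in[0,1]$.

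For the strict inequality, I would invoke faithfulness. Because $g\neq e$ acts faithfully on $X^*$, there exists $u\in X^*$ with $g\cdot u\neq u$; set $m:=|u|$. For every $k\geq m$ and every $w\in X^{k-m}$, the self-similarity relation gives
\[
g\cdot(uw)=(g\cdot u)(g|_u\cdot w),
\]
whose first $m$ letters disagree with those of $uw$, so $uw\notin F_g^k$. The cylinder $uX^{k-m}$ therefore contributes $|X|^{k-m}$ words to $X^k\setminus F_g^k$, yielding
\[
|X|^{-k}|F_g^k|\leq 1-|X|^{-m}
\]
for all $k\geq m$, and hence $c_g\leq 1-|X|^{-m}<1$.

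The one place requiring any thought is the third step, and the key observation is simply that a disagreement of $g\cdot u$ with $u$ at a prefix of length $m$ survives every right-extension, so the single bad word guaranteed by faithfulness upgrades to a cylinder of positive measure at every subsequent level. Everything else is bookkeeping.
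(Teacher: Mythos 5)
Your proof is correct and follows essentially the same route as the paper: monotonicity by extending each $v\in F_g^k$ by one letter, and the strict bound $c_g<1$ by using faithfulness to produce a word $u$ with $g\cdot u\neq u$ whose right-extensions form a cylinder permanently excluded from the sets $F_g^k$. The only cosmetic difference is that you phrase the exclusion via disagreement of prefixes, while the paper simply notes that no word of the form $uw$ lies in any $F_g^l$; the counting is identical.
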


\begin{proof}
If $v \in F_g^k$ and $x \in X$, then 
\[
g \cdot (vx) = v(g|_v\cdot x) = vx\quad\text{ and}\quad g|_{vx} = (g|_v)|_x = e|_x = e,
\]
so $vx \in F_g^{k+1}$. Thus $|F_g^{k+1}|\geq |X|\,|F_g^k|$, and multiplying by $|X|^{-k-1}$  shows that $\{|X|^{-k}|F_g^k|\}$ is increasing.

Since the action of $G$ on $X^*$ is faithful, $g$ acts non-trivially on some $X^j$, say $g \cdot v \neq v$. Then $v$ is not in $F_g^j$, and no word of the form $vw$ is in any $F_g^l$. So for $k> j$,
\[
|F_g^k| \leq |X|^k - |X|^{k-j} = |X|^k(1-|X|^{-j}).
\]
Thus $|X|^{-k}|F_g^k|\leq 1-|X|^{-j}<1$ for $k>j$, and the sequence converges to a limit $c_g$ satisfying $c_g<1$.
\end{proof}

We can now state our main theorem about $\OO(G,X)$. Notice that part~\eqref{uniqueKMS} applies in particular when $(G,X)$ is contracting.

\begin{thm}\label{KMSatcritical}
Suppose that $(G,X)$ is a self-similar action.
\begin{enumerate}
\item \label{idinvtemp} Every KMS state of $(\OO(G,X),\sigma)$ has inverse temperature $\log|X|$.
\smallskip
\item\label{existcritKMS} Take $c_g$ as in Proposition~\ref{Fng}. Then there is a KMS$_{\log|X|}$ state on $\OO(G,X)$ such that
\begin{equation}\label{formcritstate}
\psi(s_vu_gs_w^*)=\begin{cases}
|X|^{-|v|}c_g&\text{if $v=w$}\\
0&\text{otherwise.}
\end{cases}
\end{equation}
\item\label{uniqueKMS} Suppose that for every $g\in G\setminus \{e\}$, the set $\{g|_v:v\in X^*\}$ is finite. Then the state in part \eqref{existcritKMS} is the only KMS state of $(\OO(G,X),\sigma)$.
\end{enumerate}
\end{thm}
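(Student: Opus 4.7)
The plan is to dispatch the three parts in order, using Proposition~\ref{KMS>beta}, the factorisation Proposition~\ref{factorthruCP}, and a weak-$*$ limiting argument for existence.

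For \eqref{idinvtemp}, any KMS$_\beta$ state $\phi$ of $\OO(G,X)$ lifts along the quotient $\TT(G,X)\to\OO(G,X)$ to a KMS$_\beta$ state of $\TT(G,X)$, so Proposition~\ref{KMS>beta}\eqref{ida} forces $\beta \geq \log|X|$; plugging the formula $\phi(s_xs_x^*) = e^{-\beta}$ from Proposition~\ref{KMS>beta}\eqref{idb} into the Cuntz relation $\sum_{x \in X} s_x s_x^* = 1$ of $\OO(G,X)$ gives $|X|e^{-\beta} = 1$, hence $\beta = \log|X|$.

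For \eqref{existcritKMS}, take $\beta_n \searrow \log|X|$ and the KMS$_{\beta_n}$ states $\psi_{\beta_n,\tau_e}$ of Corollary~\ref{usualtr}. By weak-$*$ compactness some subsequence converges to a state $\psi$, which is KMS$_{\log|X|}$ (the KMS relation \eqref{KMS_iff} on analytic spanning elements involves the scalar $e^{-\beta_n(|v|-|w|)}$, which converges) and factors through $\OO(G,X)$ by Proposition~\ref{factorthruCP}. Writing $r = |X|e^{-\beta}$ and $a_k = |X|^{-k}|F_g^k|$, Corollary~\ref{usualtr} becomes
\[
\psi_{\beta,\tau_e}(s_v u_g s_v^*) = e^{-\beta|v|}(1-r)\sum_{k=0}^\infty r^k a_k,
\]
and since $a_k \to c_g$ by Proposition~\ref{Fng}, the identity $(1-r)\sum_k r^k a_k - c_g = (1-r)\sum_k r^k(a_k - c_g)$ together with an $\epsilon$-split at a large $K$ shows the right-hand side tends to $|X|^{-|v|}c_g$, yielding \eqref{formcritstate}.

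The main obstacle is \eqref{uniqueKMS}. For two KMS$_{\log|X|}$ states $\phi,\phi'$, Proposition~\ref{KMS>beta}\eqref{idb} reduces the comparison to their values on $\{u_g\}$, so set $f(g) := \phi(u_g) - \phi'(u_g)$; note $f(e) = 0$. Both states factor through $\OO(G,X)$ by Proposition~\ref{factorthruCP}, so expanding $\phi(u_g) = \phi(u_g\sum_x s_x s_x^*)$ via relation \eqref{ssTC} of Proposition~\ref{Toeplitz} and applying Proposition~\ref{KMS>beta}\eqref{idb} termwise yields the recursion
\[
f(g) = |X|^{-1}\sum_{\{x\,:\,g\cdot x = x\}} f(g|_x),\qquad\text{and hence}\qquad f(g) = |X|^{-k}\sum_{\{v \in X^k\,:\,g\cdot v = v\}} f(g|_v)
\]
for every $k \geq 1$. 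Fix $g \neq e$ and let $H_g := \{g|_v : v \in X^*\}$, which is finite by hypothesis, closed under further restriction (Lemma~\ref{lem:props}), and contains $g$. Set $M := \max_{h \in H_g\setminus\{e\}}|f(h)|$. Applying the iteration to any $h \in H_g\setminus\{e\}$ and discarding the terms with $g|_v = e$ (using $f(e)=0$ together with $H_h \subseteq H_g$) gives $|f(h)| \leq |X|^{-k}|G_h^k|\,M$. Faithfulness of the action supplies for each such $h$ a word $v_h \in X^{j_h}$ with $h \cdot v_h \neq v_h$; no extension of $v_h$ is fixed by $h$, so $|X|^{-k}|G_h^k| \leq 1 - |X|^{-j_h}$ for $k \geq j_h$. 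Finiteness of $H_g$ then produces $\rho := \max_{h \in H_g\setminus\{e\}}(1 - |X|^{-j_h}) < 1$, and for $k$ larger than every $j_h$ we get $|f(h)| \leq \rho M$ uniformly in $h$, whence $M \leq \rho M$ and $M = 0$. In particular $f(g) = 0$, so $\phi = \phi'$. The crucial role of the finite-state hypothesis is precisely in upgrading the pointwise bounds $|X|^{-k}|G_h^k| < 1$ to the uniform contraction $\rho < 1$ over $H_g \setminus \{e\}$.
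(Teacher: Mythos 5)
Your proposal is correct. Parts \eqref{idinvtemp} and \eqref{existcritKMS} follow the paper essentially verbatim: the same Cuntz-relation computation forces $\beta=\log|X|$, and the same weak-$*$ limit of the states $\psi_{\beta_n,\tau_e}$ combined with the Abel-type summation argument (which is exactly the paper's Lemma~\ref{lem-cvgence}) yields \eqref{formcritstate}. For part \eqref{uniqueKMS}, however, you take a genuinely different route. The paper fixes one KMS state $\phi$ and directly proves $\phi(u_g)=c_g$: it chooses a single $j$ witnessing non-triviality of every non-identity restriction of $g$, proves by induction that $|G_g^{nj}\setminus F_g^{nj}|\le(|X|^j-1)^n$, and then splits the sum \eqref{form_ug} into a part bounded by $(1-|X|^{-j})^n\to 0$ and the part $|X|^{-nj}|F_g^{nj}|\to c_g$. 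You instead compare two KMS states via $f(g)=\phi(u_g)-\phi'(u_g)$, derive the averaging recursion $f(g)=|X|^{-k}\sum_{\{v:\,g\cdot v=v\}}f(g|_v)$, and run a maximum-principle contraction over the finite restriction-closed set $H_g\setminus\{e\}$, using faithfulness to get a uniform factor $\rho<1$; this gives $f\equiv 0$ and then uniqueness follows by citing the existence from part \eqref{existcritKMS}. Both arguments hinge on the same two ingredients (finiteness of the restriction set and faithfulness of the action), but yours avoids the counting induction and the re-derivation of $c_g$, at the price of needing part \eqref{existcritKMS} to identify the common value, whereas the paper's version is self-contained in that it recomputes $\phi(u_g)=c_g$ for an arbitrary KMS state without invoking existence. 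Your argument is sound as written; the only cosmetic point is that your bound $|f(h)|\le|X|^{-k}|G_h^k|\,M$ is really a bound by $|X|^{-k}|G_h^k\setminus F_h^k|\,M$ after discarding the $f(e)=0$ terms, but since you then dominate by $|X|^{-k}|G_h^k|\le 1-|X|^{-j_h}$ this makes no difference.
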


\begin{proof}[Proof of Theorem~\ref{KMSatcritical}\,\eqref{idinvtemp}]
Suppose that $\phi$ is a KMS state of $(\OO(G,X),\sigma)$ with inverse temperature $\beta$. Then the Cuntz relation $\sum_{x\in X} s_x s_x^*=1$ and the KMS condition give
\begin{align*}
1=\phi(1)&=\phi \Big( \sum_{x\in X} s_x s_x^*\Big) =\sum_{x\in X} \phi(s_x s_x^*)=\sum_{x\in X} \phi(s_x^* \sigma_{i\beta}(s_x)) \\
&=\sum_{x\in X} e^{-\beta}\phi(s_x^* s_x) =\sum_{x\in X} e^{-\beta} = \d e^{-\beta},
\end{align*}
and hence $\beta = \log |X|$.
\end{proof}

We will prove existence of the KMS$_{\log |X|}$ state $\psi$ by taking a limit of KMS$_\beta$ states as $\beta\to \beta_c=\log|X|$. To evaluate the limit, we need the following analytic lemma.

\begin{lemma}\label{lem-cvgence}
Suppose that $\{c_k\}$ is an increasing sequence of real numbers with $c_k\to c$. Then 
\[
\sum_{k=0}^\infty (1-r)c_kr^k\to c\quad\text{as $r\to 1-$.}
\]
\end{lemma}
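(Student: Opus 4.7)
The key idea is to exploit the identity $(1-r)\sum_{k=0}^\infty r^k = 1$ for $r \in (0,1)$, which lets us rewrite the error as
\[
c - \sum_{k=0}^\infty (1-r)c_k r^k = \sum_{k=0}^\infty (1-r)(c-c_k)r^k,
\]
where every term $c - c_k$ is nonnegative because $\{c_k\}$ is increasing with limit $c$. So the problem reduces to showing that this tail-type expression vanishes as $r \to 1-$.

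My plan is the standard Abelian-theorem splitting trick. Given $\varepsilon > 0$, use the convergence $c_k \to c$ to pick $N$ with $0 \leq c - c_k < \varepsilon$ for all $k \geq N$. Then split the series into $\sum_{k=0}^{N-1}$ and $\sum_{k=N}^\infty$. The first piece is a fixed finite sum of terms each carrying a factor $(1-r)$, so it tends to $0$ as $r \to 1-$. The second piece is bounded by
\[
\varepsilon \sum_{k=N}^\infty (1-r)r^k = \varepsilon\, r^N \leq \varepsilon.
\]
Hence $\limsup_{r \to 1-} \bigl|c - \sum_{k=0}^\infty(1-r)c_k r^k\bigr| \leq \varepsilon$, and since $\varepsilon$ was arbitrary the limit is $c$.

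There is no real obstacle here; the only mild point to be careful about is ensuring the manipulations are justified for $r$ close to $1$, which they are because $\{c_k\}$ is bounded (it is increasing and convergent), so the series $\sum (1-r)c_k r^k$ converges absolutely for $r \in (0,1)$ and the rearrangement $c - \sum (1-r)c_k r^k = \sum (1-r)(c-c_k)r^k$ is legitimate. Everything else is routine.
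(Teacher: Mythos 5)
Your proof is correct and follows essentially the same route as the paper: rewrite the error as $\sum_k (1-r)(c-c_k)r^k$ using $(1-r)\sum_k r^k=1$, split at an index beyond which $c-c_k<\varepsilon$, and let the finite head vanish with the factor $(1-r)$ while the tail is bounded by $\varepsilon$. Nothing to add.
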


\begin{proof}
Fix $\epsilon>0$, and choose $K$ such that $k\geq K\Longrightarrow 0\leq c-c_k<\frac{\epsilon}{2}$. Choose $\delta>0$ such that 
\[
0<1-r<\delta\Longrightarrow \sum_{k=0}^K (1-r)c_kr^k<\textstyle{\frac{\epsilon}{2}}.
\]
Then for $r$ satisfying $0<1-r<\delta$, we have $\sum_{k=0}^\infty(1-r)r^k=1$, so
\begin{align*}
\Big|c-\sum_{k=0}^\infty (1-r)c_kr^k\Big|&=\Big|\sum_{k=0}^\infty (1-r)(c-c_k)r^k\Big|\\
&\leq \sum_{k=0}^K(1-r)(c-c_k)r^k
+(1-r)(c-c_K)\Big(\sum_{k=K+1}^\infty r^k\Big)\\
&=\sum_{k=0}^K(1-r)(c-c_k)r^k+(1-r)(c-c_K)r^K(1-r)^{-1},
\end{align*}
which is less than $\epsilon$ by choice of $K$ and $\delta$ (and because $r^K<1$).
\end{proof}

\begin{proof}[Proof of Theorem~\ref{KMSatcritical}\,\eqref{existcritKMS}]
We choose a decreasing sequence $\{\beta_n\}$ such that $\beta_n\to \log|X|$, and consider the KMS$_{\beta_n}$ states $\psi_{\beta_n}:=\psi_{\beta_n,\tau_e}$ of Corollary~\ref{usualtr}. By weak* compactness of the state space, we can by passing to a subsequence assume that $\{\psi_{\beta_n}\}$ converges weak* to a state $\psi$. Proposition~5.3.23 of \cite{bra-rob} implies that $\psi$ is a KMS$_{\log |X|}$ state. (Or we could wait till we have the formula \eqref{formcritstate}, and apply Proposition~\ref{KMS>beta}.)

We now compute the limit of $\psi_{\beta_n}(s_vu_gs_w^*)$. We know from \eqref{repn_state} that $\psi_{\beta_n}(s_vu_gs_w^*)=0$ unless $v=w$, and satisfies
\begin{align*} \label{averagedstate}
\psi_{\beta_n}(s_v u_g s_v^*)&=e^{-\beta_n|v|}(1-|X|e^{-\beta_n}) \sum_{k=0}^\infty  e^{-\beta_n k} |F^k_g|\\
&=e^{-\beta_n|v|}\Big(\sum_{k=0}^\infty (1-|X|e^{-\beta_n}) |X|^{-k}\,|F^k_g|(|X| e^{-\beta_n})^k\Big).
\end{align*}
Now we are in the situation of Lemma~\ref{lem-cvgence} with $r=|X|e^{-\beta_n}$ and $c_k=|X|^{-k}\,|F^k_g|\to c_g$. Since $r_n:=|X|e^{-\beta_n}$ converges to $1$ from below as $n\to \infty$, Lemma~\ref{lem-cvgence} implies that 
\[
\sum_{k=0}^\infty (1-r_n) c_kr_n^k\to c_g\quad\text{as $n\to\infty$.}
\]
Thus
\[
\psi(s_v u_g s_v^*)=\lim_{n\to \infty}\psi_{\beta_n}(s_v u_g s_v^*)=\lim_{n\to\infty} e^{-\beta_n|v|}\Big(\sum_{k=0}^\infty (1-r_n) c_kr_n^k\Big)=|X|^{-|v|}c_g,
\]
as required.
\end{proof}

\begin{proof}[Proof of Theorem~\ref{KMSatcritical}\,\eqref{uniqueKMS}]
Suppose that $\phi$ is a KMS state on $\OO(G,X)$. We need to show that $\phi$ is the state $\psi$ in \eqref{existcritKMS}. Part~\eqref{idinvtemp} implies that $\phi$ has inverse temperature $\log|X|$. Now Proposition~\ref{KMS>beta} implies that  it suffices for us to prove that $\phi(u_g)=\psi(u_g)$ whenever $g\not= e$.

Suppose that $g\in G\setminus\{e\}$. Since $\{g|_v:v\in X^*\}$ is finite and the action of $G$ on $X^*$ is faithful, there exists $j$ such that for each $v \in X^*$ with $g|_v\not= e$, there exists $u\in X^j$ with $g|_v \cdot u \neq u$. We will show that
\begin{equation}\label{tozero}
|X|^{-nj}\,\big|G^{nj}_g \setminus F_g^{nj}\big|=|X|^{-nj}\,\big|\{w \in X^{nj}: g \cdot w = w\} \setminus F_g^{nj}\big|\to 0\quad\text{ as $n\to \infty$,}
\end{equation}
and use this to show that $\phi(u_g)=c_g=\psi(u_g)$.

We prove by induction that
\begin{align}
\label{ind_E}
\big|G^{nj}_g \setminus F_g^{nj}\big| \leq (|X|^j -1)^n
\end{align}
for all $n \geq 1$. Our choice of $j$ ensures that, for every $v\in G^j_g$, the set $\{w \in X^j :g|_v \cdot w =w\}$ is not all of $X^j$; thus we have \eqref{ind_E} for $n=1$. Assume that \eqref{ind_E} holds for $n$. Then
\[
\big|G^{(n+1)j}_g \setminus F_g^{(n+1)j}\big| = \big|\{vv' :v \in X^{nj}, v' \in X^{j}, g \cdot vv' = vv'\}\setminus F_g^{(n+1)j}\big|, 
\]
and we have 
\[
vv'\in G^{(n+1)j}_g \setminus F_g^{(n+1)j}\Longrightarrow
v\in G^{nj}_g \setminus F_g^{nj}\text{ and }g|_v\cdot v'=v'.
\]
On the other hand, for each $v\in G^{nj}_g \setminus F_g^{nj}$, we have $g|_v\not=e$, and thus there exists $v'\in X^j$ such that $g|_v\cdot v'\not= v'$. Thus for each $v\in G^{nj}_g \setminus F_g^{nj}$,
\[
\big|\big\{v':vv'\in G^{(n+1)j}_g \setminus F_g^{(n+1)j}\big\}\big|\leq |X|^j-1,
\]
and the inductive hypothesis gives
\[
\big|G^{(n+1)j}_g \setminus F_g^{(n+1)j}\big|\leq \big|G^{nj}_g \setminus F_g^{nj}\big|\big(\,|X|^j-1\big) \leq \big(|X|^j-1\big)^{n+1}.
\]
Thus \eqref{ind_E} holds for all $n \geq 1$. Now we have
\[
0\leq|X|^{-nj}\,\big|G^{nj}_g \setminus F_g^{nj}\big| \leq |X|^{-nj}(|X|^j -1)^n = \Big(1-\frac{1}{|X|^j}\Big)^n \to 0\quad\text{ as $n\to \infty$,}
\]
which gives \eqref{tozero}.

To complete the proof we show that $\phi(u_g)=c_g$. For every $n \in \N$, we use the Cuntz relation $1=\sum_{w \in X^{nj}} s_w s_w^*$  and Proposition~\ref{KMS>beta} to compute
\begin{align}
\notag
\phi(u_g) &= \phi\Big( u_g \sum_{w \in X^{nj}} s_{w} s_w^* \Big) \\
\notag
&= \sum_{w \in X^{nj}} \phi(s_{g \cdot w} u_{g|_w} s_w^*) \\
\notag
&= \sum_{\{w \in X^{nj} \,:\,g \cdot w = w\}} |X|^{-nj} \phi(u_{g|_w}) \\
\notag
& = \sum_{w \in G^{nj}_g \setminus F_g^{nj}} |X|^{-nj} \phi(u_{g|_w}) +  \sum_{w \in F_g^{nj}} |X|^{-nj} \phi(u_{e}) \\
\label{form_ug}
& = \sum_{w \in G^{nj}_g \setminus F_g^{nj}} |X|^{-nj} \phi(u_{g|_w}) +  |X|^{-nj} |F_g^{nj}|.
\end{align}
Let $\varepsilon > 0$. By Proposition~\ref{Fng}, there exists $N \in \N$ such that 
\[
n \geq N\Longrightarrow \big|c_g - |X|^{-nj} |F_g^{nj}|\,\big|< \varepsilon/2\quad \text{and}\quad \Big(1-\frac{1}{|X|^j}\Big)^n < \varepsilon/2.
\]
Then for $n \geq N$, \eqref{form_ug} gives
\begin{align*}
| \phi(u_g) - c_g| &< \sum_{w \in G^{nj}_g \setminus F_g^{nj}} |X|^{-nj} |\phi(u_{g|_w})| + \varepsilon/2 \\
& \leq \big|G^{nj}_g \setminus F_g^{nj}\big| \, |X|^{-nj} + \varepsilon/2 \\
& \leq \Big(1-\frac{1}{|X|^j}\Big)^n + \varepsilon/2 < \varepsilon,
\end{align*}
which implies that $\phi(u_g)=c_g$.
\end{proof}

Somewhat surprisingly, our construction of KMS states at the critical inverse temperature gives a third trace on $C^*(G)$.

\begin{cor}\label{newtr}
Suppose that $(G,X)$ is a self-similar action, and take $\{c_g\}$ as in Proposition~\ref{Fng}. Then there is a trace $\tau$ on $C^*(G)$ such that $\tau(\delta_g)=c_g$ for $g\not= e$.
\end{cor}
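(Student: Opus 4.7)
The plan is to extract the desired trace directly from the critical KMS state $\psi$ on $\OO(G,X)$ constructed in Theorem~\ref{KMSatcritical}\,\eqref{existcritKMS}, by composing with the quotient map $q:\TT(G,X)\to \OO(G,X)$ and then with the canonical inclusion $i_{C^*(G)}:C^*(G)\to \TT(G,X)$. The tracial property will then fall out of Proposition~\ref{KMS>beta}.

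First, I would set $\tilde\psi:=\psi\circ q$, which is a KMS$_{\log|X|}$ state on $(\TT(G,X),\sigma)$ (pulling a KMS state back through an equivariant surjection again gives a KMS state). Define
\[
\tau:=\tilde\psi\circ i_{C^*(G)}\colon C^*(G)\to\C.
\]
This is a state because $i_{C^*(G)}$ is a unital $*$-homomorphism. To see that it is tracial, I would invoke relation \eqref{phi_trace} in Proposition~\ref{KMS>beta}\,\eqref{idb}, which gives $\tilde\psi(u_gu_h)=\tilde\psi(u_hu_g)$ for all $g,h\in G$, i.e.\ $\tau(\delta_g\delta_h)=\tau(\delta_h\delta_g)$. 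Since the linear span of $\{\delta_g:g\in G\}$ is dense in $C^*(G)$ and $\tau$ is continuous, this extends to $\tau(ab)=\tau(ba)$ for all $a,b\in C^*(G)$.

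Finally, to identify $\tau(\delta_g)$ for $g\neq e$, I would evaluate $\tau(\delta_g)=\psi(u_g)$ using the formula \eqref{formcritstate} with $v=w=\varnothing$, noting that $s_\varnothing=1$ and $|\varnothing|=0$, which yields
\[
\tau(\delta_g)=\psi(u_g)=|X|^{-0}c_g=c_g.
\]
(For $g=e$, one similarly recovers $\tau(\delta_e)=1$, consistent with $c_e=1$ since $F_e^k=X^k$.)

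There is no serious obstacle here: once the critical state $\psi$ is in hand, the trace is produced by what is essentially a bookkeeping argument. The only point worth verifying carefully is that the abstract tracial identity on the group generators, guaranteed by Proposition~\ref{KMS>beta}\,\eqref{idb}, passes to the full group $C^*$-algebra, which is routine by continuity and linearity.
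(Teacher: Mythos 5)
Your proposal is correct and follows essentially the same route as the paper: the paper's proof is the one-line observation that $\phi\circ\pi_u$ is a trace for any KMS state $\phi$ by Proposition~\ref{KMS>beta}\,\eqref{idb}, applied to the critical state of Theorem~\ref{KMSatcritical}\,\eqref{existcritKMS}, whose formula \eqref{formcritstate} gives $\tau(\delta_g)=c_g$. Your extra care about pulling back through the quotient map and extending the tracial identity by linearity and continuity is exactly the routine bookkeeping the paper leaves implicit.
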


\begin{proof}
Proposition~\ref{KMS>beta}\,\eqref{idb} implies that $\phi\circ \pi_u$ is a trace on $C^*(G)$ for every KMS state $\phi$ of $\TT(G,X)$ or $\OO(G,X)$, and taking $\phi$ to be the KMS$_{\log|X|}$ state of $\OO(G,X)$ in Theorem~\ref{KMSatcritical}\,\eqref{existcritKMS} gives the required trace $\tau:=\phi\circ\pi_u$.
\end{proof}

It will follow from Propositions~\ref{basilicatr} and~\ref{Grigonnuc} below that, for the self-similar actions of the basilica and Grigorchuk groups, the trace of Corollary~\ref{newtr} is distinct from the traces $\tau_e$ and $\tau_1$ considered in \S\ref{parameterise}. 

\begin{remark}
In \cite[\S3.4]{Pla}, Planchat constructs a trace $\Tr$ on a quotient $C^*_{\rho}(G)$ of $C^*(G)$, and the value $\Tr(\rho_g)$ at a unitary generator is (in our notation) the limit $\lim_{k\to\infty}|X|^{-k}|G^k_g|$ of the decreasing sequence $\{|X|^{-k}|G^k_g|\}$. When $(G,X)$ has the finite-state property of Theorem~\ref{KMSatcritical}\,\eqref{uniqueKMS}, the calculation \eqref{tozero} implies that $|X|^{-k}|G^k_g|\to c_g$ also, and hence our trace coincides with the lift of Planchat's trace to $C^*(G)$. For the groups generated by automata studied in \cite{Pla}, the pair $(G,X)$ always has this finite-state property. (To see this, note that $G$ is generated by a finite set $S$ which is closed under restriction. This generating family induces a length function $l$ on $G$, and then the properties of restriction imply that $l(g|_v)\leq l(g)$ for all $g\in G$ and $v\in X^*$. Since there are finitely many words of a fixed length, it follows that each $\{g|_v:v\in X^*\}$ is finite.) 

Our calculations in the next section suggest that it may be easier to compute the values of this trace using the formula $c_g=\lim_{k\to \infty}|X|^{-k}|F^k_g|$.
\end{remark}

We finish by showing that for a contracting self-similar action, the values of $c_g$ on the nucleus determine the function $c$, and hence the KMS state at critical inverse temperature. For convenience, we define $c_e:=1$.

\begin{cor}\label{nucenough}
Suppose that $(G,X)$ is a contracting self-similar action with nucleus $\NN$. For $g\in G$, choose $k\in \N$ such that $g|_w\in \NN$ for every $w\in X^k$. Then
\[
c_g=\sum_{\{w\in X^k\,:\,g\cdot w=w\}}|X|^{-k}c_{g|_w}.
\]
\end{cor}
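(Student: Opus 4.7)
The plan is to exploit the unique KMS state $\psi$ produced in Theorem~\ref{KMSatcritical}\,\eqref{existcritKMS}, and simply expand $\psi(u_g)$ using the Cuntz relation at level $k$. Recall that on $\OO(G,X)$ we have $1=\sum_{w\in X^k}s_ws_w^*$, and the relation $u_gs_w=s_{g\cdot w}u_{g|_w}$ (the iterated form of Proposition~\ref{Toeplitz}\,\eqref{ssTC}, as recorded in the paragraph following Corollary~\ref{spanning}).

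First I would observe that the formula $\psi(s_vu_hs_v^*)=|X|^{-|v|}c_h$ from Theorem~\ref{KMSatcritical}\,\eqref{existcritKMS} remains valid when $h=e$ under the convention $c_e:=1$: indeed, the KMS$_{\log|X|}$ condition gives
\[
\psi(s_vs_v^*)=e^{-\log|X|\cdot|v|}\psi(s_v^*s_v)=|X|^{-|v|}\psi(1)=|X|^{-|v|}.
\]
With this in hand, the main computation is
\[
c_g=\psi(u_g)=\sum_{w\in X^k}\psi(u_gs_ws_w^*)=\sum_{w\in X^k}\psi(s_{g\cdot w}u_{g|_w}s_w^*).
\]
By the formula for $\psi$, the summand $\psi(s_{g\cdot w}u_{g|_w}s_w^*)$ vanishes unless $g\cdot w=w$, in which case it equals $|X|^{-k}c_{g|_w}$ (this uses the extended convention since $g|_w$ may equal $e$). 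Summing gives exactly the claimed identity.

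There is no serious obstacle here; the contracting hypothesis only serves to guarantee the existence of a $k$ with $g|_w\in\NN$ for all $w\in X^k$, which ensures that the recursion expresses $c_g$ in terms of values $c_{g|_w}$ already known on the nucleus. The role of the corollary is therefore less a piece of genuine analysis than a bookkeeping tool: combined with the values of $c_h$ on $\NN$, it determines $c$ on all of $G$ and thus pins down the unique KMS state at critical inverse temperature.
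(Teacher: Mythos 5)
Your proof is correct and is essentially the paper's own argument: the paper also takes the KMS$_{\log|X|}$ state $\phi$ of $(\OO(G,X),\sigma)$ with $\phi(u_g)=c_g$ and reads the identity off the first three lines of the computation \eqref{form_ug}, i.e.\ the Cuntz relation $1=\sum_{w\in X^k}s_ws_w^*$ together with $u_gs_w=s_{g\cdot w}u_{g|_w}$ and the formula \eqref{formcritstate}. Your explicit check that the convention $c_e=1$ is consistent with $\psi(s_vs_v^*)=|X|^{-|v|}$ is a small point the paper leaves implicit, but it is not a different route.
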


\begin{proof}
We let $\phi$ be the unique KMS$_{\log|X|}$ state of $(\OO(G,X),\sigma)$, so that in particular $\phi(u_g)=c_g$ for all $g$ (see Theorem~\ref{KMSatcritical}\,\eqref{existcritKMS}). Now the result follows from the calculation in the first three lines of \eqref{form_ug}.
\end{proof}

\section{Examples}

\subsection{Dilation matrices}\label{checklrr}
Suppose that $A\in M_d(\Z)$ has $|\det A|>1$, and consider the associated self-similar action $(\Z^d,\Sigma)$ of \S\ref{dilation_example}. We first check that the states constructed in Theorem~\ref{repn_hilbert} are the same as the ones in \cite[Proposition~6.1]{lrr}.
 
The Fourier  transform gives an isomorphism of $C^*(G)=C^*(\Z^d)$ onto $C(\T^d)$; we choose the one which takes $\delta_n$ to the function $z\mapsto z^n$. Traces on $C^*(\Z^d)$ are given by probability measures on $\T^d$; given such a measure $\mu$, we consider the trace $\tau_\mu$ such that $\tau_\mu(\delta_n)=\int_{\T^d}z^n\,d\mu(z)$. We want to compute the values of the state $\psi_{\beta,\tau_\mu}$ of Theorem~\ref{repn_hilbert} on an element $s_wu_ns_w^*$ (it vanishes on the other spanning elements). For $j\geq 0$ and $u\in \Sigma^j$, we have
\[
n\cdot u=u\Longleftrightarrow n+b_j(u)=b_j(u)\Longleftrightarrow n\in B^j\Z^d,
\]
so $\{u\in \Sigma^j:n\cdot u=u\}$ is either $\Sigma^j$ (when $n\in B^j\Z^d$) or empty. If $n\cdot u=u$, then $n|_u=B^{-j}n$ by \eqref{idrest}, so the right-hand side of \eqref{repn_state} is
\begin{equation}\label{RHS5*}
(1-|\det A|e^{-\beta}) {\sum_{\{j\geq 0  \mid n\in B^j\Z^d\}}} |\det A|^je^{-\beta(|w|+j)}\tau_\mu(\delta_{B^{-j}n}).
\end{equation}
Thus  we have $n\in B^j\Z^d\Longleftrightarrow B^{|w|}n\in B^{|w|+j}\Z^d$, and writing $j'=|w|+j$ in \eqref{RHS5*} gives
\begin{equation}\label{eqfromlrr}
\psi_{\beta,\tau_\mu}(s_wu_ns_w^*)=
(1-|\det A|e^{-\beta}) {\sum_{\{j'\geq |w| \mid B^{|w|}n\in B^{j'}\Z^d\}}} |det A|^{j'-|w|}e^{-\beta j'} \int_{\T^d}z^{B^{(|w|-j')}n}\,d\mu(z).
\end{equation}

The isomorphism $\theta:\TT(\Z^d,\Sigma)\to \TT(M_L)$ of Proposition~\ref{isoToeplitz} carries an element $s_wu_ns_w^*$ into $u_{b(w)+B^{|w|}n}v^{|w|}v^{*|w|}u_{b(w)}^*$, and  we can check that the right-hand side of \eqref{eqfromlrr} is the same as the value of the state $\psi_{\beta,\mu}$ of \cite[Proposition~6.1]{lrr} on the spanning element $u_{b(w)+B^{|w|}n}v^{|w|}v^{*|w|}u_{b(w)}^*$.

\begin{prop}[{\cite[Theorem~5.3]{lrr}}]\label{fromlrr}
Suppose that $A\in M_d(\Z)$ has $N:=|\det A|\not=0$. Then there is a KMS$_{\log N}$ state $\phi$ of $(\OO(M_L),\sigma)=C^*(u,v)$ such that 
\begin{equation}\label{KMSlogNdilation}
\phi(u_mv^kv^{*l}u_n^*)=
\begin{cases}0&\text{unless $k=l$ and $m=n$}\\
N^{-k}&\text{if $k=l$ and $m=n$.}
\end{cases}
\end{equation} 
If $A$ is a dilation matrix, then this is the only KMS state of $(\OO(M_L),\sigma)$.
\end{prop}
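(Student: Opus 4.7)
The plan is to transport the problem via the isomorphism $\bar\theta:\OO(\Z^d,\Sigma)\to \OO(M_L)$ from the corollary of Proposition~\ref{isoToeplitz}, which intertwines the gauge actions, and then apply the machinery of \S\ref{KMS_CP_alg} to the self-similar action $(\Z^d,\Sigma)$ of \S\ref{dilation_example}. Pushing forward a KMS state by $\bar\theta$ gives a KMS state at the same inverse temperature, so it suffices to identify the KMS states on $(\OO(\Z^d,\Sigma),\sigma)$ and read off the corresponding states on $\OO(M_L)$.

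For existence, I would first compute the constants $c_n$ of Proposition~\ref{Fng} for $(\Z^d,\Sigma)$. Iterating \eqref{idrest}, one has $n\cdot v=v$ iff $n\in B^{|v|}\Z^d$, and in that case $n|_v=B^{-|v|}n$. Hence $F_n^k=\Sigma^k$ when $n=0$, while $F_n^k=\varnothing$ for $n\neq 0$ and every $k$, giving $c_0=1$ and $c_n=0$ for $n\neq 0$. Theorem~\ref{KMSatcritical}\,\eqref{existcritKMS} then produces a KMS$_{\log N}$ state $\psi$ on $\OO(\Z^d,\Sigma)$, and we set $\phi:=\psi\circ\bar\theta^{-1}$. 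To verify the formula \eqref{KMSlogNdilation}, rather than inverting $\bar\theta$ on each spanning monomial (a minor nuisance because $b:\Sigma^l\to \Z^d$ is not surjective), I would work directly in $\OO(M_L)$: $\sigma$-invariance of KMS states kills the value when $k\neq l$, and for $k=l$ the KMS condition combined with the iterated form of (E2), namely $v^{*k}u_pv^k=u_{B^{-k}p}$ when $p\in B^k\Z^d$ and $0$ otherwise, gives
\[
\phi(u_mv^kv^{*k}u_n^*)=N^{-k}\phi(u_{B^{-k}(m-n)})\quad\text{if }m-n\in B^k\Z^d,
\]
and $0$ otherwise. Since $\phi(u_p)=\psi(u_p)=c_p=\delta_{p,0}$, this collapses to $N^{-k}$ when $m=n$ and to $0$ otherwise.

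For uniqueness when $A$ is a dilation matrix, Proposition~\ref{prop:dilation} tells us that $(\Z^d,\Sigma)$ is contracting with finite nucleus $\NN$. This implies the finite-state hypothesis of Theorem~\ref{KMSatcritical}\,\eqref{uniqueKMS}: for each $n\neq 0$ there is $k$ with $n|_v\in\NN$ whenever $|v|\geq k$, and only finitely many $v$ satisfy $|v|<k$, so $\{n|_v:v\in\Sigma^*\}$ is finite. Combining parts~\eqref{idinvtemp} and \eqref{uniqueKMS} of Theorem~\ref{KMSatcritical} then yields uniqueness on $\OO(\Z^d,\Sigma)$, and hence on $\OO(M_L)$ via $\bar\theta$. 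The only real obstacle is the $c_n$ computation; everything else is either a direct invocation of the general framework or routine KMS manipulation in the $(u,v)$ presentation.
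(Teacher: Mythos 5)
Your proposal is correct and follows essentially the same route as the paper: compute $F_n^k=\varnothing$ for $n\neq 0$ from \eqref{idrest}, apply Theorem~\ref{KMSatcritical} to $(\Z^d,\Sigma)$, transport via $\bar\theta$, and invoke Proposition~\ref{prop:dilation} (contracting, hence finite-state) for uniqueness. The only difference is cosmetic: you verify \eqref{KMSlogNdilation} by working directly in the $(u,v)$ presentation with the KMS condition and the iterated relation (E2), whereas the paper reads the formula off from \eqref{formcritstate2} applied to the elements $\theta(s_ws_w^*)$; both verifications are routine and valid.
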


\begin{proof}
To apply Theorem~\ref{KMSatcritical}\,\eqref{existcritKMS} to the associated self-similar group $(\Z^d,\Sigma)$, we need to compute the numbers $|F^j_n|$. For $u\in \Sigma^j$, we have $n\cdot u=u\Longleftrightarrow n\in B^j\Z^d$, and then $n|_u=B^{-j}n$, so $n|_u=0\Longleftrightarrow n=0$. Thus $F^j_n=\varnothing$ for all $n\not=0$, and the state $\psi$ of Theorem~\ref{KMSatcritical}\,\eqref{existcritKMS} satisfies
\begin{equation}\label{formcritstate2}
\psi(s_vu_ns_w^*)=\begin{cases}0&\text{unless $v=w$ and $n=0$}\\
|\Sigma|^{-|w|}=N^{-|w|}&\text{if $v=w$ and $n=0$.}
\end{cases}
\end{equation}
We take $\phi:=\psi\circ\theta^{-1}$. Then the elements of the form $\theta(s_ws_w^*)$ are the $u_mv^kv^{*l}u_n^*$ with $k=l=|w|$ and $m=n=b(w)$, and \eqref{formcritstate2} reduces to the formula \eqref{KMSlogNdilation} for $\phi$.

Now suppose that $A$ is a dilation matrix. Then Proposition~\ref{prop:dilation} implies that $(\Z^d,\Sigma)$ is a contracting self-similar action, and the uniqueness follows from Theorem~\ref{KMSatcritical}\,\eqref{uniqueKMS}.
\end{proof}

\subsection{Computing using the Moore diagram}\label{sec:Mooreuseful}

To calculate values of the KMS states explicitly, we need to compute the sizes of the sets $F_g^k$ and $G_g^k$ defined in \eqref{defFgj} and \eqref{defGgj}. We begin with $G_g^k$.

For each $v\in G_g^k$ we get the following path $\mu_v$ in the Moore diagram:
\begin{center}
\begin{tikzpicture}
\node[vertex] (vertexe) at (-5.7,0)   {$\mu_v:=$};
\node[vertex] (vertexe) at (-5,0)   {$g$};
\node[vertex] (vertexa) at (-2.5,0)   {$g|_{v_1}$}	
	edge [<-,>=latex,out=180,in=0,thick] node[auto,swap,pos=0.5]{$\scriptstyle(v_1,v_1)$} (vertexe);
\node[vertex] (vertexb) at (0,0)  {$g|_{v_1v_2}$}
	edge [<-,>=latex,out=180,in=0,thick] node[auto,swap,pos=0.5]{$\scriptstyle(v_2,v_2)$}(vertexa);
\node[vertex] (vertexc) at (2.5,0)   {$\cdots$}
	edge [<-,>=latex,out=180,in=0,thick] node[auto,swap,pos=0.5]{$\scriptstyle(v_3,v_3)$} (vertexb);
\node[vertex] (vertexd) at (5,0)  {$g|_{v}$}
	edge [<-,>=latex,out=180,in=0,thick] node[auto,swap,pos=0.5]{$\scriptstyle(v_k,v_k)$}(vertexc);
\end{tikzpicture}
\end{center}
Notice that all the labels have the form $(x,x)$. Every path with labels $(x,x)$ arises this way: given
\begin{center}
\begin{tikzpicture}
\node[vertex] (vertexe) at (-5.6,0)   {$\mu:=$};
\node[vertex] (vertexe) at (-5,0)   {$g$};
\node[vertex] (vertexa) at (-2.5,0)   {$h_1$}	
	edge [<-,>=latex,out=180,in=0,thick] node[auto,swap,pos=0.5]{$\scriptstyle(x_1,x_1)$} (vertexe);
\node[vertex] (vertexb) at (0,0)  {$h_2$}
	edge [<-,>=latex,out=180,in=0,thick] node[auto,swap,pos=0.5]{$\scriptstyle(x_2,x_2)$}(vertexa);
\node[vertex] (vertexc) at (2.5,0)   {$\cdots$}
	edge [<-,>=latex,out=180,in=0,thick] node[auto,swap,pos=0.5]{$\scriptstyle(x_3,x_3)$} (vertexb);
\node[vertex] (vertexd) at (5,0)  {$h_k,$}
	edge [<-,>=latex,out=180,in=0,thick] node[auto,swap,pos=0.5]{$\scriptstyle(x_k,x_k)$}(vertexc);
\end{tikzpicture}
\end{center}
we have $h_i=(\cdots((g|_{x_1})|_{x_2})\cdots)|_{x_{i}}=g|_{x_1\cdots x_{i}}$, and $v=x_1x_2\cdots x_k$ belongs to $G_g^k$ with $\mu_v=\mu$. We call paths $\mu$ of this form \emph{stationary}, because they give elements $v$ of $X^*$ such that $s(\mu)\cdot v=v$, where $s(\mu)\in G$ is the source of the path $\mu$. Thus $G_g^k$ is in one-to-one correspondence with the set of stationary paths in the Moore diagram starting at $g$. 

For $v\in F_g^k$, we have $g\cdot v=v$ and $g|_v=e$, so the last vertex on $\mu_v$ is $e$. Thus the elements of $F_g^k$ are in one-to-one correspondence with the stationary paths starting at $g$ and ending at $e$.

Thus we can compute $|G_g^k|$ and $|F_g^k|$ by counting stationary paths in the Moore diagram. Notice that for a given $g$, we only need to draw the part of the Moore diagram which consists of the stationary edges reachable by stationary paths from $g$. For examples of such computations, see Examples~\ref{sampleBas} and~\ref{sampleGrig} below.

\subsection{The basilica group}

We now consider the self-similar action $(B,X)$ which defines the basilica group (see \S\ref{Basilica}). In Example \ref{Toebas}, we discussed KMS states on the Toeplitz system $(\TT(B,X),\sigma)$ at inverse temperatures greater than the critical value $\beta_c=\log|X|=\log2$. At the critical inverse temperature, Proposition~\ref{factorthruCP} implies that every KMS$_{\log 2}$ state factors through $(\OO(B,X),\sigma)$, and we have:

\begin{prop}\label{basilicatr}
The system $(\OO(B,X),\sigma)$ has a unique KMS$_{\log 2}$ state, which is given on the nucleus $\NN = \{ e,a,b,a^{-1},b^{-1},ab^{-1},ba^{-1}\}$ by
\[ 
\phi(u_g) = \begin{cases}
1 & \text{ for } g=e \\
\frac{1}{2} & \text{ for } g=b,b^{-1} \\
0 & \text{ for } g = a,a^{-1},ab^{-1},ba^{-1}. 
\end{cases}
\]
\end{prop}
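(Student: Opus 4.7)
The plan is to invoke Theorem~\ref{KMSatcritical} for existence and uniqueness, and then Corollary~\ref{nucenough} to pin down the values on the nucleus.

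For existence and uniqueness, I would first observe that Proposition~\ref{Bascontract} identifies $(B,X)$ as contracting with finite nucleus $\NN$. The contracting property implies the hypothesis of Theorem~\ref{KMSatcritical}\,\eqref{uniqueKMS}: for each $g\in B\setminus\{e\}$ there is $n$ with $\{g|_v:|v|\geq n\}\subseteq\NN$, and since only finitely many $v$ have length less than $n$, the set $\{g|_v:v\in X^*\}$ is a finite union $\NN\cup\{g|_v:|v|<n\}$, hence finite. Parts \eqref{existcritKMS} and \eqref{uniqueKMS} of Theorem~\ref{KMSatcritical} therefore produce a unique KMS$_{\log 2}$ state $\phi$ on $\OO(B,X)$, with $\phi(u_g)=c_g$ for $g\neq e$ and $\phi(u_e)=1=:c_e$.

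For the concrete values, I would apply Corollary~\ref{nucenough} with $k=1$ to each $g\in\NN$; this is legitimate because the nucleus is closed under restriction. The problem collapses to the finite linear recursion
\[
c_g \;=\; \frac{1}{2}\sum_{\{x\in X\,:\,g\cdot x=x\}} c_{g|_x}, \qquad g\in\NN.
\]
The first-letter action and restrictions can be read directly off \eqref{basilica_action}, or equivalently from Figure~\ref{fig:Basilica}. Both $a$ and $a^{-1}$ interchange $x$ and $y$ (empty sum), so $c_a=c_{a^{-1}}=0$. Both $b$ and $b^{-1}$ fix $x$ and $y$, with $b|_x=a$, $b|_y=e$, $b^{-1}|_x=a^{-1}$, $b^{-1}|_y=e$, giving $c_b=\frac{1}{2}(c_a+c_e)=\frac{1}{2}$ and $c_{b^{-1}}=\frac{1}{2}$. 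For the composite $ab^{-1}$, the identities $b^{-1}\cdot x=x$ and $b^{-1}\cdot y=y$ combined with $a\cdot x=y$ and $a\cdot y=x$ yield $ab^{-1}\cdot x=y$ and $ab^{-1}\cdot y=x$, so the sum is again empty and $c_{ab^{-1}}=0$; the symmetric calculation gives $c_{ba^{-1}}=0$.

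The only place requiring any care is the bookkeeping for $ab^{-1}$ and $ba^{-1}$, where the first-letter action must be obtained from the composition rule rather than from a generator table. Beyond that, every value is an immediate substitution into the recursion, so the main obstacle is negligible once the finite-state hypothesis has been verified.
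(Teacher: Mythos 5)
Your proposal is correct, and the existence/uniqueness part coincides with the paper's (both rest on Proposition~\ref{Bascontract} plus Theorem~\ref{KMSatcritical}; your explicit check that contracting implies the finite-state hypothesis is the same easy observation the paper makes when stating the theorem). Where you genuinely diverge is in computing the values $c_g$ on the nucleus. The paper works directly from the definition $c_g=\lim_k |X|^{-k}|F_g^k|$ and counts stationary paths in the Moore diagram of Figure~\ref{fig:Basilica}: no stationary path from $a,a^{-1},ab^{-1},ba^{-1}$ reaches $e$, so those $F_g^k$ are empty, while from $b$ and $b^{-1}$ there are $2^{k-1}$ such paths, giving $c_g=\tfrac12$. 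You instead apply Corollary~\ref{nucenough} with $k=1$ (legitimate, since the nucleus is closed under restriction) and solve the resulting finite recursion $c_g=\tfrac12\sum_{g\cdot x=x}c_{g|_x}$, which has no circular dependencies here and so is resolved by direct substitution from $c_e=1$. Your route is cleaner in that it avoids path counting and the limit entirely; the paper's route has the advantage of working straight from the definition of $c_g$ (Corollary~\ref{nucenough} is only stated after the uniqueness theorem, and the paper instead uses it in the opposite direction, to note that the nucleus values determine the whole state) and of setting up the path-counting machinery that is then reused for elements outside the nucleus, as in Example~\ref{sampleBas}. All of your individual computations of first-letter actions and restrictions check out against \eqref{basilica_action} and Figure~\ref{fig:Basilica}.
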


\begin{proof}
We know from Proposition~\ref{Bascontract} that $(B,X)$ is contracting with nucleus $\NN$, so existence and uniqueness of $\phi$ follow from Theorem~\ref{KMSatcritical}. In Figure~\ref{fig:Basilica}, there are no stationary paths from $g\in \{a,a^{-1},ab^{-1},ba^{-1}\}$ to $e$, so for such $g$ we have $F_g^k=\varnothing$ for all $k$ and $\phi(u_g)=c_g=0$. For $g \in \{b,b^{-1}\}$, the only stationary paths go straight  from $g$ to $e$, and there are $2^{k-1}$ of them; thus $|X|^{-k}|F_g^k|=2^{-k}2^{k-1}=\frac{1}{2}$, and $\phi(u_g)=c_g=\frac{1}{2}$.
\end{proof}

Corollary~\ref{nucenough} implies that these computations of the KMS$_{\log 2}$ state on the nucleus suffice to determine the state. If we want to know other values of the state, we can use the strategy outlined in \S\ref{sec:Mooreuseful}. We illustrate this strategy by calculating $\phi(s_v u_{aba}s_w^*)$.

\begin{example}\label{sampleBas} 
By Theorem~\ref{KMSatcritical}\,\eqref{existcritKMS}, $\phi(s_vu_{aba}s_w^*)$ is either $0$ or $2^{-|v|}\phi(u_{aba})$. To compute $\phi(u_{aba})$, we draw the portion of the Moore diagram emanating from $aba$ with a view to finding $F_{aba}^k$.  From the defining relations, we  calculate
\begin{alignat*}{2}
aba\cdot x&= x&\qquad  (aba)|_x&= (ab)|_{a\cdot x} a|_x = (ab)|_y b = a|_y eb = b \\
aba\cdot y&= y & \qquad (aba)|_y&= (ab)|_{a\cdot y} a|_x = (ab)|_x e = a|_{b\cdot x} a = a|_{x} a =ba. 
\end{alignat*}
We then note that $ba\cdot x=b\cdot y=y$, which forces $ba\cdot y=x$, and hence there are no stationary paths going from $aba$ to $e$ through $ba$. Now we can delete any edges in the Moore diagram for the nucleus with unequal labels, and find that all the stationary paths from $aba$ to $e$ lie in  the diagram
\begin{center}
\begin{tikzpicture}
\node at (-2,1) {$\scriptstyle e$};
\node[vertex] (vertexe) at (-2,1)   {$\,$}
	edge [->,>=latex,out=170,in=130,loop,thick] node[auto,pos=0.5]{$\scriptstyle (y,y)$} (vertexe)
	edge [->,>=latex,out=190,in=230,loop,thick] node[auto,swap,pos=0.5]{$\scriptstyle (x,x)$} (vertexe);
\node at (0,0) {$\scriptstyle b$};
\node[vertex] (vertexb) at (0,0)   {$\,$}	
	edge [->,>=latex,out=155,in=335,thick] node[auto,xshift=0.2cm,pos=0.45]{$\scriptstyle(y,y)$} (vertexe);
\node at (0,2) {$\scriptstyle a$};
\node[vertex] (vertexa) at (0,2)  {$\,$}
	edge [<-,>=latex,out=270,in=90,thick] node[auto,xshift=-0.1cm,pos=0.45]{$\scriptstyle(x,x)$}(vertexb);
\node at (2,2) {$\scriptstyle ba$};
\node[vertex] (vertexc) at (2,2)   {$\,$};
\node at (2,0) {$\scriptstyle aba$};
\node[vertex] (vertexd) at (2,0)  {$\,\,\,\,$}
	edge [->,>=latex,out=90,in=270,thick] node[auto,swap,pos=0.5]{$\scriptstyle(y,y)$}(vertexc)
	edge [->,>=latex,out=180,in=0,thick] node[auto,xshift=0.15cm,pos=0.6]{$\scriptstyle(x,x)$} (vertexb);
\end{tikzpicture}
\end{center}
We deduce that $|F_{aba}^k|=2^{k-2}$ for $k\geq 2$, and hence 
\[
\phi(u_{aba})=c_{aba}=\lim_{k\to \infty} 2^{-k}|F_{aba}^k|={\textstyle{\frac{1}{4}}}. 
\]
Thus Theorem~\ref{KMSatcritical}\,\eqref{existcritKMS} gives
\begin{equation*}
\phi(s_vu_{aba}s_w^*)=\begin{cases}
2^{-|v|-2}&\text{if $v=w$}\\
0&\text{otherwise.}
\end{cases}
\end{equation*}
\end{example}

\subsection{The Grigorchuk group}\label{Grigorchuk_state}

\begin{prop}\label{Grigonnuc}
Let $(G,X)$ be the self-similar action of the Grigorchuk group from \S\ref{Grigorchuk}. Then $(\OO(G,X),\sigma)$ has a unique KMS$_{\log 2}$ state $\phi$ which is given on the nucleus $\NN = \{e,a,b,c,d\}$ by
\[ 
\phi(u_g) = \begin{cases}
1 & \text{ for } g=e\\
0 & \text{ for } g = a \\
1/7 & \text{ for } g=b \\
2/7 & \text{ for } g=c \\
4/7 & \text{ for } g=d.
\end{cases}
\]
\end{prop}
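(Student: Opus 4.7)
The plan is to apply Theorem~\ref{KMSatcritical} directly. By Proposition~\ref{grig:nuc}, $(G,X)$ is contracting with finite nucleus $\NN=\{e,a,b,c,d\}$, so for every $g\in G$ there is an $n$ with $g|_v\in\NN$ for all $|v|\geq n$; together with the finiteness of $\{g|_v:|v|<n\}$, this verifies the finite-state hypothesis of Theorem~\ref{KMSatcritical}\,\eqref{uniqueKMS}. Hence $(\OO(G,X),\sigma)$ admits a unique KMS state $\phi$, which by Theorem~\ref{KMSatcritical}\,\eqref{idinvtemp} has inverse temperature $\log 2$ and by Theorem~\ref{KMSatcritical}\,\eqref{existcritKMS} satisfies $\phi(u_g)=c_g$ on unitary generators, with the convention $c_e:=1$.

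It then remains to identify $c_g$ for each $g\in\NN$. Trivially $c_e=1$. For $g=a$, the defining relations \eqref{grig_action} show that $a$ swaps the first letter of every nonempty word, so $a\cdot v\neq v$ for every $v\in X^k$ with $k\geq 1$; since also $a\neq e$ rules out $v=\varnothing$, we obtain $F_a^k=\varnothing$ for all $k$, whence $c_a=0$.

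For $g\in\{b,c,d\}$, I would invoke Corollary~\ref{nucenough} with $k=1$. These three generators each fix both $x$ and $y$, so the sums over fixed points run over all of $X$. Reading off the restrictions $b|_x=a$, $b|_y=c$, $c|_x=a$, $c|_y=d$, $d|_x=e$, $d|_y=b$ from \eqref{grig_action} yields the linear system
\begin{align*}
c_b &= \tfrac{1}{2}(c_a+c_c) = \tfrac{1}{2}c_c,\\
c_c &= \tfrac{1}{2}(c_a+c_d) = \tfrac{1}{2}c_d,\\
c_d &= \tfrac{1}{2}(c_e+c_b) = \tfrac{1}{2}+\tfrac{1}{2}c_b.
\end{align*}
Back-substitution gives $c_b=\tfrac{1}{4}c_d=\tfrac{1}{8}+\tfrac{1}{8}c_b$, whence $c_b=\tfrac{1}{7}$, and then $c_d=\tfrac{4}{7}$ and $c_c=\tfrac{2}{7}$, matching the claimed values.

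The only conceptual point is verifying that contractivity provides the finite-state condition required for uniqueness; beyond that, the argument is purely bookkeeping with Corollary~\ref{nucenough}, and there is no serious obstacle. An alternative and more direct route would be to count stationary paths in the Moore diagram of Figure~\ref{fig:Grig} to evaluate $\lim_{k\to\infty}2^{-k}|F_g^k|$ by hand for each $g$, but the self-similar recursion from Corollary~\ref{nucenough} is considerably tidier.
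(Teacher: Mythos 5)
Your proposal is correct, and the existence/uniqueness part coincides with the paper's: both rest on Proposition~\ref{grig:nuc} plus Theorem~\ref{KMSatcritical} (the paper states explicitly that contracting implies the finite-state hypothesis, which is the observation you spell out). Where you genuinely diverge is in computing the values $c_g$. The paper evaluates $c_g=\lim_k 2^{-k}|F_g^k|$ directly by counting stationary paths in the Moore diagram: it finds $|F_d^k|=2^{k-1}+2^{k-4}+\cdots$ from the length-three cycle $d\to b\to c\to d$, sums the geometric series to get a closed form for $|F_d^k|$, and then reads off $|F_c^k|=|F_d^{k-1}|$ and $|F_b^k|=|F_d^{k-2}|$. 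You instead feed the restrictions $b|_x=a$, $b|_y=c$, $c|_x=a$, $c|_y=d$, $d|_x=e$, $d|_y=b$ into the self-similarity recursion of Corollary~\ref{nucenough} with $k=1$ and solve the resulting $3\times 3$ linear system; this is not circular, since the corollary is established for any contracting action once Theorem~\ref{KMSatcritical} gives the state, and your independent verification that $F_a^k=\varnothing$ (hence $c_a=0$) pins the system down. Your route is tidier for extracting the values $1/7$, $2/7$, $4/7$; the paper's path-counting buys the explicit formula \eqref{formFc} for $|F_c^k|$, which it then reuses in Example~\ref{sampleGrig} to evaluate $\phi(u_{cadac})$, so the extra work is not wasted there.
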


\begin{proof}
We know from Proposition~\ref{grig:nuc} that the Grigorchuk action is contracting with nucleus $\NN$, and $|X|=2$, so Theorem~\ref{KMSatcritical}\,\eqref{uniqueKMS} implies that there is a unique KMS$_{\log 2}$ state $\phi$. A look at the Moore diagram shows that there are no stationary paths starting at $a$, and hence there are no stationary paths going to $e$ through $a$. Thus it suffices to count paths to $e$ in the following diagram. 
\begin{center}
\begin{tikzpicture}
\node at (0,0) {$\scriptstyle e$};
\node[vertex] (vertexe) at (0,0)   {$\,$}
	edge [->,>=latex,out=280,in=340,loop,thick] node[auto,swap,pos=0.5]{$\scriptstyle (y,y)$} (vertexe)
	edge [->,>=latex,out=80,in=20,loop,thick] node[auto,pos=0.5]{$\scriptstyle (x,x)$} (vertexe);
\node at (-3.5,-1) {$\scriptstyle c$};
\node[vertex] (vertexb) at (-3.5,-1)   {$\,$};
\node at (-3.5,1) {$\scriptstyle b$};
\node[vertex] (vertexc) at (-3.5,1)   {$\,$}
	edge [->,>=latex,out=270,in=90,thick] node[auto,swap,pos=0.5]{$\scriptstyle(y,y)$} (vertexb);
\node at (-1.7679,0) {$\scriptstyle d$};
\node[vertex] (vertexd) at (-1.7679,0)  {$\,$}
	edge [->,>=latex,out=0,in=180,thick] node[auto,pos=0.5]{$\scriptstyle(x,x)$} (vertexe)
	edge [->,>=latex,out=150,in=330,thick] node[auto,swap,pos=0.5]{$\scriptstyle(y,y)$}(vertexc)
	edge [<-,>=latex,out=210,in=30,thick] node[auto,xshift=0.15cm,pos=0.6]{$\scriptstyle(y,y)$} (vertexb);
\end{tikzpicture}
\end{center}
In particular, $F^k_a=\varnothing$ for all $k$, and $\phi(u_a)=c_a=0$. From $d$, there are $2^{k-1}$ paths of length $k$ which go straight to $e$, $2^{k-4}$ which first go round the cycle once, and
\[
|F^k_d|=2^{k-1}+2^{k-4}+\cdots+2^{k-(3j+1)}\quad\text{where $3j+1\leq k\leq 3j+3$.}
\]
Summing the geometric series gives
\[
|F_d^k|=2^{k-(3j+1)}\Big(\frac{(2^{3})^{(j+1)}-1}{2^3-1}\Big)=\frac{2^{k+2}-2^{k-(3j+1)}}{7}\quad\text{where $3j+1\leq k\leq 3j+3$.}
\]
Thus
\[
|X|^{-k}\,|F_d^k|=2^{-k}|F_d^k|=\frac{4-2^{-(3j+1)}}{7}\quad\text{where $3j+1\leq k\leq 3j+3$,}
\]
and $\phi(u_d)=c_d=\lim_{k\to\infty}|X|^{-k}\,|F_d^k|=\frac{4}{7}$. There are similar formulas for $c$ and $b$:
\begin{align}\label{formFc}
|F_c^k|&=|F_d^{k-1}|=\frac{2^{k+1}-2^{k-(3j+2)}}{7}\quad\text{where $3j+2\leq k\leq 3j+4$, and}\\
|F_b^k|&=|F_d^{k-2}|=\frac{2^{k}-2^{k-(3j+3)}}{7}\quad\text{where $3j+3\leq k\leq 3j+5$,}\notag
\end{align}
and these formulas imply that $\phi(u_c)=c_c=\frac{2}{7}$ and $\phi(u_b)=c_b=\frac{1}{7}$.
\end{proof}

\begin{example}\label{sampleGrig}
We calculate the value of the state $\phi$ in Proposition~\ref{Grigonnuc} on the generator $u_{cadac}$. We need the part of the Moore diagram emanating from $cadac$ with stationary edges. We calculate, using either the defining relations \eqref{grig_action} or the same information encoded in the Moore diagram of Figure~\ref{fig:Grig}:
\begin{alignat*}{2}
cadac\cdot x &= x &\qquad (cadac)|_x&= (cada)|_{x} a = (cad)|_y a = (ca)|_y ba = c|_x ba = aba \\
cadac\cdot y&= y &\qquad (cadac)|_y&= (cada)|_{y} d = (cad)|_x d = (ca)|_{x} d = c|_{y} d =d^2 = e \\
aba \cdot x&= x &\qquad (aba)|_x&=(ab)|_y=a|_{y} c = c \\
aba \cdot y&= y &\qquad (aba)|_y&=(ab)|_{x}  = a|_x a = a.
\end{alignat*}
This gets us into the nucleus. Now adding the stationary edges from the Moore diagram of the nucleus gives a diagram which contains all the stationary paths from $cadac$ to $e$:
\begin{center}
\begin{tikzpicture}
\node at (0,0) {$\scriptstyle e$};
\node[vertex] (vertexe) at (0,0)   {$\,$}
	edge [->,>=latex,out=210,in=150,loop,thick] node[auto,pos=0.5]{$\scriptstyle (y,y)$} (vertexe)
	edge [->,>=latex,out=240,in=300,loop,thick] node[auto,swap,pos=0.5]{$\scriptstyle (x,x)$} (vertexe);
\node at (1.5,1.5) {$\scriptstyle b$};
\node[vertex] (vertexb) at (1.5,1.5)   {$\,$};
\node at (0,3) {$\scriptstyle a$};
\node[vertex] (vertexa) at (0,3)  {$\,$}
		edge [<-,>=latex,out=315,in=135,thick] node[auto,xshift=-0.25cm,pos=0.6]{$\scriptstyle(x,x)$} (vertexb);
\node at (3,3) {$\scriptstyle c$};
\node[vertex] (vertexc) at (3,3)   {$\,$}
	edge [<-,>=latex,out=225,in=45,thick] node[auto,swap,xshift=0.25cm,pos=0.6]{$\scriptstyle(y,y)$} (vertexb)
	edge [->,>=latex,out=180,in=0,thick] node[auto,swap,pos=0.5]{$\scriptstyle(x,x)$} (vertexa);
\node at (3,0) {$\scriptstyle d$};
\node[vertex] (vertexd) at (3,0)  {$\,$}
	edge [->,>=latex,out=180,in=0,thick] node[auto,pos=0.5]{$\scriptstyle(x,x)$} (vertexe)
	edge [<-,>=latex,out=90,in=270,thick] node[auto,swap,pos=0.5]{$\scriptstyle(y,y)$}(vertexc)
	edge [->,>=latex,out=135,in=315,thick] node[auto,xshift=0.15cm,pos=0.6]{$\scriptstyle(y,y)$} (vertexb);
\node at (1.5,4) {$\scriptstyle aba$};
\node[vertex] (vertexaba) at (1.5,4)  {$\,\,\,\,$}
	edge [->,>=latex,out=215,in=35,thick] node[auto,swap,xshift=0.3cm,pos=0.7]{$\scriptstyle(y,y)$}(vertexa)
	edge [->,>=latex,out=325,in=145,thick] node[auto,xshift=-0.3cm,pos=0.7]{$\scriptstyle(x,x)$}(vertexc);
\node at (-1,4) {$\scriptstyle cadac$};
\node[vertex] (vertexdcadac) at (-1,4)  {$\quad\,\,\,$}
	edge [->,>=latex,out=200,in=130,thick] node[auto,swap,pos=0.5,xshift=0.0cm]{$\scriptstyle(y,y)$} (vertexe)
	edge [->,>=latex,out=0,in=180,thick] node[auto,pos=0.5]{$\scriptstyle(x,x)$}(vertexaba);
\end{tikzpicture}
\end{center}
We need to count the paths from $cadac$ to $e$ in this diagram. They go either straight to $e$, or straight to $c$. Using the formula in \eqref{formFc} for $F^l_c$, we have
\[
|F_{cadac}^k|=  2^{k-1}+ |F_c^{k-2}|=
 2^{k-1}+\frac{2^{k-1}-2^{k-(3j+4)}}{7}\quad\text{where $3j+4\leq k\leq 3j+6$,}
\]
and hence
\[
|X|^{-k}|F_{cadac}^k|=2^{-1}+\frac{2^{-1}-2^{-(3j+4)}}{7}\quad\text{where $3j+4\leq k\leq 3j+6$.}
\]
Thus $\phi(u_{cadac})=c_{cadac}=\lim_{k\to \infty}|X|^{-k}|F_{cadac}^k|=\frac{4}{7}$.
\end{example}


\end{document}